\documentclass{article}
\usepackage[numbers,sort&compress]{natbib}
\let\parencite\citep
\let\textcite\citet

\usepackage[margin=1in]{geometry}
\usepackage{graphicx}
\usepackage{amsmath, amssymb, amsthm, mathtools}
 \usepackage{enumitem} 
\usepackage{algorithm}
\usepackage{algpseudocode}
\usepackage{float}
\usepackage[section]{placeins}
\usepackage[title,titletoc]{appendix}
\usepackage{booktabs}
\usepackage{tabularx}
\usepackage{threeparttable}
\usepackage{caption}
\usepackage{authblk}
\usepackage{xcolor}
\usepackage{array}
\usepackage{tikz}
\usepackage{tocloft}
\usepackage{titlesec}
\usepackage{subcaption}
\usepackage{siunitx}
\newcommand{\sciRange}[3]{%
  \ensuremath{#1\text{--}#2 \times 10^{#3}}%
}

\newcommand{\sci}[2]{%
  \ensuremath{#1 \times 10^{#2}}%
}

\newtheorem{definition}{Definition}
\newtheorem{theorem}{Theorem}
\newtheorem{remark}{Remark}

\newtheorem{proposition}{Proposition}

\newtheorem{example}{Example}
\newtheorem{assumption}{Assumption}

\newcommand{\st}[1]{#1}
\DeclareMathOperator*{\argmin}{arg\,min}

\title{Beyond Invariant Dictionary: Data-Driven Koopman Spectral Recovery with Filtered Extended Dynamic Mode Decomposition}
\author[1]{Siji Chen}
\author[2,3]{Igor Mezi{\'c}}
\author[3]{Sui Tang}
\affil[1]{Program in Applied \& Computational Mathematics, Princeton University
(\texttt{siji@princeton.edu})}
\affil[2]{Department of Mechanical Engineering, University of California, Santa Barbara
(\texttt{mezic@ucsb.edu})}
\affil[3]{Department of Mathematics, University of California, Santa Barbara
(\texttt{suitang@ucsb.edu})}
\date{}

\usepackage{hyperref}
\hypersetup{
    colorlinks=true,
    linkcolor=blue,
    citecolor=blue,
    filecolor=magenta,
    urlcolor=cyan
}

\begin{document}

\maketitle

\begin{abstract}
The Koopman operator provides a linear framework for analyzing nonlinear
dynamical systems through their spectral properties. Extended Dynamic Mode
Decomposition (EDMD) approximates this operator from snapshot data, but
non-invariant dictionaries can introduce spurious eigenvalues.

We introduce the Projected Koopman Operator Approximation framework for
constructing Filtered EDMD operators. The framework projects the Koopman action
onto admissible dictionary subspaces that need not themselves be invariant,
while exactly preserving every represented Koopman eigenpair with nonzero
eigenvalue. For range selection, the forward--intersection chain provides a
canonical hierarchy of compatible subspaces. When the Koopman action is
injective on the dictionary space, this hierarchy connects the full dictionary
to its maximal invariant core while retaining useful intermediate models.

For projection geometry, we analyze two canonical choices: a
coordinate-orthogonal projector defined by the fixed dictionary coordinates,
which requires no function-space Gram-matrix estimate but is basis-dependent,
and a function-space orthogonal projector, which yields population EDMD at the
unfiltered level. We characterize when these choices coincide and clarify
their relations to EDMD and existing subspace-selection methods that apply
EDMD on a reduced dictionary space. Algorithmically, we develop SVD-based
procedures that construct the sampled forward--intersection  chain and implement the coordinate
projector. Under independent noiseless sampling and exact-rank
identifiability, we prove that the resulting empirical operators converge
almost surely to their population counterparts.

Experiments on a Kronecker flow, a polynomial system, and the Van~der~Pol
oscillator demonstrate reduced spectral pollution on informative dictionaries.
For Van~der~Pol, an intermediate finite-tolerance level identifies candidates
aligned with the local equilibrium reference lattice, even though the
dictionary is not constructed from the corresponding exact eigenfunctions.
The projection geometry further separates the two objects on the same certified
subspace: the measure-free coordinate projector certifies the equilibrium spectrum
independently of the sampling measure, while the $L^2(\mu)$ projector approximates the
limit-cycle spectrum.
\end{abstract}

\section{Introduction}\label{sec:intro}

Analyzing nonlinear dynamical systems is a central yet challenging
task in science and engineering, as such systems rarely admit
closed-form solutions.
The Koopman operator~\cite{koopman1931hamiltonian,koopman1932dynamical}
provides a systematic route to their analysis: acting on observables
via composition with the flow, it replaces the study of nonlinear
state-space trajectories with spectral analysis of a linear, albeit
infinite-dimensional, operator~\cite{BudisicMohrMezic2012}.
Its spectral decomposition, the Koopman Mode Decomposition
(KMD)~\cite{Mezic2005}, resolves system
behavior into intrinsic oscillation frequencies, growth and decay
rates, and coordinate representations such as eigenfunctions,
providing a coordinate-free framework for analysis, prediction, and
control of nonlinear dynamical systems~\cite{RowleyMezicEtAl2009,Mezic2013}.

Since the Koopman operator~${K}$ acts on an
infinite-dimensional function space, any computational approach must
approximate it on a finite-dimensional subspace.
Extended Dynamic Mode Decomposition
(EDMD)~\cite{williams2015data} is one of the most widely used
data-driven algorithms for this purpose: it
projects~${K}$ onto the span~$\mathcal{G}$ of a
user-chosen dictionary of observables
$\mathbf{g} = (g_1,\dots,g_n)^\top$ and estimates the corresponding
matrix from snapshot pairs~\cite{williams2015data,KordaMezic2018}.
If $\mathcal{G}$ is Koopman-invariant, the projection is exact in
the sense that every eigenvalue of the finite-dimensional EDMD
matrix is a genuine Koopman eigenvalue and the corresponding
eigenfunction is faithfully represented in~$\mathcal{G}$.
In practice, however, dictionaries are selected
heuristically, typically monomials, radial basis functions, or
delay coordinates, and there is generally no guarantee that their
span is even approximately invariant.
For such generic dictionaries the projection introduces spurious
eigenvalues, \emph{spectral pollution}, that carry no dynamical
meaning yet are indistinguishable, within the EDMD output alone,
from genuine Koopman
eigenvalues~\cite{colbrook2024rigorous,colbrook2024multiverse}.

A common strategy for mitigating spectral pollution is to identify,
exactly or approximately, Koopman-invariant subspaces contained
in~$\mathcal{G}$.
Symmetric Subspace Decomposition (SSD)~\cite{haseli2021learning}
is a representative algebraic approach: it iteratively intersects
forward- and backward-time EDMD range spaces, provably converging
to the maximal Koopman-invariant subspace
inside~$\mathcal{G}$.
Its tunable variant T-SSD~\cite{haseli2023generalizing} relaxes exact
invariance by pruning directions to achieve a prescribed
accuracy--expressiveness trade-off.
Recursive Forward--Backward EDMD
(RFB-EDMD)~\cite{RFB-EDMD} builds a nested hierarchy by
recursively removing directions with large forward--backward
inconsistency, with finite-iteration convergence guarantees for
the subspace sequence.
Koopman--Schur decomposition~\cite{drmac2024koopmanschur}
replaces potentially ill-conditioned eigenvector-based modal
decompositions by orthonormal Schur bases associated with ordered
flags of Koopman-invariant subspaces.
Principal angle decomposition~\cite{conradie2026trustworthy}
quantifies the failure of $\mathcal{G}$ to be Koopman-invariant
by principal angles between $\mathcal{G}$ and its Koopman image
$K\mathcal{G}$.
{
Across these approaches, exact or approximate invariance is the central
organizing principle. Exact subspace methods may become spectrally
uninformative when the maximal invariant core is trivial, whereas
tolerance-based variants trade invariance for a larger retained space. This
raises a broader question for mitigating spectral pollution: must the selected
range itself be invariant to preserve the Koopman spectral content already
represented in the dictionary?
}

{
To answer this question, we introduce the Projected Koopman Operator
Approximation (PKOA) framework, which separates range selection from
projection geometry. An admissible projection has a range
$\mathcal S\subseteq\mathcal G$ that contains the invariant content represented
in $\mathcal G$. Neither $\mathcal S$ need be invariant nor need the projection be orthogonal
in the function-space inner product associated with the data
measure. Nevertheless, every represented Koopman eigenpair with nonzero
eigenvalue is preserved (Proposition~\ref{prop:AP-preserve}).

Filtered EDMD realizes this principle by projecting the Koopman action into a
selected compatible range before computing its spectrum. Thus,
\emph{filtering} occurs during operator construction, rather than by screening
computed eigenvalues afterward. A canonical multilevel choice of compatible
ranges is the forward--intersection chain
}
\begin{equation}\label{eq:intro-chain}
  \mathcal S_0:=\mathcal G,\qquad
  \mathcal S_{j+1}:=\mathcal S_j\cap K\mathcal S_j .
\end{equation}
{
Here $\mathcal S_0=\mathcal G$ is the unfiltered endpoint, while
$\mathcal S_1=\mathcal G\cap K\mathcal G$ is the first filtered range,
retaining observables represented both in the dictionary and as Koopman images
of dictionary observables. Because the spaces are nested, increasing the level
imposes progressively stronger image-compatibility requirements and produces
a hierarchy of increasingly selective filters. Every filtered level contains
the maximal invariant core $\mathcal V_{\max}^{\mathcal G}$, so the
preservation guarantee holds throughout the hierarchy even though the
intermediate spaces need not be invariant and may be substantially larger than
the core. For any selected level $j\geq1$, projecting the Koopman action onto
$\mathcal S_j$ defines the corresponding Filtered EDMD operator. If
$K|_{\mathcal G}$ is injective, the chain stabilizes after finitely many steps
at $\mathcal V_{\max}^{\mathcal G}$.
}

\paragraph{Contributions.}
The main results of this paper are as follows.
\begin{enumerate}
\item[(i)]
{
  \textbf{Operator-theoretic framework and canonical projectors.}
  We formalize the PKOA admissibility conditions and prove the resulting
  preservation guarantee for represented nonzero Koopman eigenpairs. For a
  fixed admissible range, we analyze two canonical choices: orthogonal
  projection in the fixed dictionary-coordinate geometry and orthogonal
  projection in the function-space geometry underlying population EDMD. We
  characterize when these choices coincide and clarify their population-level
  relations to EDMD and the reduced EDMD step used by SSD and T-SSD
  (Section~\ref{sec:operator-framework}).
}

\item[(ii)]
{
\textbf{Data-driven coordinate projection.}
We design SVD-based algorithms that construct the sampled forward--intersection
chain and its coordinate-orthogonal projector at any selected level directly
from snapshot data
(Algorithms~\ref{alg:iter-intersect} and~\ref{alg:filtered-edmd}). The
coordinate-filtering step uses Euclidean geometry on coefficient vectors and
therefore requires no estimate of a function-space Gram matrix, although it
depends on the chosen dictionary coordinates. Algorithm~\ref{alg:filtered-edmd}
recovers the maximal coefficient subspace compatible with the selected intersection
(Proposition~\ref{prop:maximal-projection}), and a least-squares step assembles
the filtered matrix.
  Under independent noiseless sampling and exact-rank identifiability, we
  prove almost-sure convergence of the empirical coordinate-projected
  Filtered EDMD operator to its population counterpart
  (Theorem~\ref{thm:convergence}) and extend the result to the full chain
  (Remark~\ref{rmk:full-chain-convergence}).
}

\item[(iii)] {{
\textbf{Numerical validation.}
Comparisons with EDMD and existing subspace- and residual-based methods on
three systems demonstrate the benefit of retaining intermediate filtered
levels (Section~\ref{sec:experiments}). On the Kronecker and polynomial
examples, these levels remove detected spurious eigenvalues while retaining
the represented spectral content. For Van~der~Pol, where the terminal
invariant model is uninformative, an intermediate finite-tolerance level
identifies candidates aligned with the local equilibrium reference lattice.
The projection geometry further distinguishes equilibrium from limit-cycle content
on the same certified subspace: the measure-free coordinate projector certifies the
equilibrium spectrum independently of the sampling measure, while the $L^2(\mu)$
projector approximates the limit-cycle spectrum (Section~\ref{sec:vdp-geometry}).}}
\end{enumerate}

\paragraph{Other related work.}
{
Residual DMD
(ResDMD)~\cite{colbrook2024rigorous,colbrook2023resdmd_fluid}
takes a complementary output-certification approach. In the eigenpair-screening
implementation used here, it retains EDMD candidates whose computable residuals
meet a prescribed threshold. Filtered EDMD instead changes the projection
before the eigendecomposition.
}
Beyond eigenvalue screening and subspace extraction, several other lines of work address spectral pollution.
For measure-preserving systems, moment-based spectral methods~\cite{korda2020data}
establish convergence guarantees for Koopman spectral measures from trajectory
moments, while delay-coordinate methods~\cite{das2019delay} approximate Koopman eigenvalues and eigenfunctions with the same guarantees.
mpEDMD~\cite{colbrook2023mpedmd} enforces the unitary structure directly at the matrix level, yielding pollution-free spectral convergence for measure-preserving dynamics. Rigged DMD~\cite{colbrook2025rigged} treats spectra beyond the
pure-point setting by passing to a rigged Hilbert space, where
discrete and continuous spectral components can be represented in
a unified spectral decomposition.
On the dictionary-design side, analytic
EDMD~\cite{mauroy2024analytic} exploits RKHS structure near
hyperbolic equilibria to avoid spectral pollution for analytically
conjugate dynamics, kernel EDMD~\cite{KernelEDMD} works in
reproducing kernel Hilbert spaces with quantitative error
bounds~\cite{philipp2025kernel_edmd,kohne2025kernel_linf}, and
neural approaches~\cite{DeepKoopman,OttoRowley2019} learn dictionaries end-to-end.
{These methods enrich the dictionary or constrain the approximation, whereas
PKOA changes the projection for a fixed dictionary.}

\paragraph{Outline.}
Section~\ref{sec:prelim}  fixes notation and collects preliminaries on Koopman operator
theory.
Section~\ref{sec:operator-framework} develops the operator-theoretic
framework.
Section~\ref{sec:algorithms} presents the algorithms and convergence
theory.
Section~\ref{sec:experiments} contains numerical experiments.
Section~\ref{sec:conclusion} concludes.

\section{Preliminaries}\label{sec:prelim}

\paragraph{Notation.}
Bold lowercase (e.g.\ $\mathbf{x},\mathbf{v}$) denote vectors and bold uppercase 
(e.g.\ $\mathbf{X},\mathbf{A}$) denote matrices.  
$\mathbf{I}_n$ is the $n\times n$ identity, $\mathbf{A}^*$ the conjugate transpose,
and $\mathbf{A}^\dagger$ the Moore--Penrose pseudoinverse.  
For functions $f,g$ on the state space $\mathcal M$ with measure $\mu$,
\[
\langle f,g\rangle_\mu := \int_\mathcal{M} f(\mathbf{v})\,\overline{g(\mathbf{v})}\,d\mu(\mathbf{v}),
\qquad
L^2(\mu):=\{ f:\mathcal M\to\mathbb C \;\mid\; \|f\|_\mu^2=\langle f,f\rangle_\mu<\infty\}.
\]
Calligraphic letters (e.g.\ $\mathcal{G},\mathcal{V}$) denote subspaces of functions.
When a finite collection $\mathbf g=(g_1,\dots,g_n)^\top$ is fixed, we also write 
$\langle \mathbf g(\cdot),\mathbf c\rangle$ for the finite-dimensional inner product 
$\sum_{j=1}^n \overline{c_j}\,g_j(\cdot)$; context will always distinguish this from the 
$L^2(\mu)$ inner product $\langle\cdot,\cdot\rangle_\mu$.

For a linear operator $T:\mathcal U\to\mathcal U$ and a subspace $\mathcal V\subseteq \mathcal U$,
$T|_{\mathcal V}:\mathcal V\to T(\mathcal V)$ denotes its restriction.  
For an operator $T$ (or matrix $\mathbf T$), $\sigma(T)$ (resp.\ $\sigma(\mathbf T)$) denotes its spectrum.
We take
\(\mathbb N=\{0,1,2,\ldots\}\).

\subsection{Koopman Operator and Eigenfunctions}

Consider a discrete-time dynamical system
\[
\mathbf{v}_{j+1}=F(\mathbf{v}_{j}), \qquad j=0,1,2,\dots,
\]
with flow map $F:\mathcal{M}\to\mathcal{M}$.  
The associated \emph{Koopman operator} $K:L^2(\mu)\to L^2(\mu)$ is defined by
\[
(Kg)(\mathbf v)=g(F(\mathbf v)).
\]
A nonzero $\psi\in L^2(\mu)$ is a \emph{Koopman eigenfunction} with eigenvalue $\lambda\in\mathbb C$ if
$K\psi = \lambda \psi$.
Eigenvalues encode temporal growth, decay, or oscillations, while eigenfunctions yield intrinsic coordinates for the dynamics.

\paragraph{Koopman Mode Decomposition.}
If an observable $g$ admits an expansion $g = \sum_i b_i\,\psi_i$ 
in Koopman eigenfunctions $\{\psi_i\}$ with eigenvalues $\{\lambda_i\}$, 
then the coefficients $\{b_i\}$ are the associated \emph{Koopman modes} and the time evolution takes the form
\[
g(\mathbf v_t)=K^t g(\mathbf v_0) = \sum_i b_i\,\lambda_i^t\,\psi_i(\mathbf v_0).
\]
This \emph{Koopman Mode Decomposition (KMD)} \parencite{Mezic2005} links spectral properties to temporal dynamics.

\subsection{Extended Dynamic Mode Decomposition}\label{sec:EDMD}

The Koopman operator acts on an infinite-dimensional function space, so practical computation requires a finite-dimensional surrogate built from data.  
Extended Dynamic Mode Decomposition (EDMD) \parencite{williams2015data} provides the standard 
construction: one selects a dictionary of observables $\mathbf g=(g_1,\dots,g_n)^\top$ spanning 
a subspace $\mathcal G := \operatorname{span}\{g_1,\dots,g_n\}\subset L^2(\mu)$, 
and approximates $K$ on $\mathcal G$ via least squares.

Given samples $\{\mathbf x_i\}_{i=1}^m\sim\mu$, define data matrices
\[
\mathbf X = \big[\mathbf g(\mathbf x_1)\;\cdots\;\mathbf g(\mathbf x_m)\big],\qquad
\mathbf Y = \big[K\mathbf g(\mathbf x_1)\;\cdots\;K\mathbf g(\mathbf x_m)\big].
\]
The EDMD matrix is the least-squares solution
\[
\mathbf A_{E,m} := \argmin_{\mathbf A\in\mathbb C^{n\times n}} \| \mathbf Y - \mathbf A \mathbf X\|_F^2
= \mathbf Y\mathbf X^\dagger.
\]
Let $A_{E,m}:\mathcal G\to\mathcal G$ denote the operator induced by this
matrix,
\[
A_{E,m}\langle\mathbf g,\mathbf c\rangle
:=\langle\mathbf A_{E,m}\mathbf g,\mathbf c\rangle.
\]
This has a variational interpretation: $\mathbf A_{E,m}$ minimizes
\[
\sum_{j=1}^n
\bigl\|P^{\hat\mu_m}_\mathcal G(Kg_j-Ag_j)\bigr\|_{\hat\mu_m}^2,
\]
where $Ag_j := \sum_{k=1}^n A_{jk}\,g_k\in\mathcal G$
(so $\mathbf A$ represents a linear operator $\mathcal G\to\mathcal G$),
$\hat\mu_m$ is the empirical measure, and $P^{\hat\mu_m}_\mathcal G$
is the $L^2(\hat\mu_m)$-orthogonal projection onto $\mathcal G$.
Under $\mu$-independence of the dictionary 
(i.e., no nontrivial linear combination of the $g_j$ vanishes $\mu$-a.e.), 
the associated operator converges as $m\to\infty$ to
\begin{equation}\label{eq:EDMD-pop}
A_E = P_\mathcal G^\mu \circ (K|_\mathcal G),
\end{equation}
the $L^2(\mu)$-orthogonal projection of $K$ onto $\mathcal G$ \parencite{KordaMezic2018}.
The pair $(\mathbf g, \mathbf A_{E,m})$ serves as a data-driven \emph{approximate linear representation} 
of the dynamics—a concept we formalize next.

\subsection{Finite-Dimensional Representations and Invariant Subspaces}\label{sec:fin-dim-rep}

The quality of EDMD hinges on the relationship between the dictionary $\mathcal G$ and 
the Koopman operator.  
In the ideal case, $\mathcal G$ is \emph{Koopman-invariant}: $K\mathcal G \subseteq \mathcal G$.  
When this holds, $K|_\mathcal G$ admits an \emph{exact} matrix representation.

\begin{definition}[Linear representation]\label{def:linear-rep}
If $K\mathcal G \subseteq \mathcal G$, there exists a unique $\mathbf A\in\mathbb C^{n\times n}$ such that
\[
K\mathbf g(\cdot) = \mathbf A\,\mathbf g(\cdot).
\]
We call $(\mathbf g, \mathbf A)$ a \emph{linear representation} of $K$ on $\mathcal G$.  
When $\mathcal G$ is not invariant, the data-driven pair $(\mathbf g, \mathbf A_{E,m})$ is called 
an \emph{approximate linear representation}.
\end{definition}

The spectral content of $\mathbf A$ directly encodes Koopman spectral objects. The following spectral correspondence is well established
\parencite{AMSNoticesKoopman}.
\begin{proposition}[Spectral correspondence]\label{prop:spectral-correspondence}
Let $(\mathbf g, \mathbf A)$ be a linear representation with $\mathbf A$ diagonalizable, 
having eigenvalues $\{\lambda_i\}$, left eigenvectors $\{\mathbf l_i\}$, 
and right eigenvectors $\{\mathbf r_i\}$, {chosen such that \(\langle \mathbf r_j,\mathbf l_i\rangle=\delta_{ij}.\)} Then:
\begin{enumerate}[label=\textnormal{(\roman*)}]
    \item The eigenvalues $\{\lambda_i\}$ of $\mathbf A$ are Koopman eigenvalues.
    \item The functions $\psi_i(\cdot):=\langle \mathbf g(\cdot),\mathbf l_i\rangle$ are Koopman eigenfunctions 
          with $K\psi_i = \lambda_i\psi_i$.
    \item The right eigenvectors $\{\mathbf r_i\}$ are Koopman modes, giving the expansion 
          $\mathbf g(\cdot)=\sum_i \mathbf r_i \psi_i(\cdot)$.
\end{enumerate}
When $\mathbf A$ is not diagonalizable, an analogous correspondence holds with generalized 
eigenfunctions (see Appendix~\ref{App:generalized_eigenfunction}).
\end{proposition}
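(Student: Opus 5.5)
The plan is to work directly from the defining identity of a linear representation, $K\mathbf g(\cdot)=\mathbf A\,\mathbf g(\cdot)$, interpreted componentwise as $Kg_j=\sum_k A_{jk}g_k$. The key observation is that $K$ acts linearly and the coefficient vectors are constant, so for any $\mathbf c\in\mathbb C^n$,
\[
K\langle \mathbf g,\mathbf c\rangle=\sum_j \overline{c_j}\,Kg_j=\sum_j\overline{c_j}\sum_k A_{jk}g_k=\langle \mathbf g,\mathbf A^*\mathbf c\rangle .
\]
Every claim will follow from this identity combined with the biorthogonality $\langle\mathbf r_j,\mathbf l_i\rangle=\delta_{ij}$.

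For (ii), I take $\mathbf l_i$ to be a left eigenvector, meaning $\mathbf l_i^*\mathbf A=\lambda_i\mathbf l_i^*$, or equivalently $\mathbf A^*\mathbf l_i=\overline{\lambda_i}\mathbf l_i$. The identity above then gives
\[
K\psi_i=\langle\mathbf g,\overline{\lambda_i}\mathbf l_i\rangle=\lambda_i\psi_i,
\]
because the coefficient slot is conjugated. It remains to check that $\psi_i\neq0$, and this is the one point needing care. It requires linear independence of $g_1,\dots,g_n$, in the relevant sense ($\mu$-a.e.), which I would state as standing. That independence is also implicit in the uniqueness of $\mathbf A$ asserted in Definition~\ref{def:linear-rep}. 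Given independence and $\mathbf l_i\neq0$, the function $\psi_i$ is nonzero, so it is a genuine eigenfunction. This immediately yields (i), since each $\lambda_i$ now has a nonzero eigenfunction.

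For (iii), diagonalizability lets me form the matrices $\mathbf R=[\mathbf r_1\cdots\mathbf r_n]$ and $\mathbf L=[\mathbf l_1\cdots\mathbf l_n]$. The biorthogonality condition says $\mathbf L^*\mathbf R=\mathbf I_n$, and since both are square this forces $\mathbf R\mathbf L^*=\mathbf I_n$, that is, $\sum_i\mathbf r_i\mathbf l_i^*=\mathbf I_n$. Applying this resolution of the identity to $\mathbf g(\cdot)$ gives
\[
\mathbf g(\cdot)=\sum_i\mathbf r_i\,(\mathbf l_i^*\mathbf g(\cdot))=\sum_i\mathbf r_i\psi_i(\cdot),
\]
where I use $\mathbf l_i^*\mathbf g=\sum_j\overline{(l_i)_j}g_j=\langle\mathbf g,\mathbf l_i\rangle$. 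This is exactly the Koopman mode expansion, with the $\mathbf r_i$ as modes.

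The main obstacle is bookkeeping rather than substance. The conjugation conventions in $\langle\mathbf g,\mathbf c\rangle$ must be kept consistent with the definition of a left eigenvector, and the nondegeneracy of $\psi_i$ must be justified from independence of the dictionary. The non-diagonalizable case is deferred to the appendix. There the same identity applied to Jordan chains of $\mathbf A^*$ gives $(K-\lambda)\psi^{(k)}=\psi^{(k-1)}$.
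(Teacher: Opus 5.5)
Your proof is correct. The paper does not prove this proposition; it cites it as well established. So there is no in-paper argument to compare against.

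Your route is the standard derivation: the identity $K\langle\mathbf g,\mathbf c\rangle=\langle\mathbf g,\mathbf A^*\mathbf c\rangle$, together with the resolution of the identity $\mathbf R\mathbf L^*=\mathbf I_n$ obtained from $\mathbf L^*\mathbf R=\mathbf I_n$. Your conjugation bookkeeping is consistent with the paper's convention: a left eigenvector means $\mathbf l_i^*\mathbf A=\lambda_i\mathbf l_i^*$, and the second slot of $\langle\mathbf g,\cdot\rangle$ is conjugate-linear.

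The one substantive point is the one you flagged. The claim $\psi_i\neq0$ needs linear independence of $g_1,\dots,g_n$ in $L^2(\mu)$. The statement leaves this implicit, but it is exactly what the uniqueness clause of Definition~\ref{def:linear-rep} presupposes. It is essential, not a technicality. Take $g_1=g_2=1$ and $F$ the identity map. Then $\mathbf A=\bigl(\begin{smallmatrix}0&1\\1&0\end{smallmatrix}\bigr)$ satisfies $K\mathbf g=\mathbf A\mathbf g$. Its eigenvalue $-1$ gives $\psi=(g_1-g_2)/\sqrt2=0$, and $-1$ is not a Koopman eigenvalue. So claim (i) fails without that hypothesis.

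Your Jordan-chain remark for the non-diagonalizable case is also correct. Iterating the same identity gives $(K-\lambda)^k\langle\mathbf g,\mathbf l\rangle=\langle\mathbf g,(\mathbf A^*-\bar\lambda\mathbf I_n)^k\mathbf l\rangle$, which is precisely the appendix's claim about generalized left eigenvectors.
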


While any finite-dimensional Koopman-invariant subspace admits such a spectral correspondence, some invariant subspaces are more informative than others.
Subspaces that contain directions mapped to zero by $K$ (e.g., observables supported on 
wandering sets) contribute degenerate spectral content with no dynamical meaning.  
This issue is absent for invertible dynamics, since the Koopman operator is then injective.
To exclude such redundancies, we restrict attention to subspaces on which $K$ acts invertibly.

\begin{definition}[Strong Koopman invariance]\label{def:strong-invariance}
A subspace $\mathcal V\subset L^2(\mu)$ is \emph{strongly Koopman-invariant} if 
$K\mathcal V=\mathcal V$ and $K|_\mathcal V$ is invertible.
\end{definition}

When \(K|_V\) has a nontrivial kernel, its elements are observables annihilated in one time step and therefore do not contribute nonzero Koopman spectral components.
Requiring invertibility ensures that the invariant subspace captures only dynamically meaningful spectral content.
\emph{Throughout this paper, ``Koopman-invariant'' means strongly Koopman-invariant 
unless otherwise stated.}

\paragraph{The spectral pollution problem.}
The invariance discussion above pinpoints precisely why EDMD struggles.
{
In formula~\eqref{eq:EDMD-pop}, the operator $A_E = P_\mathcal G^\mu \circ (K|_\mathcal G)$ 
uses the $L^2(\mu)$-orthogonal projection onto $\mathcal G$.  
When $\mathcal G$ is invariant, $P_\mathcal G^\mu$ acts as the identity on $K\mathcal G$ 
and $A_E$ recovers $K|_\mathcal G$ exactly.  
When $\mathcal G$ is \emph{not} invariant, $K\mathcal G$ extends beyond $\mathcal G$,
and the orthogonal projection can mix invariant components with transient
directions that lie outside $\mathcal G$.
This mixing can produce spurious eigenvalues, \emph{spectral pollution}, whose relation to the true Koopman spectrum is not guaranteed.
}
The spectral pollution considered here is a finite-dimensional compression effect and should be distinguished from the intrinsic limitations of the ordinary \(L^2(\mu)\) point spectrum.
{Systems with continuous spectra can instead be studied using spectral measures~\cite{korda2020data}, pseudospectral methods~\cite{colbrook2024rigorous}, or formulations in a rigged Hilbert space~\cite{colbrook2025rigged}.}
Our focus is different: we address spurious
eigenvalues created by the \(L^2(\mu)\)-orthogonal compression
\(P_{\mathcal G}^{\mu}\circ(K|_{\mathcal G})\) when \(\mathcal G\) is not
Koopman-invariant.

\section{A Projected Koopman Approximation Framework}\label{sec:operator-framework}

\subsection{The Projected Koopman Operator Approximation (PKOA)}

{As observed in Section~\ref{sec:EDMD}, the EDMD operator $A_E = P_{\mathcal{G}}^{\mu}\circ (K|_{\mathcal{G}})$ is built from the $L^2(\mu)$-orthogonal projection onto the full dictionary $\mathcal{G}$. When $\mathcal{G}$ is not Koopman-invariant, this projection can pollute the spectrum by mixing genuine invariant content with transient directions. But it is invariant subspaces, not $L^2(\mu)$-orthogonality, that anchor the Koopman spectrum. This motivates replacing the orthogonal projection with a more general one that retains the invariant structure within $\mathcal{G}$.}

\paragraph{Generalized projections.}
Let $\mathcal D_P\subseteq L^2(\mu)$ be a linear subspace, and consider a map
\[
P:\mathcal D_P\to\mathcal D_P.
\]
We call $P$ an \emph{admissible generalized projection} relative to
$\mathcal G$ if it satisfies the following conditions:
\begin{enumerate}
\item[(C1)] \emph{Linearity and idempotence:}
$P$ is linear and $P^2=P$.

\item[(C2)] \emph{Range condition:}
\[
\operatorname{ran}(P)=\mathcal G_P\subseteq\mathcal G.
\]

\item[(C3)] \emph{Domain condition:}
\[
K\mathcal G\subseteq\mathcal D_P.
\]

\item[(C4)] \emph{Invariant-content condition:}
every Koopman-invariant subspace of $\mathcal G$ is contained in
$\mathcal G_P$.
\end{enumerate}
Conditions~\textnormal{(C1)} and~\textnormal{(C2)} make $P$ an algebraic
projection onto $\mathcal G_P$; no orthogonality is required.
Condition~\textnormal{(C3)} ensures that the associated projected Koopman
approximation
\[
A_P:=P\circ(K|_{\mathcal G})
:\mathcal G\longrightarrow\mathcal G_P
\]
is well defined, while condition~\textnormal{(C4)} ensures that the projection
range retains all Koopman-invariant content represented in the dictionary.

In particular, every admissible range satisfies
\[
\mathcal V_{\max}^{\mathcal G}
\subseteq\mathcal G_P
\subseteq\mathcal G,
\]
so these ranges interpolate between the maximal invariant core and
the full dictionary. At the full-dictionary endpoint, choosing the
function-space orthogonal projector gives population EDMD. At the
invariant-core endpoint, the restriction of $A_P$ to
$\mathcal V_{\max}^{\mathcal G}$ is the exact Koopman restriction. Different
admissible projections, even those with the same range, may still
act differently away from the invariant core.

When discussing its spectrum, we regard $A_P$ as an endomorphism of
$\mathcal G$ through the natural inclusion
$\mathcal G_P\hookrightarrow\mathcal G$.

\paragraph{Spectral preservation.}
Admissible generalized projections preserve every Koopman eigenpair
in $\mathcal G$ with nonzero eigenvalue, as the following proposition
makes precise.

\begin{proposition}[Preservation of Koopman eigenpairs with nonzero eigenvalue]
\label{prop:AP-preserve}
Suppose $P$ satisfies \textnormal{(C1)--(C4)}. If
$f\in\mathcal G\setminus\{0\}$ and
\[
Kf=\lambda f,
\qquad \lambda\ne0,
\]
then $f\in\operatorname{ran}(P)$ and
\[
A_Pf=\lambda f.
\]
Consequently,
\[
\{\lambda\in\mathbb C\setminus\{0\}:
  \exists f\in\mathcal G\setminus\{0\},\ Kf=\lambda f\}
\subseteq\sigma(A_P).
\]
\end{proposition}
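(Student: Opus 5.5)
The plan is to reduce everything to condition (C4) by exhibiting a one-dimensional strongly Koopman-invariant subspace. Given $f\in\mathcal G\setminus\{0\}$ with $Kf=\lambda f$ and $\lambda\neq0$, set $\mathcal V:=\operatorname{span}\{f\}\subseteq\mathcal G$. First I check that $\mathcal V$ is strongly Koopman-invariant in the sense of Definition~\ref{def:strong-invariance}. For any $cf\in\mathcal V$ we have $K(cf)=c\lambda f\in\mathcal V$, so $K\mathcal V\subseteq\mathcal V$. Conversely, $cf=K\bigl((c/\lambda)f\bigr)$, so $K\mathcal V=\mathcal V$. The restriction $K|_{\mathcal V}$ is multiplication by $\lambda$ on a one-dimensional space, hence invertible with inverse multiplication by $\lambda^{-1}$.

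This is exactly where $\lambda\neq0$ enters, and it is the only delicate point. For $\lambda=0$, the span is still invariant in the weak sense $K\mathcal V\subseteq\mathcal V$, but it fails the strong invariance required by (C4) under the paper's convention. So I will state explicitly that the nonzero eigenvalue is what makes (C4) applicable.

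By (C4), $\mathcal V\subseteq\mathcal G_P=\operatorname{ran}(P)$, so $f\in\operatorname{ran}(P)$. By (C1), $P$ is idempotent, so $P$ acts as the identity on its range: writing $f=Pg$ gives $Pf=P^2g=Pg=f$. Condition (C3) guarantees $Kf\in K\mathcal G\subseteq\mathcal D_P$, so $A_Pf$ is defined. By linearity of $P$,
\[
A_Pf=P(Kf)=P(\lambda f)=\lambda Pf=\lambda f .
\]

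For the final inclusion, regard $A_P$ as an endomorphism of $\mathcal G$ via the inclusion $\mathcal G_P\hookrightarrow\mathcal G$, as stipulated. The identity above shows that $f\neq0$ is an eigenvector of $A_P$ with eigenvalue $\lambda$, so $\lambda\in\sigma(A_P)$. Since $f$ and $\lambda\neq0$ were arbitrary, the stated set is contained in $\sigma(A_P)$.
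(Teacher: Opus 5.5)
Your proof is correct and follows essentially the same route as the paper: $\lambda\neq0$ makes $\operatorname{span}\{f\}$ strongly Koopman-invariant, (C4) places it in $\operatorname{ran}(P)$, and idempotence (C1) gives $Pf=f$, hence $A_Pf=\lambda f$. Your explicit verification of strong invariance, of well-definedness via (C3), and of the final spectral inclusion spells out steps the paper leaves implicit.
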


\begin{proof}
Since $\lambda\ne0$, the space $\operatorname{span}\{f\}$ is
Koopman-invariant and is therefore contained in $\operatorname{ran}(P)$ by
\textnormal{(C4)}. By \textnormal{(C1)}, $Pf=f$, and hence
\[
A_Pf=P(Kf)=P(\lambda f)=\lambda f.
\]
\end{proof}

\begin{remark}[Compatibility of the factorization]\label{rmk:universality}
Suppose $K|_{\mathcal G}$ is injective and a linear map
$A:\mathcal G\to\mathcal G$ is prescribed. The relation
$\widetilde P(Kf):=Af$ defines a linear map from $K\mathcal G$ into
$\operatorname{ran}(A)$. It extends to a projection onto
$\operatorname{ran}(A)$ if and only if
\[
\widetilde P y=y
\quad\text{for every }y\in K\mathcal G\cap\operatorname{ran}(A),
\]
or equivalently, $Af=Kf$ whenever $Kf\in\operatorname{ran}(A)$. If
$K|_{\mathcal G}$ is not injective, one additionally needs
$A|_{\ker(K|_{\mathcal G})}=0$ for $\widetilde P$ to be well defined. Thus,
factorization through a projection imposes a genuine compatibility condition;
(C4) further ensures preservation of the invariant content.
\end{remark}

\subsection{Filtered EDMD Operators via Forward Intersections}\label{sec:F-EDMD}

We now turn to the specific projection used throughout the paper. The construction proceeds in two stages: we first identify a natural family of subspaces compatible with the Koopman action, then define the Filtered EDMD operator by projecting onto a member of that family.

\subsubsection{The forward--intersection chain}

Any $K$-invariant subspace of $\mathcal{G}$ must be stable under one Koopman step, hence
\[
\mathcal{V} \subseteq \mathcal{G} \cap K\mathcal{G}
\]
for every $K$-invariant $\mathcal{V}\subseteq\mathcal{G}$.  
This observation suggests constructing admissible projection ranges 
by enforcing multi-step persistence under the Koopman action.
Starting from $\mathcal{G}$, we iteratively intersect with its forward image:
\begin{equation}\label{eq:chain}
\mathcal S_0 := \mathcal G,\qquad 
\mathcal S_{j+1} := \mathcal S_j \cap K\mathcal S_j\quad (j\ge 0).
\end{equation}
This generates a nested chain
\[
\mathcal G = \mathcal S_0 \supseteq \mathcal S_1 \supseteq \mathcal S_2 \supseteq \cdots
\]
that progressively filters out directions failing to persist under repeated 
Koopman actions.  The chain converges to the largest invariant subspace:

\begin{definition}[Largest $K$-invariant subspace]\label{def:largest-invariant}
Let $\mathcal{G}\subseteq L^2(\mu)$ be finite-dimensional.  
The \emph{largest $K$-invariant subspace} of $\mathcal{G}$, denoted 
$\mathcal{V}_{\max}^{\mathcal{G}}$, is the unique $K$-invariant subspace of $\mathcal{G}$ 
that contains every other $K$-invariant subspace of $\mathcal{G}$.
\end{definition}

\begin{proposition}[Fixed point of the forward--intersection chain]\label{prop:fixpoint}
Let $K:\mathcal{H}\to\mathcal{H}$ be linear on a Hilbert space $\mathcal{H}$, 
and let $\mathcal{S}_0\subset\mathcal{H}$ be finite-dimensional.  
Define the chain $\{\mathcal{S}_j\}$ by~\eqref{eq:chain}.  Then:
\begin{enumerate}
\item \textbf{Finite-step stabilization.}  
The chain stabilizes in at most $\dim \mathcal{S}_0$ steps: 
there exists $i\le \dim \mathcal{S}_0$ such that
\(
\mathcal{S}_{i+1} = \mathcal{S}_i =: \mathcal{S}_\ast,
\)
and $\mathcal{S}_\ast$ is $K$-invariant.

\item \textbf{Maximality.}  
If $K|_{\mathcal{S}_0}$ is injective, then 
$\mathcal{S}_\ast = \mathcal{V}_{\max}^{\mathcal{S}_0}$.
\end{enumerate}
\end{proposition}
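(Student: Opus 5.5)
Part 1 follows from a dimension count. Since $\mathcal S_{j+1}\subseteq\mathcal S_j$ by construction, the integers $d_j:=\dim\mathcal S_j$ form a nonincreasing sequence bounded below by $0$. As long as $\mathcal S_{j+1}\neq\mathcal S_j$, the dimension drops by at least one. Hence a strict inclusion can occur at most $\dim\mathcal S_0$ times, and there is some $i\le\dim\mathcal S_0$ with $\mathcal S_{i+1}=\mathcal S_i=:\mathcal S_\ast$.

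The equality $\mathcal S_\ast=\mathcal S_\ast\cap K\mathcal S_\ast$ means $\mathcal S_\ast\subseteq K\mathcal S_\ast$. The reverse inclusion $K\mathcal S_\ast\subseteq\mathcal S_\ast$ is what "$K$-invariant" requires, so it has to be shown. Here I use finite dimensionality: $\dim K\mathcal S_\ast\le\dim\mathcal S_\ast$ because $K\mathcal S_\ast$ is the image of $\mathcal S_\ast$ under a linear map. Combined with $\mathcal S_\ast\subseteq K\mathcal S_\ast$, this forces $K\mathcal S_\ast=\mathcal S_\ast$, so $\mathcal S_\ast$ is $K$-invariant. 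As a byproduct, $K|_{\mathcal S_\ast}$ is surjective onto a space of the same finite dimension, hence injective. So $\mathcal S_\ast$ is even strongly invariant in the sense of Definition~\ref{def:strong-invariance}, with no injectivity hypothesis needed for Part 1.

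For Part 2, I need $\mathcal S_\ast$ to contain every (strongly) $K$-invariant $\mathcal V\subseteq\mathcal S_0$. Then $\mathcal S_\ast$, being itself invariant by Part 1, is the largest one and equals $\mathcal V_{\max}^{\mathcal S_0}$. I argue by induction that $\mathcal V\subseteq\mathcal S_j$ for all $j$. The base case $j=0$ is the hypothesis. If $\mathcal V\subseteq\mathcal S_j$, then $\mathcal V=K\mathcal V\subseteq K\mathcal S_j$, so $\mathcal V\subseteq\mathcal S_j\cap K\mathcal S_j=\mathcal S_{j+1}$.

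The point needing care is which notion of invariance is in force. Under the paper's convention, invariance means $K\mathcal V=\mathcal V$, and the induction goes through directly. If one only assumes $K\mathcal V\subseteq\mathcal V$, I would invoke injectivity of $K|_{\mathcal S_0}$. It makes $K|_{\mathcal V}$ injective, so $\dim K\mathcal V=\dim\mathcal V$, and together with $K\mathcal V\subseteq\mathcal V$ this gives $K\mathcal V=\mathcal V$. This is where the hypothesis of Part 2 enters: it excludes invariant subspaces with nontrivial kernel, which need not lie in $K\mathcal S_0$ and thus could be discarded by the chain. Existence and uniqueness of $\mathcal V_{\max}^{\mathcal S_0}$ then also follow, since $\mathcal S_\ast$ is invariant and contains all others.
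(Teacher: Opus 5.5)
Your proof is correct and follows the paper's argument: for Part 1, the dimension count together with $\mathcal S_\ast\subseteq K\mathcal S_\ast$ and $\dim K\mathcal S_\ast\le\dim\mathcal S_\ast$; for Part 2, the induction $\mathcal V=K\mathcal V\subseteq K\mathcal S_j$, with injectivity used to upgrade $K\mathcal V\subseteq\mathcal V$ to equality. Your two additional observations are also correct: $K|_{\mathcal S_\ast}$ is automatically invertible, and under the paper's strong-invariance convention the injectivity hypothesis is not actually needed for Part 2.
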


\begin{proof}
(1) The sequence $(\mathcal{S}_j)$ is nested.  
If $\mathcal{S}_{i+1}=\mathcal{S}_i$, then 
$\mathcal{S}_i\subseteq K\mathcal{S}_i$; since 
$\dim(K\mathcal{S}_i)\le \dim(\mathcal{S}_i)$, equality holds, 
so $K\mathcal{S}_i=\mathcal{S}_i$.  
If no stabilization occurs, each step strictly decreases the dimension, 
which can happen at most $\dim \mathcal{S}_0$ times.

(2) Let $U\subseteq \mathcal{S}_0$ be $K$-invariant.  
Injectivity implies $\dim(KU)=\dim(U)$, so $KU=U$.  
In particular $U\subseteq K\mathcal{S}_0$, hence 
$U\subseteq \mathcal{S}_0\cap K\mathcal{S}_0=\mathcal{S}_1$.  
By induction, $U\subseteq \mathcal{S}_j$ for all $j$, 
so $U\subseteq \mathcal{S}_\ast$.
\end{proof}

The chain $\{\mathcal{S}_j\}$ has two key properties that make it 
the natural choice of projection ranges:
\emph{(i) Spectral faithfulness}---each $\mathcal{S}_j$ removes directions 
that fail to persist for $j$ Koopman steps, suppressing transient contamination 
in a controlled manner; and
\emph{(ii) Maximality}---for a given persistence level $j$,
the space $\mathcal{S}_j$ is the largest subspace consistent with
$j$-step persistence, so filtering is never more restrictive than necessary. {Maximality is structural, not spectral: $\mathcal V_{\max}^{\mathcal G}$ is
the largest Koopman-invariant subspace inside $\mathcal G$, but its richness is
fixed by the dynamics, not the dictionary. If $K$ has no nontrivial $L^2(\mu)$
eigenfunctions, none can be created by enlarging $\mathcal G$, so
$\mathcal V_{\max}^{\mathcal G}$ collapses to the constants. This is the case for
mixing dynamics (see~\cite{Mezic2022Numerical}), whose spectrum is continuous;
recovering it then requires leaving $L^2(\mu)$~\cite{colbrook2025rigged}.}

\subsubsection{The Filtered EDMD operator}

With the chain in hand, we define the Filtered EDMD operator by projecting
the Koopman action onto its members.

\begin{definition}[Admissible subspace and Filtered EDMD operator]\label{def:F-EDMD-operator}
We call a subspace $\mathcal S$ \emph{admissible} if
\begin{equation}\label{eq:admissible-subspace}
\mathcal V_{\max}^{\mathcal G}\ \subseteq\ \mathcal S\ \subseteq\ \mathcal G \cap K\mathcal G,
\end{equation}
that is, $\mathcal S$ contains the maximal Koopman-invariant subspace of $\mathcal G$ and lies
in the one-step compatible space $\mathcal G\cap K\mathcal G$. An admissible subspace is in
particular an admissible projection range in the sense of \textnormal{(C1)--(C4)}, with the
additional property $\mathcal S\subseteq K\mathcal G$; this inclusion makes the
coordinate-orthogonal projector of Section~\ref{sec:canonical-projectors} well defined.
For an admissible $\mathcal S$, let $P_{\mathcal S}:K\mathcal G \to \mathcal S$ be a projection
(not necessarily $L^2(\mu)$-orthogonal).
The \emph{Filtered EDMD operator} associated with $\mathcal S$ is
\begin{equation}\label{GFEDMD}
A_{\mathcal S} \;:=\; P_{\mathcal S} \circ (K|_{\mathcal G}):\ \mathcal G \longrightarrow \mathcal S.
\end{equation}
\end{definition}

This construction specializes the PKOA framework by restricting the projection range 
to subspaces that are compatible with the Koopman action: 
$\mathcal{S}$ always contains $\mathcal{V}_{\max}^{\mathcal{G}}$ while excluding 
directions outside $\mathcal{G}\cap K\mathcal{G}$ that cannot carry genuine spectral content.
The projection $P_{\mathcal S}$ need not be orthogonal, consistent with developments in nonlinear
reduced-order modeling, where oblique projections align reduced
representations with dynamically relevant
directions~\cite{OttoMacchioRowley2023}.

\begin{remark}[Role of the projection]\label{rmk:projection-role}
{
Definition~\ref{def:F-EDMD-operator} leaves the projection 
$P_{\mathcal S}$ unspecified because the spectral guarantee in
Proposition~\ref{prop:AP-preserve} depends only on its range. Any two
projections onto the same admissible $\mathcal S$ yield Filtered EDMD
operators that agree with $K$ on $\mathcal V_{\max}^{\mathcal G}$ and hence
preserve the same Koopman eigenpairs with nonzero eigenvalues, although they may differ on
complementary directions in $\mathcal G$. Thus, $\mathcal S$ determines the retained range,
while $P_{\mathcal S}$ determines the action away from the invariant core.
Section~\ref{sec:canonical-projectors} compares the coordinate-orthogonal and
$L^2(\mu)$-orthogonal choices and their relations to EDMD and the comparison
methods;
Section~\ref{sec:algorithms} constructs the coordinate-orthogonal choice from
data.
}
\end{remark}

Each filtered level $\mathcal S_j$, $j\geq1$, is an admissible projection
range because
$\mathcal V_{\max}^{\mathcal G}\subseteq\mathcal S_\ast
\subseteq\mathcal S_j\subseteq\mathcal S_1
=\mathcal G\cap K\mathcal G$. Choosing any projection
$P_{\mathcal S_j}:K\mathcal G\to\mathcal S_j$ then gives an admissible PKOA,
so $A_{\mathcal S_j}$ inherits the spectral preservation guarantee of
Proposition~\ref{prop:AP-preserve}.
{
\subsubsection{Canonical projection choices and their relation to EDMD}
\label{sec:canonical-projectors}

For this population-level comparison, assume that
$\{g_j\}_{j=1}^n$ is linearly independent in $L^2(\mu)$ and that
$K|_{\mathcal G}$ is injective. Then
$K\mathbf g=(Kg_1,\ldots,Kg_n)^\top$ is a basis of $K\mathcal G$. Following
the coefficient convention of Section~\ref{sec:EDMD}, define the
conjugate-linear bijection
\[
\Psi_K:\mathbb C^n\to K\mathcal G,
\qquad
\Psi_K\mathbf c:=\langle K\mathbf g,\mathbf c\rangle
=\sum_{j=1}^n\overline{c_j}\,Kg_j.
\]
For an admissible $\mathcal S$, set
$W_{\mathcal S}:=\Psi_K^{-1}(\mathcal S)$. This gives two natural choices for
the projector in Definition~\ref{def:F-EDMD-operator}.

\begin{enumerate}[label=\textnormal{(\roman*)}]
\item \emph{Coordinate-orthogonal projection.}
Let $r=\dim\mathcal S$ and let $\mathbf Q\in\mathbb C^{n\times r}$ have
Euclidean-orthonormal columns spanning $W_{\mathcal S}$. Define
\[
\boldsymbol\Pi_{\mathcal S}^{\mathrm c}:=\mathbf Q\mathbf Q^*,
\qquad
P_{\mathcal S}^{\mathrm c}\Psi_K\mathbf c
:=\Psi_K\boldsymbol\Pi_{\mathcal S}^{\mathrm c}\mathbf c,
\qquad
A_{\mathcal S}^{\mathrm c}
:=P_{\mathcal S}^{\mathrm c}\circ(K|_{\mathcal G}).
\]
The coefficient projector $\boldsymbol\Pi_{\mathcal S}^{\mathrm c}$ is
Euclidean-orthogonal, whereas the induced function-space projector
$P_{\mathcal S}^{\mathrm c}$ is generally oblique in $L^2(\mu)$. Once
$W_{\mathcal S}$ is fixed, this construction requires no $L^2(\mu)$ inner
products or Gram-matrix estimate; its geometry is instead determined by the
chosen $K\mathbf g$ coordinates, hence basis-dependent.

\item \emph{$L^2(\mu)$-orthogonal projection.}
Let $P_{\mathcal S}^{\mu}:L^2(\mu)\to\mathcal S$ be the orthogonal projector
and define
\[
A_{\mathcal S}^{\mu}:=P_{\mathcal S}^{\mu}\circ(K|_{\mathcal G}).
\]
This choice depends on $\mu$ through the intrinsic function-space geometry. It also extends to
any finite-dimensional target contained in $\mathcal G$, admissible or not. In particular, at $\mathcal S_0=\mathcal G$,
$A_{\mathcal G}^{\mu}=A_E$. The coordinate choice includes
$\mathcal S_0$ only when $K\mathcal G=\mathcal G$; otherwise it begins at
$\mathcal S_1\subseteq K\mathcal G$.
\end{enumerate}

Injectivity is used only to identify $K\mathbf g$ as a basis and to make the
Gram matrix below positive definite. Without injectivity, set
$\widetilde r:=\dim\Psi_K^{-1}(\mathcal S)
=\dim\mathcal S+\dim\ker\Psi_K$ and choose
$\mathbf Q\in\mathbb C^{n\times\widetilde r}$ with Euclidean-orthonormal
columns spanning $\Psi_K^{-1}(\mathcal S)$. Since
$\ker\Psi_K\subseteq\Psi_K^{-1}(\mathcal S)$, the projector fixes
$\ker\Psi_K$, so $P_{\mathcal S}^{\mathrm c}\Psi_K\mathbf c$ is independent
of the coefficient representative. Using this $\mathbf Q$ in the same formulas
gives the coefficient-space extension used by the algorithms below.

\begin{proposition}[Canonical projectors and EDMD]
\label{prop:canonical-projectors-edmd}
Let $\mathbf H_K$ be the positive-definite Gram matrix satisfying
\[
\langle\Psi_K\mathbf a,\Psi_K\mathbf b\rangle_\mu
=\mathbf a^*\mathbf H_K\mathbf b.
\]
For each admissible $\mathcal S$,
\begin{equation}\label{eq:canonical-projector-coincidence}
P_{\mathcal S}^{\mathrm c}=P_{\mathcal S}^{\mu}|_{K\mathcal G}
\quad\Longleftrightarrow\quad
\mathbf H_K\boldsymbol\Pi_{\mathcal S}^{\mathrm c}
=\boldsymbol\Pi_{\mathcal S}^{\mathrm c}\mathbf H_K.
\end{equation}
Consequently, when these equivalent conditions hold,
$A_{\mathcal S}^{\mathrm c}=A_{\mathcal S}^{\mu}$. In particular, the two
operators coincide for every admissible $\mathcal S$ when $K\mathbf g$ is
$L^2(\mu)$-orthonormal. Mutually orthogonal elements with a common norm are
also sufficient; orthogonality with unequal norms is not sufficient for an
arbitrary $\mathcal S$.

For any finite-dimensional $\mathcal U\subseteq L^2(\mu)$, define
$A_E^{\mathcal U}:=P_{\mathcal U}^{\mu}\circ(K|_{\mathcal U})$. Then every
finite-dimensional $\mathcal S\subseteq\mathcal G$ satisfies
\begin{equation}\label{eq:l2-filtered-edmd-relation}
A_{\mathcal S}^{\mu}=P_{\mathcal S}^{\mu}\circ A_E^{\mathcal G},
\qquad
A_{\mathcal S}^{\mu}|_{\mathcal S}=A_E^{\mathcal S},
\qquad A_E^{\mathcal G}=A_E.
\end{equation}
\end{proposition}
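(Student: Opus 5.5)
The plan is to transport everything to coefficient space via $\Psi_K$, where both projectors become $n\times n$ matrices acting on $\mathbb C^n$.

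\textbf{Step 1: the coefficient form of the $\mu$-projector.} Since $\mathcal S\subseteq K\mathcal G$, the map $P_{\mathcal S}^{\mu}|_{K\mathcal G}$ is an $L^2(\mu)$-orthogonal projection of $K\mathcal G$ onto $\mathcal S$. Pulled back by $\Psi_K$, it is the orthogonal projector onto $W_{\mathcal S}$ in the inner product $\langle\mathbf a,\mathbf b\rangle_{H}$ defined by $\langle\Psi_K\mathbf a,\Psi_K\mathbf b\rangle_\mu=\mathbf a^*\mathbf H_K\mathbf b$. The conjugate-linearity of $\Psi_K$ only conjugates this form, and orthogonality is unaffected. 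Writing $\boldsymbol\Pi^\mu$ for this matrix, $\boldsymbol\Pi^\mu=\mathbf Q(\mathbf Q^*\mathbf H_K\mathbf Q)^{-1}\mathbf Q^*\mathbf H_K$ (up to the transpose/conjugate convention fixed by the pairing). Its characterization is: $\boldsymbol\Pi^\mu$ is idempotent with range $W_{\mathcal S}$ and is $\mathbf H_K$-self-adjoint, i.e.\ $\mathbf H_K\boldsymbol\Pi^\mu=(\boldsymbol\Pi^\mu)^*\mathbf H_K$.

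\textbf{Step 2: the equivalence.} The two projectors coincide iff $\boldsymbol\Pi^{\mathrm c}=\mathbf Q\mathbf Q^*$ is $\mathbf H_K$-self-adjoint, because a projection with a given range is determined by this self-adjointness. Since $\boldsymbol\Pi^{\mathrm c}$ is Hermitian, the condition $\mathbf H_K\boldsymbol\Pi^{\mathrm c}=(\boldsymbol\Pi^{\mathrm c})^*\mathbf H_K$ is exactly the commutation $\mathbf H_K\boldsymbol\Pi^{\mathrm c}=\boldsymbol\Pi^{\mathrm c}\mathbf H_K$. Equality of the operators $A_{\mathcal S}^{\mathrm c}=A_{\mathcal S}^{\mu}$ then follows by composing with $K|_{\mathcal G}$, whose range lies in $K\mathcal G$. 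The sufficient conditions are immediate from the equivalence: orthonormality gives $\mathbf H_K=\mathbf I$, and a common norm gives $\mathbf H_K=c\mathbf I$. For the failure of unequal norms, I would use a counterexample with $n=2$, $\mathbf H_K=\mathrm{diag}(1,2)$, and $W_{\mathcal S}=\mathrm{span}\{(1,1)^\top\}$. Then $\boldsymbol\Pi^{\mathrm c}=\tfrac12\begin{pmatrix}1&1\\1&1\end{pmatrix}$ does not commute with $\mathbf H_K$. This example still has to be realized by an admissible $\mathcal S$. I would take a system where $K\mathcal G=\mathcal G$ and $Kg_1$, $Kg_2$ are orthogonal eigenfunctions of unequal norm whose eigenvalues are equal (e.g.\ $\lambda=1$ with two invariant indicators). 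Then every subspace is invariant, $\mathcal V_{\max}^{\mathcal G}$ is all of $\mathcal G$, and only $\mathcal S=\mathcal G$ is admissible. That is a problem: the counterexample must be arranged so that $\mathcal V_{\max}^{\mathcal G}$ is small but $\mathcal S_1$ is larger, for instance $\mathcal G$ containing an eigenfunction plus a non-invariant direction mapped into $\mathcal G$. This construction is the main obstacle; the rest is linear algebra.

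\textbf{Step 3: the relation \eqref{eq:l2-filtered-edmd-relation}.} The identity $A_E^{\mathcal G}=A_E$ is the definition \eqref{eq:EDMD-pop}. Since $\mathcal S\subseteq\mathcal G$, orthogonal projections nest: $P^\mu_{\mathcal S}P^\mu_{\mathcal G}=P^\mu_{\mathcal S}$. Indeed, $h-P^\mu_{\mathcal G}h\perp\mathcal G\supseteq\mathcal S$, so $P^\mu_{\mathcal S}(h-P^\mu_{\mathcal G}h)=0$. Hence $P^\mu_{\mathcal S}\circ A_E^{\mathcal G}=P^\mu_{\mathcal S}P^\mu_{\mathcal G}K|_{\mathcal G}=A^\mu_{\mathcal S}$. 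Restricting to $\mathcal S$ gives $P^\mu_{\mathcal S}K|_{\mathcal S}=A_E^{\mathcal S}$ by definition.
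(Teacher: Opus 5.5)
Your argument for \eqref{eq:canonical-projector-coincidence}, for $A_{\mathcal S}^{\mathrm c}=A_{\mathcal S}^{\mu}$, for sufficiency when $\mathbf H_K$ is a multiple of $\mathbf I_n$, and for \eqref{eq:l2-filtered-edmd-relation} is the paper's proof. You pull back to the $\mathbf H_K$ metric, note that the Hermitian idempotent $\mathbf Q\mathbf Q^*$ with range $W_{\mathcal S}$ is the $\mathbf H_K$-orthogonal projector iff it commutes with $\mathbf H_K$ (the paper's ``$W_{\mathcal S}$ reduces $\mathbf H_K$''), and use $P_{\mathcal S}^{\mu}P_{\mathcal G}^{\mu}=P_{\mathcal S}^{\mu}$.

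What is missing is the claim that orthogonality with unequal norms does not suffice. You leave it open; the paper's proof also leaves it implicit. Your diagnosis of the obstacle is off. You do not need $\mathcal V_{\max}^{\mathcal G}\subsetneq\mathcal S_1$: the admissible range may be the invariant core itself. Your first attempt fails only because $K\mathcal G=\mathcal G$ forces $\mathcal S=\mathcal G$, hence $W_{\mathcal S}=\mathbb C^n$ and $\boldsymbol\Pi_{\mathcal S}^{\mathrm c}=\mathbf I_n$. What you need is a non-invariant $\mathcal G$ and an admissible $\mathcal S$ whose preimage $W_{\mathcal S}$ is not spanned by eigenvectors of $\mathbf H_K$. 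Your suggested repair does not supply a non-invariant $\mathcal G$ in your two-dimensional setting. If $\mathcal G=\operatorname{span}\{\phi,h\}$ with $K\phi=\lambda\phi$ and $Kh\in\mathcal G$, then $K\mathcal G\subseteq\mathcal G$, and injectivity again gives $K\mathcal G=\mathcal G$.

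A working instance is the paper's Example~\ref{ex:filtered-vs-orthogonal} in a different basis. Take $F(x)=x^2$ on $[0,1]$ with Lebesgue $\mu$ and $\mathbf g=(1-\tfrac53x,\ \tfrac53x)^\top$, so $\mathcal G=\operatorname{span}\{1,x\}$ and $K|_{\mathcal G}$ is injective. Then:
\begin{itemize}
\item $K\mathbf g=(1-\tfrac53x^2,\ \tfrac53x^2)^\top$ is $L^2(\mu)$-orthogonal with $\mathbf H_K=\operatorname{diag}(\tfrac49,\tfrac59)$;
\item the only admissible range is $\mathcal S=\mathcal S_1=\mathcal V_{\max}^{\mathcal G}=\operatorname{span}\{1\}$;
\item $Kg_1+Kg_2=1$ gives $W_{\mathcal S}=\operatorname{span}\{(1,1)^\top\}$, the mechanism of your $2\times2$ example.
\end{itemize}
Explicitly, $A_{\mathcal S}^{\mathrm c}g_2=\tfrac12(Kg_1+Kg_2)=\tfrac12$, whereas $A_{\mathcal S}^{\mu}g_2=\langle\tfrac53x^2,1\rangle_\mu=\tfrac59$.
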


\begin{proof}
Transporting the $L^2(\mu)$ inner product to $\mathbb C^n$ gives the metric
$\mathbf H_K$. The Euclidean projector
$\boldsymbol\Pi_{\mathcal S}^{\mathrm c}$ is orthogonal in this metric
exactly when $W_{\mathcal S}$ reduces $\mathbf H_K$, which is equivalent
to~\eqref{eq:canonical-projector-coincidence}. Transporting this identity back
through $\Psi_K$ gives the asserted projector equality. For
\eqref{eq:l2-filtered-edmd-relation}, the inclusion
$\mathcal S\subseteq\mathcal G$ implies
$P_{\mathcal S}^{\mu}\circ P_{\mathcal G}^{\mu}=P_{\mathcal S}^{\mu}$.
Composing with $K|_{\mathcal G}$ gives the first identity, and restricting to
$\mathcal S$ gives the second.
\end{proof}

\begin{example}[The two projections and EDMD]\label{ex:filtered-vs-orthogonal}
Let $F(x)=x^2$ on $[0,1]$, let $\mu$ be Lebesgue measure, and let
$Kf=f\circ F$. Take $\mathcal G=\operatorname{span}\{1,x\}$. Then
$K\mathcal G=\operatorname{span}\{1,x^2\}$ and
$\mathcal S=\mathcal G\cap K\mathcal G=\operatorname{span}\{1\}$. On
$K\mathcal G$,
\[
P_{\mathcal S}^{\mathrm c}(a+bx^2)=a,
\qquad
P_{\mathcal S}^{\mu}(a+bx^2)=a+\frac b3.
\]
Therefore
\[
A_{\mathcal S}^{\mathrm c}1=A_{\mathcal S}^{\mu}1=A_E1=1,
\qquad
A_{\mathcal S}^{\mathrm c}x=0,
\qquad
A_{\mathcal S}^{\mu}x=\frac13,
\qquad
A_Ex=x-\frac16.
\]
Thus the three operators agree on the invariant constant function but differ
on the non-invariant direction $x$.
\end{example}

\begin{remark}[Relation to comparison methods]\label{Relation_to_SSD}
At the population level, exact SSD~\citep{haseli2021learning} targets the
terminal invariant space $\mathcal V_{\max}^{\mathcal G}$, whereas
T-SSD~\citep{haseli2023generalizing} selects a tolerance-dependent subspace.
Both then fit EDMD on the selected space. Thus, if either method selects
$\mathcal U\subseteq\mathcal G$, its reduced population operator is
\[
A_E^{\mathcal U}=A_{\mathcal U}^{\mu}|_{\mathcal U}.
\]
If an admissible $\mathcal S$ is Koopman-invariant, no orthogonality assumption on
$K\mathbf g$ is needed and
\[
A_{\mathcal S}^{\mathrm c}|_{\mathcal S}
=A_{\mathcal S}^{\mu}|_{\mathcal S}
=A_E^{\mathcal S}
=K|_{\mathcal S};
\]
the reduced SSD or T-SSD operator also equals $K|_{\mathcal S}$ whenever that
method selects $\mathcal S$.
In particular, if $K\mathcal G\subseteq\mathcal G$, injectivity gives
$K\mathcal G=\mathcal G$ and exact SSD selects $\mathcal G$. If instead
$\mathcal S_1=\mathcal G\cap K\mathcal G$ is invariant,
Proposition~\ref{prop:fixpoint} gives
$\mathcal S_1=\mathcal V_{\max}^{\mathcal G}$ and exact SSD selects
$\mathcal S_1$. In either case, the reduced T-SSD operator equals the
corresponding Koopman restriction whenever T-SSD selects that space.

ResDMD and RFB-EDMD do not enter these population identities;
Section~\ref{sec:experiments} compares their finite-data outputs.

These identities concern restrictions to $\mathcal S$: Filtered EDMD acts from
$\mathcal G$ into $\mathcal S$, whereas SSD and T-SSD act on $\mathcal S$.
For any such invariant $\mathcal S$ and a decomposition
$\mathcal G=\mathcal S\oplus\mathcal T$, the zero extension of the reduced
operator equals $A_{\mathcal S}^{\diamond}$ on $\mathcal G$ if and only if
\[
P_{\mathcal S}^{\diamond}K\mathcal T=\{0\},
\qquad \diamond\in\{\mathrm c,\mu\}.
\]
\end{remark}

Below, $P_{\mathcal S}^{\mathrm c}$ and $A_{\mathcal S}^{\mathrm c}$ denote
the population function-space projector and operator. At finite sample size,
$\widehat{\mathcal S}_{j,m}$ denotes a
sample-value space, $W_{\mathcal S_j,m}$ its compatible coefficient preimage,
$\boldsymbol\Pi_{\mathcal S_j,m}^{\mathrm c}$ the corresponding coefficient
projector, $\mathbf A_{\mathcal S_j,m}^{\mathrm c}$ the empirical matrix, and
$A_{\mathcal S_j,m}^{\mathrm c}$ its induced operator on $\mathcal G$.
}

\section{Algorithms and Convergence}\label{sec:algorithms}

{
For each filtered level $j\geq1$, the population construction first selects a
target $\mathcal S_j$ along the forward--intersection chain and then projects
$K\mathcal G$ onto it. The finite-data construction mirrors these steps. From
the snapshot matrices $\mathbf X$ and $\mathbf Y$, we first construct the
sampled intersections $\widehat{\mathcal S}_{j,m}$. For a selected level, we
then recover $W_{\mathcal S_j,m}$, form the coordinate-orthogonal coefficient
projector $\boldsymbol\Pi_{\mathcal S_j,m}^{\mathrm c}$, and assemble
$\mathbf A_{\mathcal S_j,m}^{\mathrm c}$. Section~\ref{sec:convergence}
relates these finite-data objects to their population counterparts.
}

\subsection{Data-driven algorithms}\label{sec:alg-data-driven}

\paragraph{Finite-data pipeline.}
Recall the data matrices $\mathbf X, \mathbf Y \in \mathbb C^{n \times m}$
from Section~\ref{sec:EDMD}, recording evaluations of the dictionary $\mathbf{g}$ 
and its Koopman images at $m$ sample points.
Once a sampled target has been selected, let
$\boldsymbol\Pi_{\mathcal S,m}^{\mathrm c}\in\mathbb C^{n\times n}$ be its
coordinate-orthogonal projector in the $K\mathbf g$ coefficient space. The empirical Filtered EDMD matrix is
\begin{equation}\label{eq:AFm}
\mathbf A_{\mathcal S,m}^{\mathrm c}
:=\bigl(\boldsymbol\Pi_{\mathcal S,m}^{\mathrm c}\mathbf Y\bigr)
\mathbf X^\dagger.
\end{equation}
It induces an operator $A_{\mathcal S,m}^{\mathrm c}:\mathcal G\to\mathcal G$
by
\begin{equation}\label{eq:empirical-induced-operator}
A_{\mathcal S,m}^{\mathrm c}\langle\mathbf g,\mathbf c\rangle
:=\langle\mathbf A_{\mathcal S,m}^{\mathrm c}\mathbf g,\mathbf c\rangle.
\end{equation}
The matrix $\mathbf Y\mathbf X^\dagger$ is the usual EDMD least-squares
regression. Left multiplication by
$\boldsymbol\Pi_{\mathcal S,m}^{\mathrm c}$ enforces the coordinate-filtering
step.

The pair $(\mathbf{g},\mathbf A_{\mathcal S,m}^{\mathrm c})$ is an approximate
linear representation in the sense of Definition~\ref{def:linear-rep}.

\paragraph{Sampled forward--intersection chain.}
Define the sampling operator
\begin{equation}\label{eq:sampling-operator}
\mathcal E_m f := (f(\mathbf x_1),\ldots,f(\mathbf x_m))
\in\mathbb C^{1\times m},
\qquad 
\mathbf X = \mathcal E_m(\mathbf g),\quad 
\mathbf Y = \mathcal E_m(K\mathbf g).
\end{equation}
In the last two identities, $\mathcal E_m$ is applied componentwise.
The row spaces $\operatorname{row}(\mathbf X)$ and $\operatorname{row}(\mathbf Y)$ 
are the sampled counterparts of $\mathcal G$ and $K\mathcal G$.
The first sample-value intersection is
\begin{equation}\label{eq:sampled-S1}
\widehat{\mathcal S}_{1,m}
:=\operatorname{row}(\mathbf X)\cap \operatorname{row}(\mathbf Y)
\subseteq\mathbb C^{1\times m},
\end{equation}
which can be represented via a coefficient matrix $\mathbf C_1$ satisfying 
$\operatorname{row}(\mathbf C_1\mathbf X)=\widehat{\mathcal S}_{1,m}$.
More generally, $\mathbf C_j$ encodes a sample-value space through
$\operatorname{row}(\mathbf C_j\mathbf X)$, while
$\operatorname{row}(\mathbf C_j\mathbf Y)$ contains the samples of its
forward image. The recursion is
\begin{equation}\label{eq:sampled-induction}
\widehat{\mathcal S}_{j+1,m}
:=\operatorname{row}(\mathbf C_j\mathbf X)
\cap\operatorname{row}(\mathbf C_j\mathbf Y),
\qquad 
\operatorname{row}(\mathbf C_{j+1}\mathbf X)
=\widehat{\mathcal S}_{j+1,m},
\end{equation}
with $\widehat{\mathcal S}_{0,m}:=\operatorname{row}(\mathbf X)$. This produces
the nested chain of sample-value spaces
\[
\widehat{\mathcal S}_{0,m}
\supseteq\widehat{\mathcal S}_{1,m}
\supseteq\widehat{\mathcal S}_{2,m}\supseteq\cdots,
\]
the finite-data analogue of the operator-theoretic chain
$\mathcal G \supseteq \mathcal S_1 \supseteq \mathcal S_2 \supseteq \cdots
\supseteq \mathcal V_{\max}^{\mathcal G}$. Under sampling identifiability,
Proposition~\ref{prop:Em-intersections} makes this correspondence exact.
Algorithm~\ref{alg:iter-intersect} computes the coefficient matrices $\{\mathbf C_j\}$ 
via SVD, providing a numerically stable realization of these intersections.

\begin{algorithm}[!tbh]
\caption{Iterative Row-Space Intersections (Coefficient Form)}
\label{alg:iter-intersect}
\begin{algorithmic}[1]
\Statex \textbf{Input:} $\mathbf X,\mathbf Y\in\mathbb C^{n\times m}$ with $\operatorname{rank}(\mathbf X)=n$
\Statex \textbf{Output:} Coefficient matrices $\{\mathbf C_j\}_{j=1}^{J}$ s.t.\ $\operatorname{row}(\mathbf C_j\mathbf X)=\widehat{\mathcal S}_{j,m}$

\State \textbf{Initialize.} Set $\mathbf C_0 \gets \mathbf I_n$ and $r_0 \gets n$.

\For{$j=0,1,2,\ldots$}
  \State \textbf{Form stacked matrix.} 
  \(
  \mathbf Z_j :=
  \begin{bmatrix}
  \mathbf C_j\mathbf X\\[2pt]
  \mathbf C_j\mathbf Y
  \end{bmatrix}
  \in \mathbb C^{2r_j\times m}.
  \)

  \State \textbf{SVD and rank.} Compute a full SVD 
  $\mathbf Z_j=\mathbf U_j\,\boldsymbol\Sigma_j\,\mathbf V_j^*$ 
  and set $r_j' := \operatorname{rank}(\mathbf Z_j)$.

  \State \textbf{Left nullspace block.} Partition $\mathbf U_j$ conformally as
\[
\mathbf U_j =
\begin{bmatrix}
\mathbf U_j^{(11)} & \mathbf U_j^{(12)} \\[4pt]
\mathbf U_j^{(21)} & \mathbf U_j^{(22)}
\end{bmatrix},
\]
where $\mathbf U_j^{(12)} \in \mathbb C^{r_j \times (2r_j-r_j')}$ satisfies
\(
\operatorname{row}\!\big((\mathbf U_j^{(12)})^\ast\,\mathbf C_j\mathbf X\big)
= \operatorname{row}(\mathbf C_j\mathbf Y) \cap \operatorname{row}(\mathbf C_j\mathbf X).
\)

  \State \textbf{Intersection coefficients.} Compute a rank-revealing QR
  factorization (or economy SVD), and let
  $\mathbf Q_j\in\mathbb C^{r_j\times k_{j+1}}$ have orthonormal columns
  spanning $\operatorname{col}(\mathbf U_j^{(12)})$, where
  $k_{j+1}=\operatorname{rank}(\mathbf U^{(12)}_j)$.
  
  \State \textbf{Update coefficients.} 
  $\mathbf C_{j+1} \gets \mathbf Q_j^*\,\mathbf C_j \in \mathbb C^{k_{j+1}\times n}$.

  \State \textbf{Stopping rule.} If $k_{j+1}=0$ (zero intersection) or
  $k_{j+1}=r_j$ (no rank drop), set $J:=j+1$ and \textbf{break};
  otherwise set $r_{j+1}\gets k_{j+1}$.
\EndFor

\State \Return $\{\mathbf C_j\}_{j=1}^{J}$.
\end{algorithmic}
\end{algorithm}

\begin{remark}[Numerical rank]\label{rmk:numerical-rank}
In finite precision, we implement these intersections using numerical rank.
For the stacked matrix $\mathbf Z_j$, let $\{\sigma_k\}$ be its singular values.  
With tolerance $\varepsilon\in(0,1)$, set
\[
r_j' := \#\{\,\sigma_k:\sigma_k>\tau(\varepsilon)\,\},\qquad
\tau(\varepsilon)=
\begin{cases}
\varepsilon, & \text{absolute cutoff},\\
\varepsilon\cdot\sigma_{\max}, & \text{relative cutoff}.
\end{cases}
\]
Truncation at rank $r_j'$ filters small singular directions; whether a
dynamically relevant direction is retained depends on the tolerance relative
to the corresponding singular-value gap.
\end{remark}

\paragraph{Assembling the Filtered EDMD matrix.}

Given a coefficient matrix $\mathbf C$ encoding a selected sample-value target
$\widehat{\mathcal S}_m:=\operatorname{row}(\mathbf C\mathbf X)$, define its
compatible preimage in the $K\mathbf g$ coefficient space by
\begin{equation}\label{eq:empirical-coordinate-preimage}
W_{\mathcal S,m}
:=\bigl\{\mathbf c\in\mathbb C^n:
\mathbf c^*\mathbf Y\in\widehat{\mathcal S}_m\bigr\}.
\end{equation}
Algorithm~\ref{alg:filtered-edmd} recovers this space as
$\operatorname{col}(\mathbf U^{(12)})$, orthonormalizes it to obtain
$\mathbf Q$, and forms
\[
\boldsymbol\Pi_{\mathcal S,m}^{\mathrm c}:=\mathbf Q\mathbf Q^*.
\]
{
In summary, the finite-data construction follows the pipeline
\[
\widehat{\mathcal S}_m
\longrightarrow W_{\mathcal S,m}
\longrightarrow\boldsymbol\Pi_{\mathcal S,m}^{\mathrm c}
\longrightarrow\mathbf A_{\mathcal S,m}^{\mathrm c}.
\]
}

\begin{algorithm}[!tbh]
\caption{Coordinate-Projected Filtered EDMD Matrix}
\label{alg:filtered-edmd}
\begin{algorithmic}[1]
\Statex \textbf{Input:} Data matrices $\mathbf X,\mathbf Y\in\mathbb C^{n\times m}$ with $\operatorname{rank}(\mathbf X)=n$; 
coefficient matrix $\mathbf C\in\mathbb C^{r\times n}$ with
$\widehat{\mathcal S}_m:=\operatorname{row}(\mathbf C\mathbf X)
\subseteq\operatorname{row}(\mathbf Y)$, as holds for every filtered level
returned by Algorithm~\ref{alg:iter-intersect}.
\Statex \textbf{Output:} Coefficient projector
$\boldsymbol\Pi_{\mathcal S,m}^{\mathrm c}\in\mathbb C^{n\times n}$ and
Filtered EDMD matrix $\mathbf A_{\mathcal S,m}^{\mathrm c}\in\mathbb C^{n\times n}$.

\State \textbf{Stack and factorize.}  
Form
$\mathbf Z := \begin{bmatrix}\mathbf Y\\[2pt]\mathbf C\mathbf X\end{bmatrix}\in\mathbb C^{(n+r)\times m}$,
compute its full SVD $\mathbf Z=\mathbf U\boldsymbol\Sigma\mathbf V^\ast$, 
and set $r':=\operatorname{rank}(\mathbf Z)$.

\State \textbf{Extract intersection coefficients.}  
Partition $\mathbf U$ conformally as
$\mathbf U =
\bigl[\begin{smallmatrix}
\mathbf U^{(11)} & \mathbf U^{(12)} \\
\mathbf U^{(21)} & \mathbf U^{(22)}
\end{smallmatrix}\bigr]$;
the block $\mathbf U^{(12)}\in\mathbb C^{n\times(n+r-r')}$ satisfies
\[
\operatorname{col}(\mathbf U^{(12)})=W_{\mathcal S,m},
\qquad
\operatorname{row}\!\big((\mathbf U^{(12)})^\ast\mathbf Y\big)
=\widehat{\mathcal S}_m.
\]

\State \textbf{Orthogonalize.}
Compute a rank-revealing QR factorization (or economy SVD), and let
$\mathbf Q\in\mathbb C^{n\times k}$ have orthonormal columns spanning
$\operatorname{col}(\mathbf U^{(12)})$, where
$k=\dim(W_{\mathcal S,m})=\operatorname{rank}(\mathbf U^{(12)})$.

\State \textbf{Projector.}  
Set $\boldsymbol\Pi_{\mathcal S,m}^{\mathrm c}:=\mathbf Q\mathbf Q^\ast$.

\State \textbf{Assemble.}
$\mathbf A_{\mathcal S,m}^{\mathrm c}
:=(\boldsymbol\Pi_{\mathcal S,m}^{\mathrm c}\mathbf Y)\mathbf X^\dagger$.

\State \Return
$\boldsymbol\Pi_{\mathcal S,m}^{\mathrm c}$ and
$\mathbf A_{\mathcal S,m}^{\mathrm c}$.
\end{algorithmic}
\end{algorithm}

Algorithm~\ref{alg:filtered-edmd} realizes the coordinate-orthogonal choice
introduced in Section~\ref{sec:canonical-projectors}. The following proposition
shows that $\operatorname{col}(\mathbf Q)$ is the full compatible coefficient
preimage of the selected sample-value target; its maximality is a property of the
recovered range, independent of the projection geometry placed on it.

\begin{proposition}[Maximality of the recovered coefficient subspace]
\label{prop:maximal-projection}
\st{Let\/ $\mathbf C\in\mathbb C^{r\times n}$ encode a sampled subspace 
$\widehat{\mathcal S}_m=\operatorname{row}(\mathbf C\mathbf X)
\subseteq\operatorname{row}(\mathbf Y)$,
and let\/ $\mathbf U^{(12)}\in\mathbb C^{n\times(n+r-r')}$ be the block 
extracted in Step~2 of Algorithm~\ref{alg:filtered-edmd}.
Then
\[
\operatorname{col}(\mathbf U^{(12)})
\;=\;W_{\mathcal S,m}\;=\;
\bigl\{\,\mathbf v\in\mathbb C^n : 
\mathbf v^*\mathbf Y \in \operatorname{row}(\mathbf C\mathbf X)\,\bigr\}.
\]
That is, a coefficient direction $\mathbf v$ is compatible with
$\widehat{\mathcal S}_m$
if and only if\/ $\mathbf v\in\operatorname{col}(\mathbf U^{(12)})$.
Consequently,
$W_{\mathcal S,m}=\operatorname{col}(\mathbf Q)$ is the unique maximal
coefficient subspace whose directions remain in $\widehat{\mathcal S}_m$ after
application to $\mathbf Y$: if $\mathcal W\subseteq\mathbb C^n$ and
$\mathbf v^*\mathbf Y\in\widehat{\mathcal S}_m$ for every
$\mathbf v\in\mathcal W$, then
$\mathcal W\subseteq W_{\mathcal S,m}$.}
\end{proposition}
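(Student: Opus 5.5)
The plan is to characterize the left null space of the stacked matrix $\mathbf Z=\bigl[\begin{smallmatrix}\mathbf Y\\ \mathbf C\mathbf X\end{smallmatrix}\bigr]$ and then project it onto its first $n$ coordinates. From the full SVD $\mathbf Z=\mathbf U\boldsymbol\Sigma\mathbf V^*$ with $r'=\operatorname{rank}(\mathbf Z)$, the last $n+r-r'$ columns of $\mathbf U$ form an orthonormal basis of $\ker(\mathbf Z^*)$. Equivalently, they span the set of stacked vectors $\bigl[\begin{smallmatrix}\mathbf v\\ \mathbf w\end{smallmatrix}\bigr]$ with $\mathbf v^*\mathbf Y+\mathbf w^*\mathbf C\mathbf X=0$. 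With the conformal partition, the top block of these columns is $\mathbf U^{(12)}$. Hence
\[
\operatorname{col}(\mathbf U^{(12)})=\bigl\{\mathbf v:\exists\,\mathbf w\in\mathbb C^r,\ \mathbf v^*\mathbf Y=-\mathbf w^*\mathbf C\mathbf X\bigr\}.
\]

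\textbf{First inclusion.} This step records that $\operatorname{col}(\mathbf U^{(12)})\subseteq W_{\mathcal S,m}$. Any $\mathbf v\in\operatorname{col}(\mathbf U^{(12)})$ is $\mathbf U^{(12)}\mathbf a$ for some $\mathbf a$. Set $\mathbf w=\mathbf U^{(22)}\mathbf a$; then $\bigl[\begin{smallmatrix}\mathbf v\\ \mathbf w\end{smallmatrix}\bigr]$ lies in $\ker(\mathbf Z^*)$. It follows that $\mathbf v^*\mathbf Y=-\mathbf w^*\mathbf C\mathbf X\in\operatorname{row}(\mathbf C\mathbf X)$.

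\textbf{Reverse inclusion.} Suppose $\mathbf v^*\mathbf Y\in\operatorname{row}(\mathbf C\mathbf X)$. Choose $\mathbf u$ with $\mathbf v^*\mathbf Y=\mathbf u^*\mathbf C\mathbf X$, and set $\mathbf w=-\mathbf u$. Then $\bigl[\begin{smallmatrix}\mathbf v\\ \mathbf w\end{smallmatrix}\bigr]\in\ker(\mathbf Z^*)$. Because the trailing columns of $\mathbf U$ span this kernel, the stacked vector equals $\bigl[\begin{smallmatrix}\mathbf U^{(12)}\\ \mathbf U^{(22)}\end{smallmatrix}\bigr]\mathbf a$ for some $\mathbf a$. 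Reading off the top block gives $\mathbf v=\mathbf U^{(12)}\mathbf a$.

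The only care needed here is that the projection of a subspace onto its first coordinates is again a subspace. Its image is therefore exactly the column span of the top block, so no rank issues arise even though $\mathbf U^{(12)}$ may be column-rank deficient. Rank deficiency occurs when $\ker\mathbf Y^*$ and $\ker(\mathbf C\mathbf X)^*$ interact, for instance through the $\mathbf w$ with $\mathbf w^*\mathbf C\mathbf X=0$. This is why Step~3 orthonormalizes via rank-revealing QR to obtain $\mathbf Q$ with $\operatorname{col}(\mathbf Q)=\operatorname{col}(\mathbf U^{(12)})$.

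\textbf{Remaining claims.} The statement $\operatorname{row}((\mathbf U^{(12)})^*\mathbf Y)=\widehat{\mathcal S}_m$ follows from the same characterization. The set $\{\mathbf v^*\mathbf Y:\mathbf v\in W_{\mathcal S,m}\}$ is $\operatorname{row}(\mathbf Y)\cap\operatorname{row}(\mathbf C\mathbf X)$, and this equals $\widehat{\mathcal S}_m$ by the hypothesis $\widehat{\mathcal S}_m\subseteq\operatorname{row}(\mathbf Y)$. Maximality is then immediate: if every $\mathbf v\in\mathcal W$ satisfies $\mathbf v^*\mathbf Y\in\widehat{\mathcal S}_m$, each such $\mathbf v$ lies in $W_{\mathcal S,m}$ by definition. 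Uniqueness holds because $W_{\mathcal S,m}$ is itself such a subspace that contains all others.

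I expect no step to be a genuine obstacle. The one point to state carefully is that the trailing columns of the full SVD span the entire left null space, which follows from $\mathbf U$ being unitary and $\boldsymbol\Sigma$ having exactly $r'$ nonzero entries.
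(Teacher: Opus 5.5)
Your proposal is correct and follows the paper's proof essentially verbatim: both identify the trailing columns $\bigl[\begin{smallmatrix}\mathbf U^{(12)}\\ \mathbf U^{(22)}\end{smallmatrix}\bigr]$ as an orthonormal basis of the left null space of $\mathbf Z$, read off both inclusions from $\mathbf v^*\mathbf Y+\mathbf w^*\mathbf C\mathbf X=\mathbf 0$, and obtain maximality directly from the definition of $W_{\mathcal S,m}$. Your extra check that $\operatorname{row}((\mathbf U^{(12)})^*\mathbf Y)=\widehat{\mathcal S}_m$ is a correct addition. In your side remark, note that $\mathbf U^{(12)}$ is column-rank deficient exactly when $\ker((\mathbf C\mathbf X)^*)\neq\{0\}$; this does not depend on $\ker\mathbf Y^*$.
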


\begin{proof}
Write 
$\mathbf Z=\bigl[\begin{smallmatrix}\mathbf Y\\\mathbf C\mathbf X\end{smallmatrix}\bigr]
\in\mathbb C^{(n+r)\times m}$, 
and let 
$\mathbf U=\bigl[\begin{smallmatrix}
\mathbf U^{(11)}&\mathbf U^{(12)}\\
\mathbf U^{(21)}&\mathbf U^{(22)}
\end{smallmatrix}\bigr]$ 
be the full unitary matrix from the SVD 
$\mathbf Z=\mathbf U\boldsymbol\Sigma\mathbf V^*$, 
partitioned so that the first $n$ rows correspond to $\mathbf Y$ 
and the last $r$ rows to $\mathbf C\mathbf X$.
The columns of 
$\bigl[\begin{smallmatrix}
\mathbf U^{(12)}\\\mathbf U^{(22)}
\end{smallmatrix}\bigr]$
form an orthonormal basis for the left null space of $\mathbf Z$.

\medskip\noindent
\emph{($\subseteq$)}.\;
If $\mathbf v\in\operatorname{col}(\mathbf U^{(12)})$, 
there exists $\mathbf w\in\mathbb C^r$ such that 
$[\mathbf v;\,\mathbf w]$ lies in the left null space of $\mathbf Z$, 
i.e.\ $\mathbf v^*\mathbf Y+\mathbf w^*\mathbf C\mathbf X=\mathbf 0$.
Hence $\mathbf v^*\mathbf Y=-\mathbf w^*\mathbf C\mathbf X
\in\operatorname{row}(\mathbf C\mathbf X)$.

\medskip\noindent
\emph{($\supseteq$)}.\;
Conversely, suppose $\mathbf v\in\mathbb C^n$ satisfies 
$\mathbf v^*\mathbf Y\in\operatorname{row}(\mathbf C\mathbf X)$.
Then there exists $\mathbf w\in\mathbb C^r$ with 
$\mathbf v^*\mathbf Y=-\mathbf w^*\mathbf C\mathbf X$, 
so $[\mathbf v;\,\mathbf w]^*\mathbf Z=\mathbf 0$.
Since the left null space of $\mathbf Z$ is spanned by the columns of 
$\bigl[\begin{smallmatrix}
\mathbf U^{(12)}\\\mathbf U^{(22)}
\end{smallmatrix}\bigr]$,
there exists a coefficient vector $\boldsymbol\alpha$ such that 
$\mathbf v = \mathbf U^{(12)}\boldsymbol\alpha$, 
giving $\mathbf v\in\operatorname{col}(\mathbf U^{(12)})$.

\medskip\noindent
Since $\operatorname{col}(\mathbf Q)=\operatorname{col}(\mathbf U^{(12)})$, 
the space $\operatorname{col}(\mathbf Q)$ is precisely the set of
compatible directions. The matrix
$\boldsymbol\Pi_{\mathcal S,m}^{\mathrm c}=\mathbf Q\mathbf Q^*$ is the
Euclidean-orthogonal projector onto this maximal recovered range.
\end{proof}

This range identification, together with computational simplicity
(a single SVD followed by a rank-revealing orthogonalization),
motivates our use of the coordinate-orthogonal projector.

\begin{remark}[Projection geometry]\label{rmk:projection-choice}
By Proposition~\ref{prop:canonical-projectors-edmd}, the induced
function-space projector $P_{\mathcal S}^{\mathrm c}$ is generally oblique in
$L^2(\mu)$ and agrees with $P_{\mathcal S}^{\mu}|_{K\mathcal G}$ exactly under
the stated Gram-compatibility condition. The algorithms and convergence
results below concern the coordinate choice; implementing the $L^2(\mu)$
choice would additionally require estimating the Gram matrix of $K\mathbf g$.
\end{remark}

\begin{remark}\label{rmk:algorithm-ssd}
{
Unlike SSD, which retains only the terminal invariant space,
Algorithms~\ref{alg:iter-intersect}--\ref{alg:filtered-edmd} retain the sampled
chain and construct a coefficient projector at each filtered level; see
Remark~\ref{Relation_to_SSD}.
}
\end{remark}

\subsection{Convergence theory}\label{sec:convergence}

We now establish that the empirical Filtered EDMD operator converges to its 
population counterpart as the sample size grows.
The analysis proceeds in three stages: convergence at the first intersection, 
extension to intermediate levels, and passage to the terminal invariant subspace.
Throughout, the convergence results concern the empirical coefficient
projector $\boldsymbol\Pi_{\mathcal S_j,m}^{\mathrm c}$ constructed by
Algorithm~\ref{alg:filtered-edmd} and the induced Filtered EDMD operator
$A_{\mathcal S_j,m}^{\mathrm c}$; by
Remark~\ref{rmk:projection-role}, the spectral preservation
guarantee holds independently of this choice.

To certify that sampled intersections faithfully reproduce their operator-theoretic 
counterparts, we impose a mild identifiability condition.

\begin{assumption}[Sampling identifiability]\label{ass:sampling-identifiability}
The sampling operator $\mathcal E_m$ is injective on $\mathcal G+K\mathcal G$, i.e.,
\[
\ker(\mathcal E_m)\cap(\mathcal G+K\mathcal G)=\{0\}.
\]
Equivalently, in the noiseless setting with $m\ge\dim(\mathcal G+K\mathcal G)$,
\begin{equation}\label{eq:rank-condition}
\operatorname{rank}\!\begin{bmatrix}\mathbf Y\\ \mathbf X\end{bmatrix}
=\dim(\mathcal G+K\mathcal G).
\end{equation}
\end{assumption}

\begin{proposition}[Intersections commute with sampling]\label{prop:Em-intersections}
Under Assumption~\ref{ass:sampling-identifiability}, for any 
$\mathcal U,\mathcal V\subset \mathcal G+K\mathcal G$,
\[
\mathcal E_m(\mathcal U\cap \mathcal V) 
= \mathcal E_m(\mathcal U)\cap \mathcal E_m(\mathcal V).
\]
In particular,
$\widehat{\mathcal S}_{1,m}
= \operatorname{row}(\mathbf X)\cap \operatorname{row}(\mathbf Y)
= \mathcal E_m(\mathcal G\cap K\mathcal G)$,
and inductively the sampled chain $\{\widehat{\mathcal S}_{j,m}\}$ mirrors the
operator-theoretic chain $\{\mathcal S_j\}$.
\end{proposition}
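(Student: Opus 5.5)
The core identity is purely linear-algebraic. Set $\mathcal W:=\mathcal G+K\mathcal G$; it is finite-dimensional. By Assumption~\ref{ass:sampling-identifiability}, the restriction $\mathcal E_m|_{\mathcal W}:\mathcal W\to\mathbb C^{1\times m}$ is linear and injective. For an injective linear map $T$ and subspaces $\mathcal U,\mathcal V$ of its domain, the identity $T(\mathcal U\cap\mathcal V)=T\mathcal U\cap T\mathcal V$ holds, and I will prove it by double inclusion.

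The inclusion $\subseteq$ is immediate. For $\supseteq$, take $y\in\mathcal E_m(\mathcal U)\cap\mathcal E_m(\mathcal V)$. Then $y=\mathcal E_m u=\mathcal E_m v$ with $u\in\mathcal U$ and $v\in\mathcal V$. Since $u-v\in\mathcal W\cap\ker\mathcal E_m=\{0\}$, we get $u=v\in\mathcal U\cap\mathcal V$. The equivalence with the rank condition~\eqref{eq:rank-condition} needs only one observation: $\mathcal E_m(\mathcal W)$ is the row space of $\bigl[\begin{smallmatrix}\mathbf Y\\ \mathbf X\end{smallmatrix}\bigr]$, and injectivity on $\mathcal W$ is equivalent to $\dim\mathcal E_m(\mathcal W)=\dim\mathcal W$.

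For the particular case, note that $\operatorname{row}(\mathbf X)=\mathcal E_m(\mathcal G)$ and $\operatorname{row}(\mathbf Y)=\mathcal E_m(K\mathcal G)$. The entries of $\mathbf c^*\mathbf X$ are the samples of $\langle\mathbf g,\mathbf c\rangle$, and $\mathcal E_m$ is linear. Applying the identity with $\mathcal U=\mathcal G$ and $\mathcal V=K\mathcal G$ gives $\widehat{\mathcal S}_{1,m}=\mathcal E_m(\mathcal S_1)$.

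The inductive step carries the main subtlety. The sampled recursion~\eqref{eq:sampled-induction} is phrased through coefficient matrices $\mathbf C_j$, not through $\mathcal E_m(K\mathcal S_j)$. I will therefore prove jointly, by induction on $j$, the following two claims:
\begin{itemize}
\item[(a)] $\operatorname{row}(\mathbf C_j\mathbf X)=\mathcal E_m(\mathcal S_j)$;
\item[(b)] $\operatorname{row}(\mathbf C_j\mathbf Y)=\mathcal E_m(K\mathcal S_j)$.
\end{itemize}
Write $\mathcal C_j:=\{\mathbf C_j^*\mathbf a\}$ for the corresponding coefficient space. By (a) and injectivity of $\mathcal E_m$ on $\mathcal G$, the functions $\langle\mathbf g,\mathbf c\rangle$ with $\mathbf c\in\mathcal C_j$ are exactly $\mathcal S_j$. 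Their Koopman images are $\langle K\mathbf g,\mathbf c\rangle$, whose samples form $\operatorname{row}(\mathbf C_j\mathbf Y)$, so (b) follows from (a). For the step, $\mathcal S_j\subseteq\mathcal G$ and $K\mathcal S_j\subseteq K\mathcal G$, so both lie in $\mathcal W$, and the intersection identity gives
\[
\widehat{\mathcal S}_{j+1,m}=\mathcal E_m(\mathcal S_j)\cap\mathcal E_m(K\mathcal S_j)=\mathcal E_m(\mathcal S_{j+1}).
\]
Any $\mathbf C_{j+1}$ realizing it, such as the one produced by Algorithm~\ref{alg:iter-intersect}, then satisfies (a) at level $j+1$.

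The step I expect to need the most care is keeping the coefficient-level statement (b) consistent with the function-level one. The difficulty is that $\mathbf X$ has full row rank, so coefficients are unique, whereas $\mathbf Y$ may not, since $K|_{\mathcal G}$ need not be injective. This is harmless here: (b) only uses images of the coefficient space under the linear map $\mathbf c\mapsto\mathbf c^*\mathbf Y$, never uniqueness of preimages.
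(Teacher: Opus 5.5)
Your proposal is correct and follows the paper's proof. The double inclusion, where $u-v\in(\mathcal G+K\mathcal G)\cap\ker\mathcal E_m=\{0\}$ forces $u=v$, is exactly the paper's argument, and your joint induction establishing $\operatorname{row}(\mathbf C_j\mathbf X)=\mathcal E_m(\mathcal S_j)$ and $\operatorname{row}(\mathbf C_j\mathbf Y)=\mathcal E_m(K\mathcal S_j)$ usefully supplies the details of the ``inductively'' clause, which the paper asserts without proof.
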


\begin{proof}
The inclusion $\mathcal E_m(\mathcal U\cap \mathcal V) 
\subseteq \mathcal E_m(\mathcal U)\cap \mathcal E_m(\mathcal V)$ is immediate.
For the reverse, let
$\mathbf y\in \mathcal E_m(\mathcal U)\cap \mathcal E_m(\mathcal V)$.
Then $\mathbf y=\mathcal E_m(u)=\mathcal E_m(v)$ with
$u\in\mathcal U$, $v\in\mathcal V$,
so $\mathcal E_m(u-v)=0$ and $u-v\in \mathcal G+K\mathcal G$.
By Assumption~\ref{ass:sampling-identifiability}, $u=v\in \mathcal U\cap\mathcal V$, 
hence $\mathbf y\in \mathcal E_m(\mathcal U\cap \mathcal V)$.
\end{proof}

The following theorem establishes convergence at the first nontrivial 
intersection ($j=1$).

\begin{theorem}[Convergence of the empirical Filtered EDMD operator]
\label{thm:convergence}
At the first filtered level, set
\begin{equation}\label{eq:W-S1m-def}
W_{\mathcal S_1,m}
:=\bigl\{\mathbf c\in\mathbb C^n:
\mathbf c^*\mathbf Y\in\widehat{\mathcal S}_{1,m}\bigr\},
\qquad
\boldsymbol\Pi_{\mathcal S_1,m}^{\mathrm c}
:=\operatorname{proj}_{W_{\mathcal S_1,m}},
\end{equation}
where $\operatorname{proj}$ denotes Euclidean-orthogonal projection in
$\mathbb C^n$, and define
\begin{equation}\label{eq:AS-empirical}
\mathbf A_{\mathcal S_1,m}^{\mathrm c}
:=\bigl(\boldsymbol\Pi_{\mathcal S_1,m}^{\mathrm c}\mathbf Y\bigr)
\mathbf X^\dagger.
\end{equation}
Let $A_{\mathcal S_1,m}^{\mathrm c}$ be the induced operator
from~\eqref{eq:empirical-induced-operator}. For i.i.d.\ samples
$\mathbf x_i\sim\mu$, suppose that $\{g_j\}_{j=1}^n$ is
$L^2(\mu)$-linearly independent, namely
\[
\|\langle\mathbf g,\mathbf c\rangle\|_\mu>0
\qquad\text{for every }\mathbf c\ne\mathbf0.
\]
Then, almost surely,
\[
\bigl\|A_{\mathcal S_1,m}^{\mathrm c}
-A_{\mathcal S_1}^{\mathrm c}\bigr\|_{\mathcal L(\mathcal G)}
\longrightarrow0,
\qquad
A_{\mathcal S_1}^{\mathrm c}
=P_{\mathcal S_1}^{\mathrm c}\circ(K|_{\mathcal G}),
\]
where the operator norm is induced by the $L^2(\mu)$ norm on the
finite-dimensional space $\mathcal G$.
\end{theorem}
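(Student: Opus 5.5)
The plan is to show something stronger than convergence. Almost surely, for all sufficiently large $m$, the empirical matrix $\mathbf A_{\mathcal S_1,m}^{\mathrm c}$ \emph{coincides exactly} with the matrix representing $A_{\mathcal S_1}^{\mathrm c}$. The argument is to reduce everything to Assumption~\ref{ass:sampling-identifiability}, show that this assumption holds eventually almost surely, and then read off $W_{\mathcal S_1,m}$ and $\mathbf A_{\mathcal S_1,m}^{\mathrm c}$ exactly. Because $K\mathbf g$ need not be a basis here, I will use the coefficient-space extension of $P_{\mathcal S}^{\mathrm c}$ described after the canonical projectors, with $W_{\mathcal S_1}:=\Psi_K^{-1}(\mathcal S_1)$. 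This construction is valid without injectivity of $K|_{\mathcal G}$.

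\textbf{Step 1 (eventual identifiability).} Let $h_1,\dots,h_p$ be an $L^2(\mu)$-basis of the finite-dimensional space $\mathcal G+K\mathcal G\subset L^2(\mu)$, extending a basis of $\mathcal G$. Form the empirical Gram matrix $\widehat{\mathbf H}_m:=\frac1m\sum_{i=1}^m \mathbf h(\mathbf x_i)\mathbf h(\mathbf x_i)^*$. Its entries $h_k\overline{h_l}$ are integrable by Cauchy--Schwarz. The strong law of large numbers then gives $\widehat{\mathbf H}_m\to\mathbf H$ almost surely, where $\mathbf H$ is the $L^2(\mu)$ Gram matrix, which is positive definite. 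Hence almost surely there is an $m_0$ with $\widehat{\mathbf H}_m\succ0$ for all $m\ge m_0$. This says exactly that $\mathcal E_m$ is injective on $\mathcal G+K\mathcal G$, which is Assumption~\ref{ass:sampling-identifiability}. In particular $\operatorname{rank}\mathbf X=n$, because $\{g_j\}$ is $L^2(\mu)$-independent. I expect this step to be the only genuinely probabilistic one and the main obstacle. The delicate points are the null-set bookkeeping, since one exceptional set must cover all $m$ simultaneously, and the observation that $L^2(\mu)$-independence of $\mathcal G$ alone is not what is needed; what is needed is working with a basis of $\mathcal G+K\mathcal G$.

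\textbf{Step 2 (exact recovery of the coefficient subspace).} Fix $m\ge m_0$. By the coefficient convention, $\mathbf c^*\mathbf Y=\mathcal E_m(\Psi_K\mathbf c)$. By Proposition~\ref{prop:Em-intersections}, $\widehat{\mathcal S}_{1,m}=\mathcal E_m(\mathcal S_1)$. Therefore $\mathbf c\in W_{\mathcal S_1,m}$ if and only if $\mathcal E_m(\Psi_K\mathbf c)\in\mathcal E_m(\mathcal S_1)$. Both $\Psi_K\mathbf c$ and $\mathcal S_1$ lie in $\mathcal G+K\mathcal G$, so injectivity turns this into $\Psi_K\mathbf c\in\mathcal S_1$. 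Hence $W_{\mathcal S_1,m}=W_{\mathcal S_1}$, and so $\boldsymbol\Pi_{\mathcal S_1,m}^{\mathrm c}=\boldsymbol\Pi_{\mathcal S_1}^{\mathrm c}=:\boldsymbol\Pi$ for every $m\ge m_0$.

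\textbf{Step 3 (exact recovery of the matrix).} Row $i$ of the function vector $\boldsymbol\Pi K\mathbf g$ is $\sum_k\Pi_{ik}Kg_k=\Psi_K(\overline{\boldsymbol\Pi_{i,:}}^{\,\top})$. This is $\Psi_K$ of a vector in $\operatorname{ran}\boldsymbol\Pi=W_{\mathcal S_1}$, so the row lies in $\mathcal S_1\subseteq\mathcal G$. Consequently $\boldsymbol\Pi K\mathbf g=\mathbf B\mathbf g$ for a unique $\mathbf B\in\mathbb C^{n\times n}$, unique because $\{g_j\}$ is independent. Applying $\mathcal E_m$ gives $\boldsymbol\Pi\mathbf Y=\mathbf B\mathbf X$. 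Since $\mathbf X$ has full row rank, $\mathbf X\mathbf X^\dagger=\mathbf I_n$, and so $\mathbf A_{\mathcal S_1,m}^{\mathrm c}=\mathbf B$.

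It remains to check that $\mathbf B$ represents the population operator. Because $\boldsymbol\Pi$ is Hermitian,
\[
P_{\mathcal S_1}^{\mathrm c}K\langle\mathbf g,\mathbf c\rangle=\Psi_K\boldsymbol\Pi\mathbf c=\langle\boldsymbol\Pi K\mathbf g,\mathbf c\rangle=\langle\mathbf B\mathbf g,\mathbf c\rangle .
\]
Hence $A_{\mathcal S_1,m}^{\mathrm c}=A_{\mathcal S_1}^{\mathrm c}$ for all $m\ge m_0$, and the operator-norm difference is eventually zero almost surely. This yields the theorem in any norm on $\mathcal L(\mathcal G)$.
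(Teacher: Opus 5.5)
Your proof is correct, and it takes a different route from the paper's. The paper proves the theorem through Proposition~\ref{prop:sample-projector-convergence}. That proof has four steps. First, it writes $W_{\mathcal S_1,m}=\ker(\mathbf Y\boldsymbol\Pi_{\mathbf X}^{\perp}\mathbf Y^*)$. Second, it shows that this Schur complement, normalized by $m$, converges to $\mathbf R=\mathbf H_K-\mathbf B\mathbf G^{-1}\mathbf B^*$, which is the Gram matrix of the EDMD residual $(I-P_{\mathcal G}^{\mu})K$. Third, it proves $\ker\mathbf R=W_{\mathcal S_1}$ by a Pythagorean argument. Fourth, it shows $W_{\mathcal S_1}\subseteq\ker\mathbf R_m$ for every $m$, then uses the spectral gap of $\mathbf R$ on $W_{\mathcal S_1}^{\perp}$ to exclude extra kernel directions for large $m$. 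The matrix limit $\boldsymbol\Pi_{\mathcal S_1}^{\mathrm c}\mathbf B\mathbf G^{-1}$ then follows from the law of large numbers for $\frac1m\mathbf Y\mathbf X^*$ and $\frac1m\mathbf X\mathbf X^*$.

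You never use $\mathbf R$. Instead, you derive Assumption~\ref{ass:sampling-identifiability} eventually almost surely from the empirical Gram matrix of a basis of $\mathcal G+K\mathcal G$. You then argue as the paper does in Remark~\ref{rmk:full-chain-convergence}, where that assumption is imposed rather than proved. Finally, you replace the limit by the exact identity $\boldsymbol\Pi\mathbf Y=\mathbf B\mathbf X$. Note a notation clash: your $\mathbf B$ is the paper's $\boldsymbol\Pi_{\mathcal S_1}^{\mathrm c}\mathbf B\mathbf G^{-1}$, since the paper reserves $\mathbf B$ for the cross-Gram matrix.

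Your route has three advantages. It is more elementary. It gives eventual equality $A_{\mathcal S_1,m}^{\mathrm c}=A_{\mathcal S_1}^{\mathrm c}$ rather than mere convergence. It also shows that the hypothesis of Remark~\ref{rmk:full-chain-convergence} holds automatically for a nested i.i.d.\ sequence under $L^2(\mu)$-independence, so the same argument extends to every filtered level.

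The paper's route has its own advantages. It needs only $\mathbf X$, $\mathbf Y$ and the invertibility of $\mathbf G$, with no basis of $\mathcal G+K\mathcal G$. It ties the compatible coefficients to the EDMD residual. It also produces the margin $\gamma$ separating compatible from incompatible directions. That margin is a natural quantity for analyzing numerical-rank truncation, and for extensions to noisy data where exact identities like $\boldsymbol\Pi\mathbf Y=\mathbf B\mathbf X$ fail.

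The null-set issue you flag is routine. The $\mu$-a.e.\ relations you use are finitely many: each $Kg_j$ expressed in the basis $h_1,\dots,h_p$, and $\boldsymbol\Pi K\mathbf g=\mathbf B\mathbf g$ with the deterministic population projector. So one probability-one event, on which no sample point lands in any of their exceptional sets, serves for all $m$ at once. This is the same device as in Step~4 of the proof of Proposition~\ref{prop:sample-projector-convergence}.
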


\begin{proof}
Let
\[
\mathbf G_{ij}:=\langle g_i,g_j\rangle_\mu,
\qquad
\mathbf B_{ij}:=\langle Kg_i,g_j\rangle_\mu.
\]
The strong law of large numbers and the assumed linear independence give,
almost surely,
\[
\frac1m\mathbf X\mathbf X^*\longrightarrow\mathbf G\succ0,
\qquad
\frac1m\mathbf Y\mathbf X^*\longrightarrow\mathbf B.
\]
Thus $\mathbf X$ has full row rank for all sufficiently large $m$, and
\[
\mathbf A_{\mathcal S_1,m}^{\mathrm c}
=\boldsymbol\Pi_{\mathcal S_1,m}^{\mathrm c}
 \Bigl(\frac1m\mathbf Y\mathbf X^*\Bigr)
 \Bigl(\frac1m\mathbf X\mathbf X^*\Bigr)^{-1}.
\]
By Proposition~\ref{prop:sample-projector-convergence},
$\boldsymbol\Pi_{\mathcal S_1,m}^{\mathrm c}$ converges almost surely to
$\boldsymbol\Pi_{\mathcal S_1}^{\mathrm c}$, the Euclidean projector onto
$W_{\mathcal S_1}=\Psi_K^{-1}(\mathcal S_1)$. Consequently,
\[
\mathbf A_{\mathcal S_1,m}^{\mathrm c}
\longrightarrow
\boldsymbol\Pi_{\mathcal S_1}^{\mathrm c}\mathbf B\mathbf G^{-1}.
\]
The limiting matrix represents $P_{\mathcal S_1}^{\mathrm c}K|_{\mathcal G}$
in the dictionary $\mathbf g$: applying $P_{\mathcal S_1}^{\mathrm c}$
componentwise gives
$P_{\mathcal S_1}^{\mathrm c}(K\mathbf g)
=\boldsymbol\Pi_{\mathcal S_1}^{\mathrm c}K\mathbf g$, whose entries lie in
$\mathcal S_1\subseteq\mathcal G$, and its coefficient matrix in
$\mathbf g$ is
$\boldsymbol\Pi_{\mathcal S_1}^{\mathrm c}\mathbf B\mathbf G^{-1}$.
Matrix convergence and operator convergence are equivalent on the
finite-dimensional space $\mathcal G$, which proves the claim.
\end{proof}

Theorem~\ref{thm:convergence} treats $j=1$; the same population--sample
correspondence extends along the chain as follows.

\begin{remark}[Convergence along the full chain]\label{rmk:full-chain-convergence}
For a nested i.i.d.\ sample sequence, suppose that
Assumption~\ref{ass:sampling-identifiability} holds almost surely for all
sufficiently large $m$. Then
for each $j \geq 1$ the empirical coordinate-orthogonal Filtered EDMD operator
$A_{\mathcal S_j,m}^{\mathrm c}$ converges almost surely to
$A_{\mathcal S_j}^{\mathrm c}$ as $m\to\infty$.
Indeed, Proposition~\ref{prop:Em-intersections} gives
$\widehat{\mathcal S}_{j,m}=\mathcal E_m(\mathcal S_j)$, and injectivity of
$\mathcal E_m$ on $\mathcal G+K\mathcal G$ then gives
\[
W_{\mathcal S_j,m}=\Psi_K^{-1}(\mathcal S_j)=W_{\mathcal S_j}.
\]
Thus the empirical and population coefficient projectors agree eventually,
and the matrix argument in Theorem~\ref{thm:convergence} applies at every
fixed filtered level, including the terminal space
$\mathcal S_\ast=\mathcal V_{\max}^{\mathcal G}$.
The unfiltered level is ordinary EDMD and is treated separately:
$A_{E,m}\to A_E$. Collecting these, the consistent empirical family is
\[
\{A_{E,m}\}\cup
\{A_{\mathcal S_j,m}^{\mathrm c}:j\geq1\},
\]
from unfiltered EDMD through the intermediate coordinate-orthogonal levels to
the maximal-invariant restriction.
\end{remark}

\section{Numerical experiments}\label{sec:experiments}

We evaluate Filtered EDMD on three dynamical systems, chosen to 
illustrate the main challenges practitioners face when selecting 
observable dictionaries heuristically.%
\footnote{We have chosen low-dimensional examples so that eigenvalues, 
eigenfunctions, and matrix structures can be inspected directly.  
The method itself has no inherent dimensional restriction: 
Algorithm~\ref{alg:iter-intersect} operates on the $n\times m$ 
data matrices and scales as described in Remark~\ref{rmk:complexity}.}

\begin{table}[h]
\centering
\begin{tabular}{lll}
\toprule
System & Dictionary type & Challenge \\
\midrule
Kronecker flow on $\mathbb T^2$ & Eigenfunctions $+$ redundant terms & Redundant observables \\
Polynomial nonlinear system & Truncated polynomial basis & Insufficient observables \\
Van der Pol oscillator & Polynomial--trigonometric basis & Uninformative observables \\
\bottomrule
\end{tabular}
\caption{Test systems and the dictionary challenges they illustrate.}
\label{tab:test-systems}
\end{table}

The examples are ordered by increasing difficulty, matching the two structural
regimes of our theory. In the first two---the Kronecker flow
(Section~\ref{sec:exp-kronecker}) and the polynomial system
(Section~\ref{sec:exp-polynomial})---the dictionary $\mathcal G$ is not
Koopman-invariant, yet the one-step intersection $\mathcal G\cap K\mathcal G$ is
invariant and equals the maximal invariant subspace
$\mathcal V_{\max}^{\mathcal G}$: the Kronecker dictionary contains all relevant
eigenfunctions and the task is to separate them from redundant terms, while the
polynomial dictionary captures only some and the task is stability under limited
data. By Proposition~\ref{prop:canonical-projectors-edmd} and
Remark~\ref{Relation_to_SSD}, coordinate- and $L^2(\mu)$-Filtered EDMD, EDMD on
the reduced dictionary, and SSD/T-SSD then coincide at the population level on
this block ($=K|_{\mathcal S}$); all methods retain the genuine eigenvalues, so
the comparison isolates \emph{finite-sample} behavior---matrix structure and
stability under resampling---where they differ only through their
subspace-selection criteria (intersection, invariance, forward--backward
consistency, or residual thresholding) and their sensitivity to sampling error
and tolerance. In the third and most demanding example---the Van der Pol
oscillator (Section~\ref{sec:exp-vdp})---the dictionary contains no nontrivial
Koopman eigenfunctions and the maximal invariant subspace is trivial, so the
invariance- and residual-based methods have nothing to recover; yet the
coordinate-projected Filtered EDMD identifies candidates aligned with the
unstable-equilibrium Koopman lattice, whereas the other algorithms do not
reveal a clear lattice structure.

The benchmarks include EDMD, T-SSD, RFB-EDMD, ResDMD, and Filtered EDMD
(F-EDMD in tables). Exact SSD is not plotted
separately; its population relation to the terminal invariant space is given in
Remark~\ref{Relation_to_SSD}. For matrix heatmaps, reduced T-SSD and RFB-EDMD
operators are lifted to the original dictionary coordinates, while the ResDMD
panel is reconstructed from the retained EDMD eigenpairs.
\subsection{Kronecker flow on the torus: redundant observables}
\label{sec:exp-kronecker}

{
This example tests \emph{operator matrix structure} when the dictionary 
contains all relevant Koopman eigenfunctions alongside redundant, 
non-invariant terms.  
Since every method retains the $10$ genuine eigenvalues in this setting, the
comparison turns on whether spurious content remains and how faithfully the
resulting matrix reproduces the Koopman block structure.
}

\paragraph{Setup.}
Consider the Kronecker flow on $\mathbb{T}^2$ defined by 
$(\dot\theta_1,\dot\theta_2)=(1,\alpha)$, embedded in $\mathbb C$
via $z(\theta)=e^{i\theta_1}+e^{i\theta_2}$.  
When $\alpha\in\mathbb{Q}$ the trajectory is periodic; when 
$\alpha\notin\mathbb{Q}$ it fills the torus densely 
(Fig.~\ref{fig:kronecker-flow}).  
In the experiments below, we fix $\alpha=\sqrt{2}$ and generate discrete-time data by sampling the continuous-time flow with step size $\Delta t=0.2$.
The Koopman eigenfunctions are Fourier modes 
$\psi_{k,\ell}(\theta)=e^{i(k\theta_1+\ell\theta_2)}$ with 
discrete-time eigenvalues $\mu_{k,\ell}=e^{i(k+\ell\alpha)\Delta t}$.  
We construct a dictionary $\mathbf{g}$ of $n=18$ observables: 
$10$ Koopman eigenfunctions $\{\psi_j\}_{j=1}^{10}$ and 
$8$ additional non-invariant terms $\{r_k\}_{k=1}^{8}$.  
The maximal Koopman-invariant subspace in 
$\operatorname{span}(\mathbf{g})$ is the $10$-dimensional 
eigenfunction span $\mathcal{V}_{\max}^{\mathcal{G}}$.
By the standard Fourier-basis representation of the Koopman operator for rotations,
as illustrated for circle rotations in~\cite{Mezic2022Numerical}, the restriction of $K$ to $\mathcal{V}_{\max}^{\mathcal{G}}$ is exactly represented by a diagonal matrix whose diagonal entries are the selected Koopman eigenvalues.
We collect $m=50$ snapshot pairs with initial conditions drawn 
uniformly on $\mathbb{T}^2$.

\begin{figure}[!htbp]
    \centering
    \includegraphics[width=0.95\textwidth]{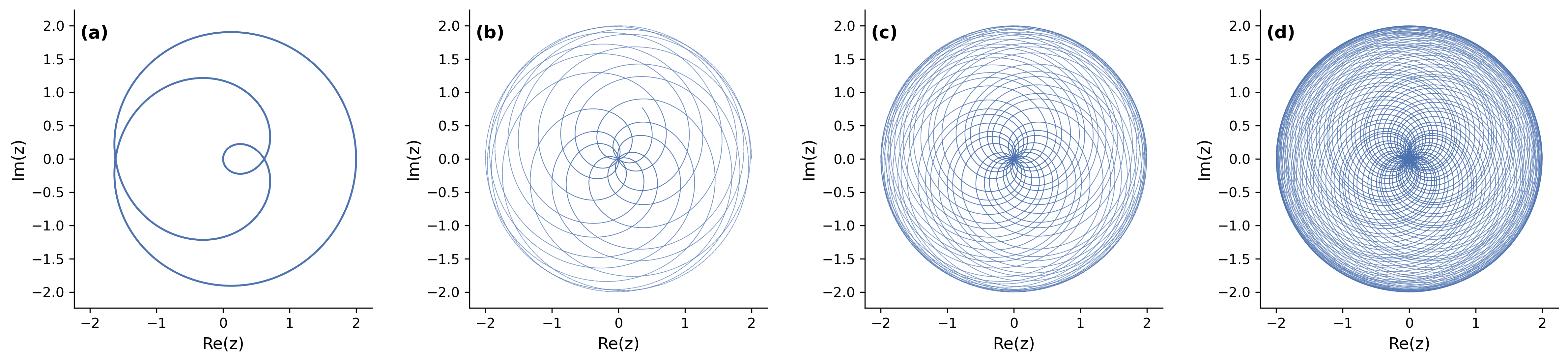}
    \caption{Kronecker flow trajectories in $\mathbb C$ via
    $z(\theta)=e^{i\theta}+e^{i\alpha\theta}$: 
    (a)~rational $\alpha$ (periodic); 
    (b)--(d)~irrational $\alpha$ with increasing time 
    (trajectory fills the curve densely).}
    \label{fig:kronecker-flow}
\end{figure}

{
\paragraph{Spectral recovery and matrix structure.}
The EDMD matrix is $18\times 18$ with full rank; its spectrum 
contains the $10$ correct Koopman eigenvalues plus $8$ spurious ones 
(Fig.~\ref{fig:torus-eigs}).  
Filtered EDMD produces a rank-$10$ matrix recovering the $10$ eigenvalues to
numerical precision; the forward--intersection chain terminates after
one step since $\mathcal{G}\cap K\mathcal{G}$ already coincides 
with $\mathcal{V}_{\max}^{\mathcal{G}}$.  
T-SSD, RFB-EDMD, and ResDMD also retain the $10$ genuine eigenvalues but differ
in their reconstructed matrix structure.
}

\begin{figure}[!htbp]
    \centering
    \includegraphics[width=0.9\textwidth]{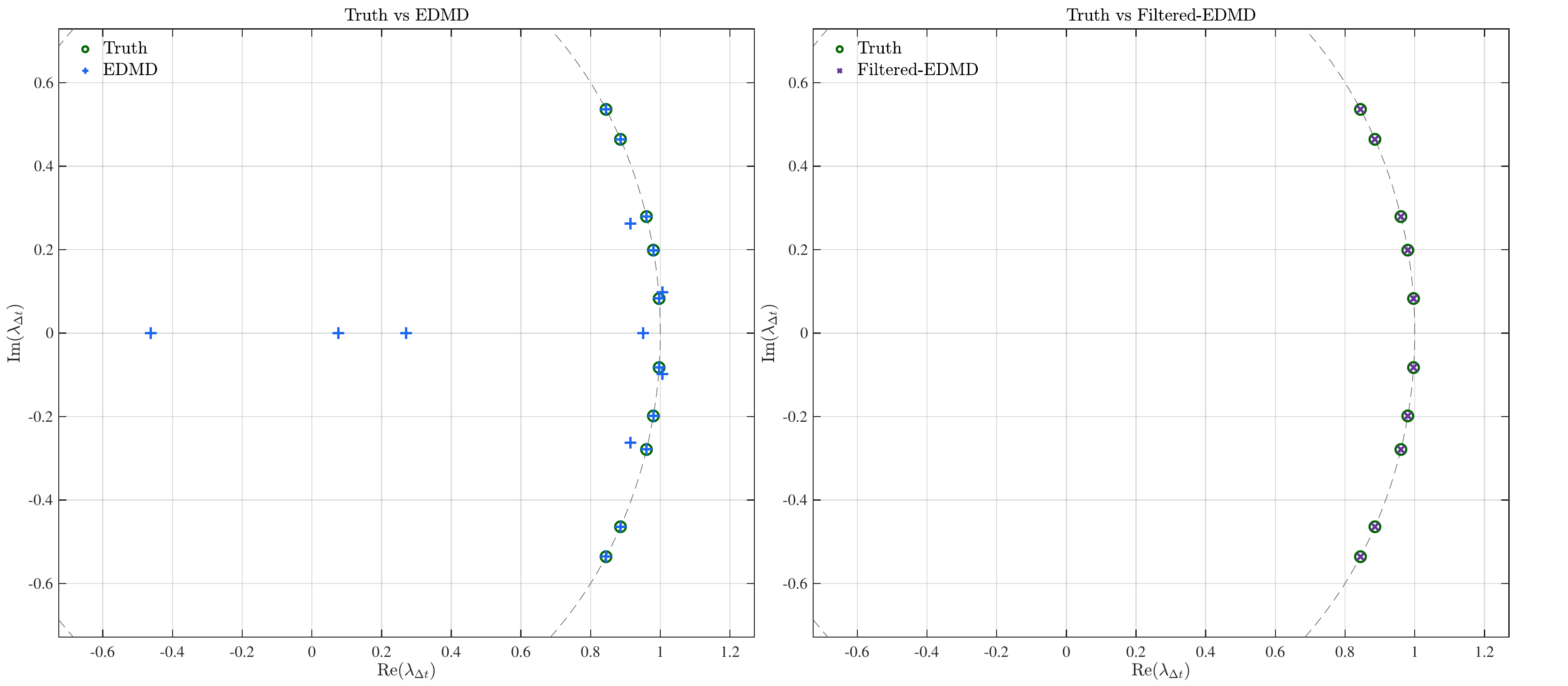}
    \caption{{Spectra of EDMD (left) and Filtered EDMD (right) on 
    the Kronecker flow.  
    EDMD produces $8$ spurious eigenvalues (red); 
    Filtered EDMD recovers the $10$ correct eigenvalues to numerical precision.}}
    \label{fig:torus-eigs}
\end{figure}

{
Figure~\ref{fig:torus-heatmaps} shows heatmaps of the absolute values
of the matrix representations described above.
All five data-driven representations capture the dominant $10\times 10$ diagonal block
corresponding to the {$10$-dimensional 
eigenfunction span $\mathcal{V}_{\max}^{\mathcal{G}}$.}
However, EDMD, ResDMD, and T-SSD exhibit substantial leakage into 
the complementary directions, whereas Filtered EDMD and RFB-EDMD 
produce clean block-diagonal matrices matching the theoretical 
Koopman representation.
}

\begin{figure}[!htbp]
    \centering
    \includegraphics[width=0.9\textwidth]{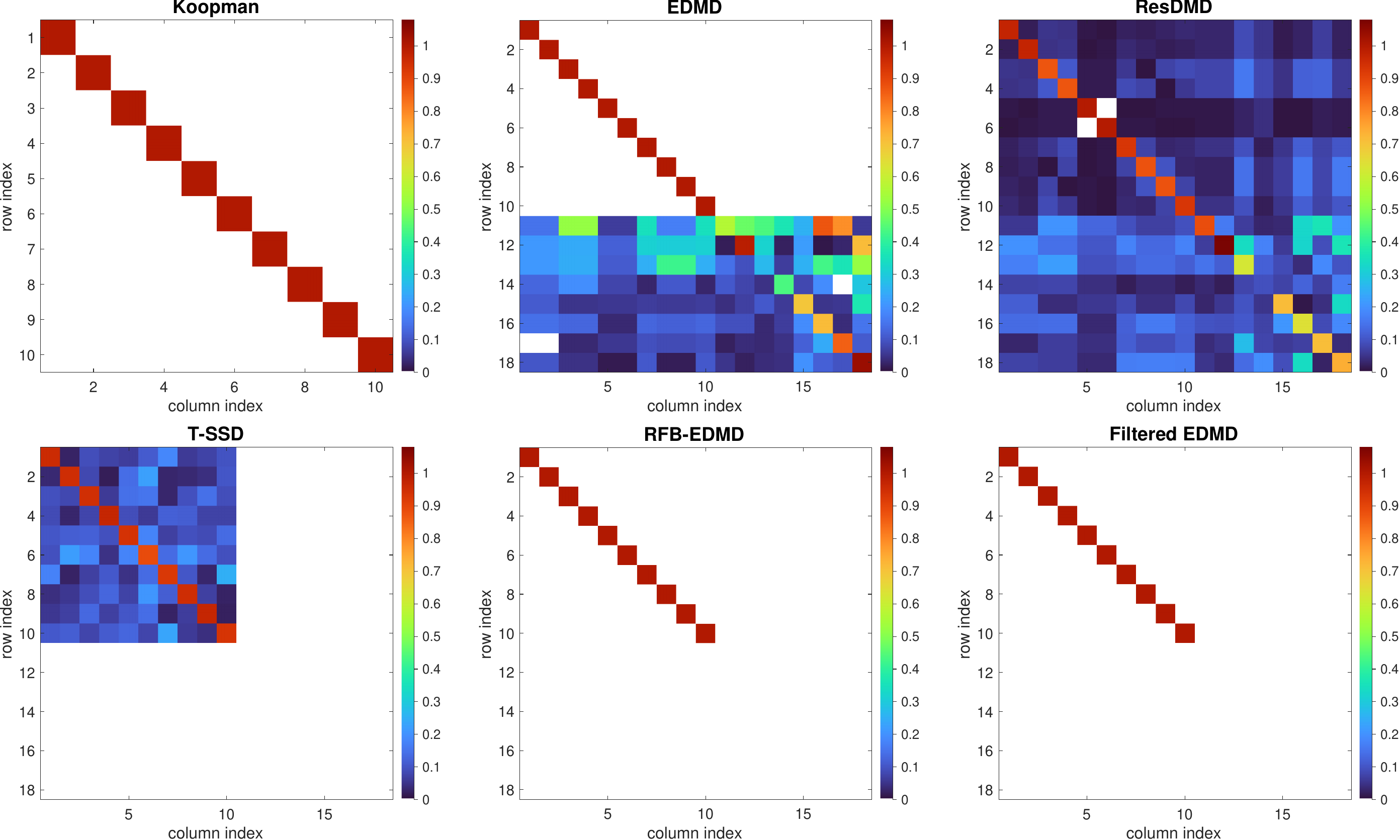}
    \caption{{Matrix heatmaps (absolute value) for the reference Koopman matrix {with respect to \(\{\psi_j\}_{j=1}^{10}\)}
    and the five data-driven methods on the Kronecker flow ($n=18$, $m=50$).
    Among the data-driven methods, only Filtered EDMD and RFB-EDMD reproduce the clean
    block-diagonal structure of the Koopman operator.}}
    \label{fig:torus-heatmaps}
\end{figure}

{
\paragraph{Tolerance sensitivity.}
Figure~\ref{fig:torus-rank1} shows the reported matrix rank as a function
of the tolerance parameter $\varepsilon$ 
(cf.\ Remark~\ref{rmk:numerical-rank}) for the four methods that 
admit one.  
Over the tested tolerance ranges, every method attains rank $10$.
T-SSD reaches rank $10$ at a larger tolerance than RFB-EDMD, consistent with
\citep{RFB-EDMD}.
Filtered EDMD exhibits a sharp transition, stabilizing at rank $10$ 
once $\varepsilon$ passes below a critical threshold.  
Figure~\ref{fig:torus-rank2} illustrates three intermediate stages 
(ranks $13$, $11$, $10$), showing the progressive removal of 
spurious off-diagonal couplings.
}

\begin{figure}[!htbp]
    \centering
    \includegraphics[width=0.9\textwidth]{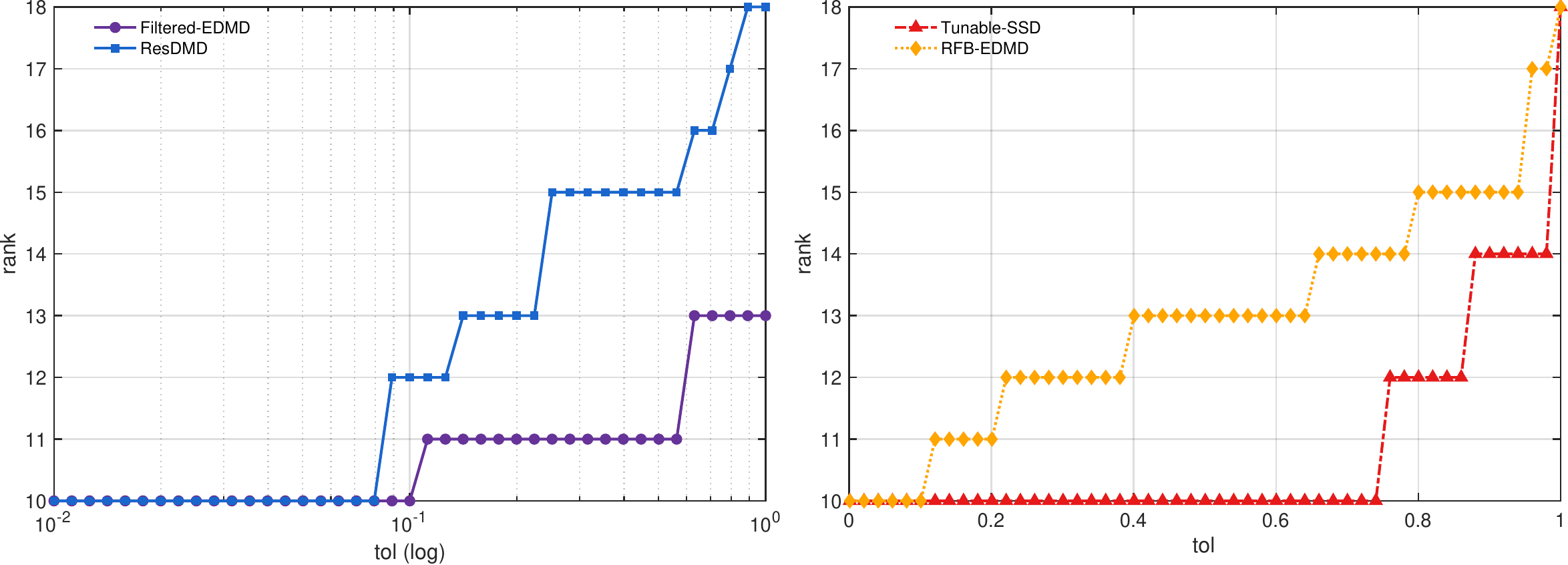}
    \caption{{Reported matrix rank versus tolerance $\varepsilon$ for the
    four parameterized methods (Kronecker flow).}}
    \label{fig:torus-rank1}
\end{figure}

\begin{figure}[!htbp]
    \centering
    \includegraphics[width=0.9\textwidth]{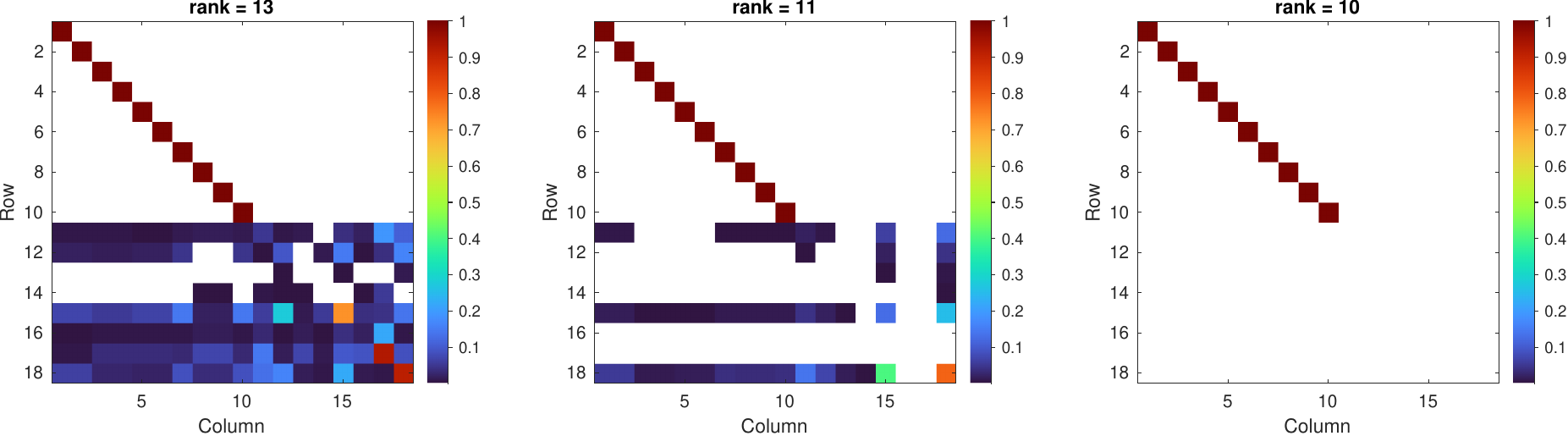}
    \caption{Filtered EDMD matrices at three tolerance levels, 
    showing progressive removal of spurious couplings 
    until the clean rank-$10$ structure is recovered.}
    \label{fig:torus-rank2}
\end{figure}

\subsection{Polynomial nonlinear system: insufficient observables}
\label{sec:exp-polynomial}

This example tests \emph{spectral recovery and algorithmic stability} 
when the dictionary captures only a subset of the Koopman 
eigenfunctions---a common situation when the dictionary is chosen 
by truncating a general basis.

\paragraph{Setup.}
We consider a 4-dimensional polynomial ODE admitting an exact 
finite-dimensional Koopman representation \parencite{Koopman_Form}:
\[
\begin{aligned}
\dot x_1 &= a_1x_1,  &
\dot x_2 &= a_2x_2+\alpha_1 x_1^3,  \\
\dot x_3 &= a_3x_3+\alpha_2 x_1x_2+\alpha_3 x_2^2,  &
\dot x_4 &= a_4x_4+\alpha_4 x_1x_2x_3.
\end{aligned}
\]
Throughout this subsection, we fix
$(a_1,a_2,a_3,a_4)=0.1(\log 17,\log 15,\log 3,i\log 2)$
and $\alpha_j=-0.2, \forall j$.

A minimal faithful linear representation uses $19$ monomial 
observables and an upper-triangular matrix whose 
diagonal entries are the Koopman eigenvalues $\{\lambda_i\}_{i=1}^{19}$ 
(see \citep{Koopman_Form} for details).  
Some dictionary elements (e.g., $x_1$, $x_1^3$) are already eigenfunctions; 
others (e.g., $x_2$, $x_3$) become eigenfunctions only when combined 
with upstream monomials that cancel coupling terms.

We deliberately restrict to the first $15$ of the $19$ observables, so 
that $\mathcal{G}=\operatorname{span}\{g_1,\dots,g_{15}\}$ is not 
Koopman-invariant.  
The truncated span still contains $9$ eigenfunctions (those whose 
construction requires only monomials within the first $15$), but the 
eigenfunction associated with $\lambda_4$ lies outside $\mathcal{G}$ 
since it requires higher-order monomials.  
The resulting representation is therefore \emph{semiconjugate} to the 
full faithful representation: it captures part of the spectrum and can 
reconstruct $x_1,x_2,x_3$ but not $x_4$.  
We generate $m=25$ snapshot pairs from initial states sampled uniformly 
in $[-1,1]^4$ with discretization step $\Delta t=0.1$.

{
\paragraph{Spectral recovery.}
The EDMD spectrum (Fig.~\ref{fig:poly-eigs}) recovers the $9$ true 
eigenvalues but is contaminated by $6$ spurious ones, including two 
that lie deceptively close to the exact spectrum.  
Filtered EDMD eliminates all detected spurious eigenvalues and matches the
$9$ Koopman eigenvalues to numerical precision; the chain terminates after one step.
With the selected parameters, T-SSD and ResDMD also retain the $9$ eigenvalues
on this dataset. RFB-EDMD does not remove all spurious eigenvalues at $m=25$,
but succeeds once the sample size reaches $m=100$.
}

\begin{figure}[!htbp]
    \centering
    \includegraphics[width=0.9\textwidth]{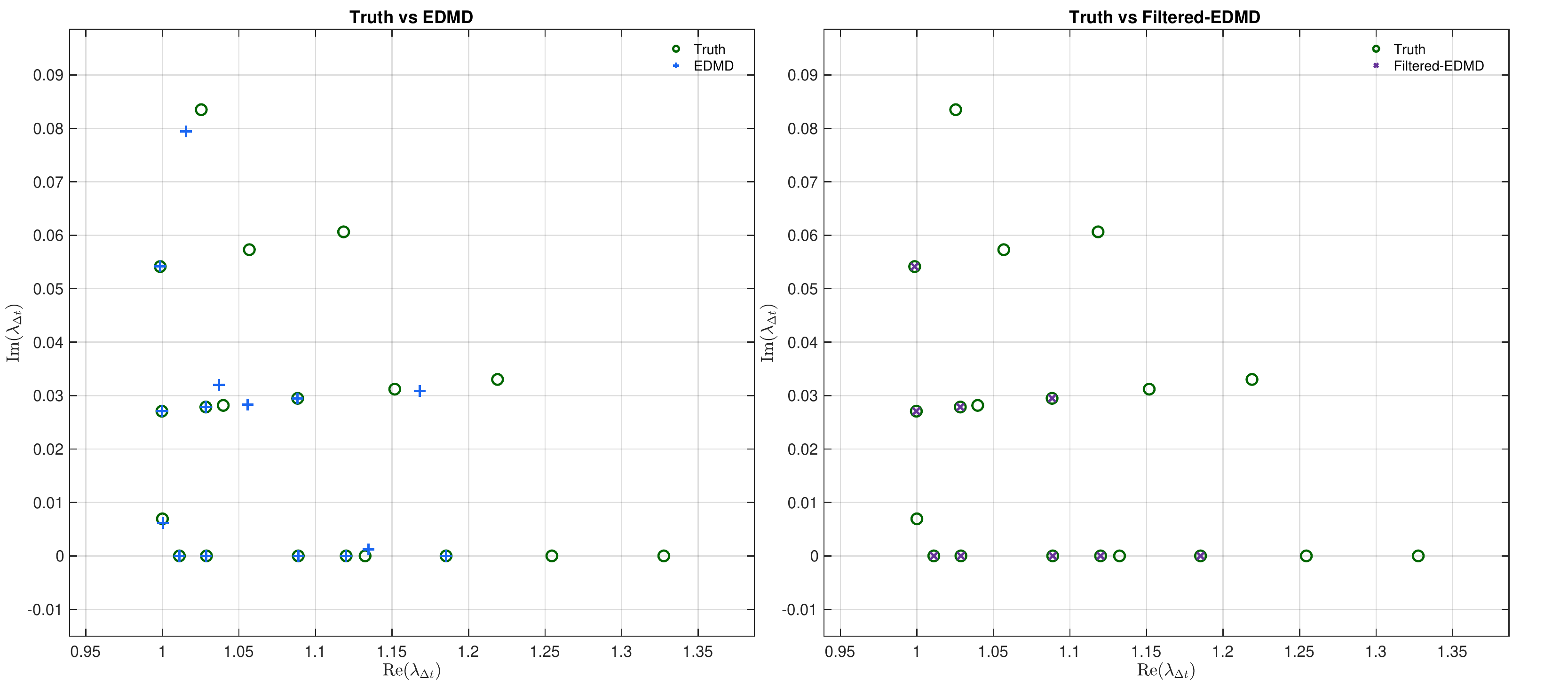}
    \caption{{Spectra of EDMD and Filtered EDMD on the polynomial system 
    ($n=15$, $m=25$).  
    EDMD retains $6$ spurious eigenvalues (red); 
    Filtered EDMD recovers the $9$ true eigenvalues to numerical precision.}}
    \label{fig:poly-eigs}
\end{figure}

{
Figure~\ref{fig:poly-heatmaps} visualizes the matrix representations: the Koopman matrix is $19\times 19$ (full dictionary), and all computed matrices are $15\times 15$.  
Filtered EDMD recovers, to numerical precision, the block corresponding to the $9$
eigenfunctions in $\mathcal{G}$---the upper-left $10\times 10$ block 
with the fourth row and column removed---while T-SSD displays a similar 
pattern with noticeable numerical residues.
}

\begin{figure}[!htbp]
    \centering
    \includegraphics[width=0.9\textwidth]{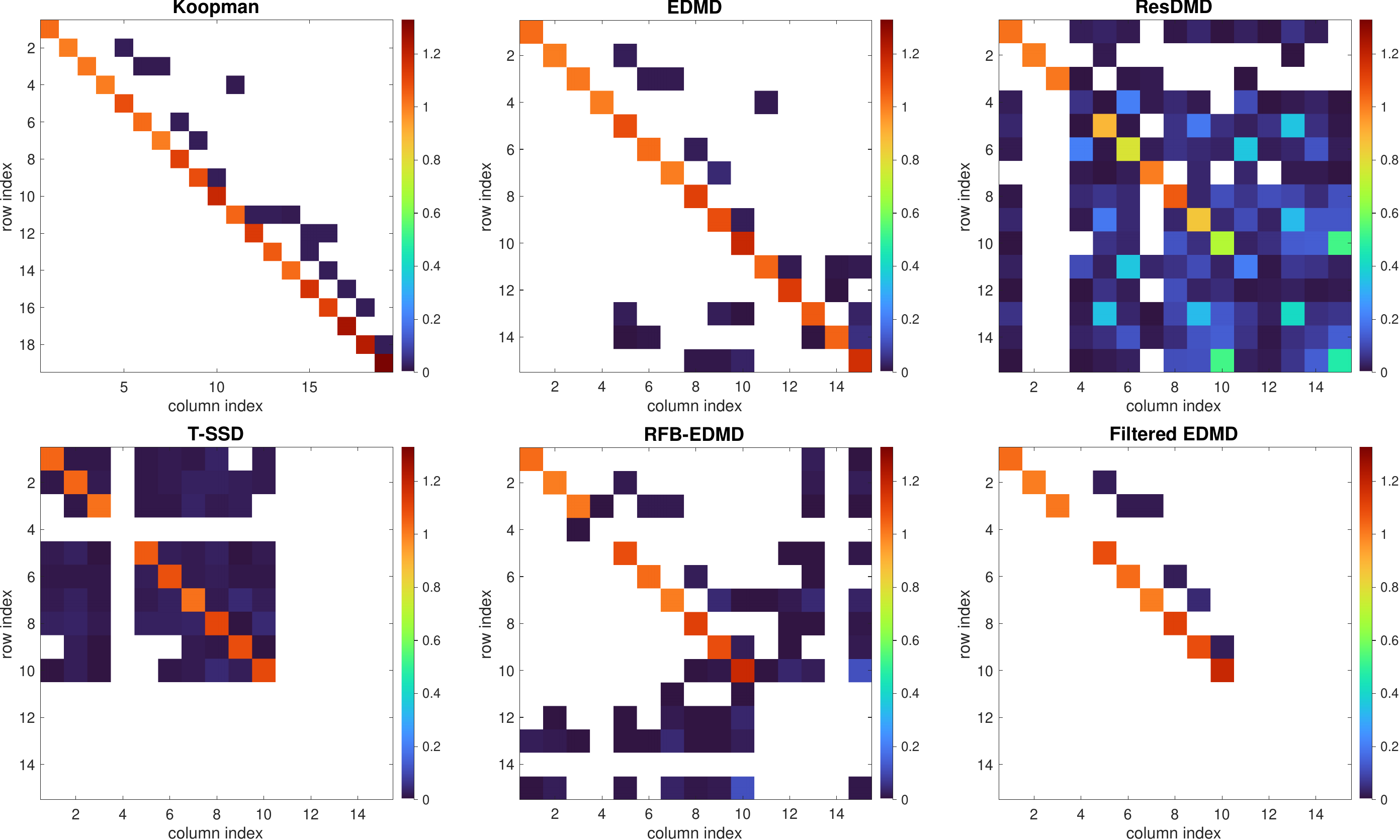}
    \caption{{Matrix heatmaps (absolute value) for the
    polynomial system.  The reference Koopman matrix is $19\times 19$; 
    all others are $15\times 15$.}}
    \label{fig:poly-heatmaps}
\end{figure}

\subsubsection{Stability under resampling}

To assess robustness beyond a single dataset, we perform a Monte Carlo 
experiment with $q=1000$ independent trials.  
Each trial generates fresh data matrices 
$(\mathbf{X},\mathbf{Y})\in\mathbb C^{15\times 25}$ from newly
sampled initial conditions.  
Tolerance parameters are pre-tuned once on a separate dataset and 
held fixed across all trials, so the comparison reflects intrinsic 
algorithmic stability rather than per-trial re-optimization.

We measure spectral error by the average modulus of spurious eigenvalues,
\[
d_{\mathrm{spec}}(\widehat{\Lambda},\Lambda_{\mathrm{gt}})
:=
\begin{cases}
\frac{1}{|\mathcal E|}\sum_{\lambda\in\mathcal E}|\lambda|, & \mathcal E\neq\varnothing,\\
0, & \mathcal E=\varnothing,
\end{cases}
\quad
\mathcal E:=\Big\{\lambda\in\widehat{\Lambda}: 
\min_{\mu\in\Lambda_{\mathrm{gt}}}|\lambda-\mu|>10^{-10}\Big\},
\]
{
where, as in the implementation, $\widehat\Lambda$ contains only finite
computed eigenvalues with $|\lambda|>10^{-10}$. The same threshold serves as a stringent matching tolerance, safely above floating-point roundoff.
}
Trajectory error at step $k$ is the relative deviation
$e_{\mathrm{traj}}(k) := 
\| \widehat{\mathbf{x}}(k) - \mathbf{x}(k)\|_2 / 
\|\mathbf{x}(k)\|_2 \times 100\%$.

{
Table~\ref{tab:mc-spec} summarizes spectral error statistics.  
Filtered EDMD has no detected spurious eigenvalues in any of the $1000$ trials;
ResDMD and T-SSD have none in $897$ and $839$ trials, respectively, whereas
EDMD and RFB-EDMD retain at least one in every trial. This statistic measures
false-positive eigenvalues only and does not penalize missing genuine ones.
Figure~\ref{fig:traj-box} shows the mean trajectory errors.
EDMD performs best initially due to its least-squares formulation, 
but error accumulates in the fourth component (which it attempts to 
predict despite missing the required eigenfunction).  
Filtered EDMD remains the most stable over time.  
These errors reflect algorithmic stability under fixed parameters and 
repeated resampling, not optimized prediction performance.
}

\begin{figure}[!htbp]
\centering
\begin{minipage}[c]{0.45\textwidth} 
\centering
\setlength{\tabcolsep}{5.8pt}
\begin{tabular}{lccc}
\toprule
Algorithm & Mean & Median & {No-spurious} \\
\midrule
EDMD      & 1.099  & 1.099  & 0 \\
\textbf{F-EDMD}  & \textbf{0}       & \textbf{0}       & \textbf{1000} \\
T-SSD     & 0.175 & 0       & 839 \\
RFB-EDMD  & 1.061  & 1.060  & 0 \\
{ResDMD}    & 0.119 & 0       & 897 \\
\bottomrule
\end{tabular}
\captionof{table}{{Spectral error $d_{\mathrm{spec}}$ over $1000$ Monte Carlo
trials. ``No-spurious'' counts trials with no unmatched nonzero estimate; it
does not measure spectral recall.}}
\label{tab:mc-spec}
\end{minipage}%
\hfill
\begin{minipage}[c]{0.50\textwidth} 
\centering
\includegraphics[width=\linewidth]{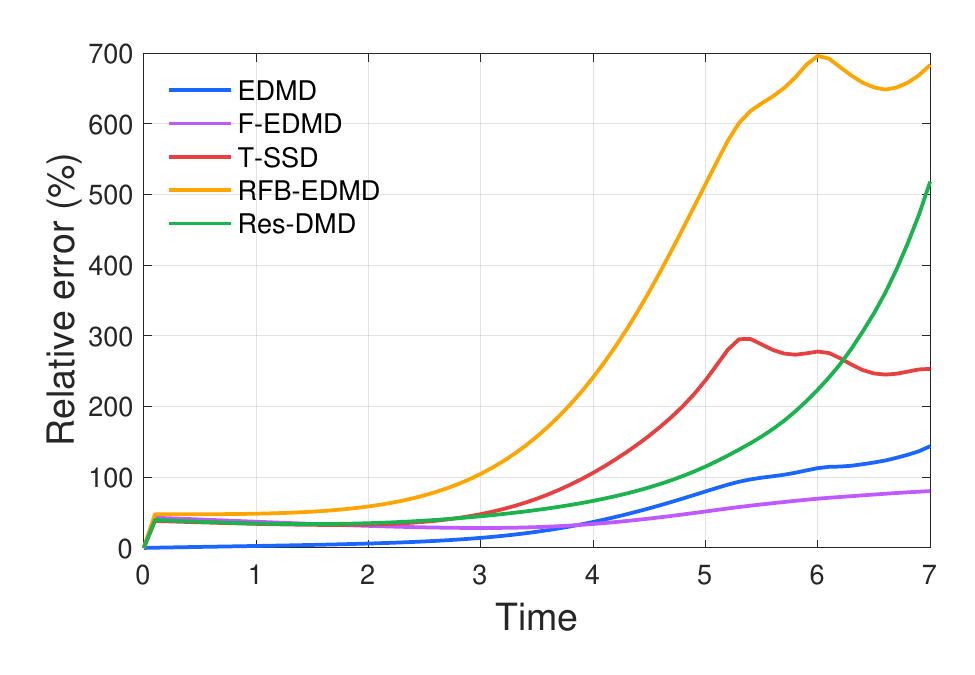}
\captionof{figure}{{Mean relative trajectory error (\%) over $1000$ trials
($n=15$, $m=25$).  
Filtered EDMD is the most stable; EDMD has low initial error 
but accumulates drift.}}
\label{fig:traj-box}
\end{minipage}
\end{figure}

\subsection{Van der Pol oscillator: spectrally uninformative observables}
\label{sec:exp-vdp}

{
This is the most demanding test: the dictionary contains 
\emph{no nontrivial Koopman eigenfunctions}, 
so its terminal invariant space is spectrally uninformative.
The question is whether Filtered EDMD---which requires only one-step 
compatibility ($\mathcal{G}\cap K\mathcal{G}$) rather than exact 
invariance---can still recover the underlying Koopman spectral structure.
}

\paragraph{Setup.}
We consider the Van der Pol oscillator in the weakly nonlinear regime,
\begin{equation*}
    \dot{x}_1 = x_2, \qquad
    \dot{x}_2 = \mu(1-x_1^2)x_2 - x_1,
\end{equation*}
with $\mu=0.3$ and state $\mathbf{x}=(x_1,x_2)^\top$.  

The system has a globally attracting limit cycle and a locally unstable equilibrium
(Fig.~\ref{fig:vdp-vectorfield}). 
These invariant objects induce different Koopman point spectra for the same
Van der Pol flow, depending on the chosen domain and observable space; see
Appendix~\ref{APP:VDP_Koop}.
The limit-cycle spectrum is generated by the phase eigenvalue \(i\omega\) and the
dominant transverse Floquet exponent \(\sigma<0\):
\begin{equation*}\label{eq:lattice-lc}
    \omega(\mu)=1-\frac{\mu^2}{16}+\mathcal{O}(\mu^4),
    \quad
    \sigma(\mu)=-\mu-\frac{\mu^3}{16}+\mathcal{O}(\mu^5),
    \quad
    \Lambda_{\rm lc}
    =
    \bigl\{
        ik\omega+\ell\sigma
        :
        k\in\mathbb Z,\ \ell\in\mathbb N
    \bigr\}.
\end{equation*}
The equilibrium spectrum is generated by the Jacobian eigenvalues at the unstable
equilibrium:
\begin{equation*}\label{eq:lattice-fp}
    \alpha_{\pm}
    =
    \frac{\mu\pm\sqrt{\mu^2-4}}{2},
    \qquad
    \Lambda_{\rm eq}
    =
    \bigl\{
        p\alpha_+ + q\alpha_-
        :
        p,q\in\mathbb N
    \bigr\}.
\end{equation*}
Both lattices are used as reference spectra in the numerical comparisons below.

The observable dictionary consists of all monomials $x_1^p x_2^q$ with
$p+q\leq 15$ and the trigonometric observables
\[
\cos(k\theta_0),\ \sin(k\theta_0),
\qquad k=1,\ldots,15,
\qquad \theta_0=\operatorname{atan2}(x_2,x_1),
\]
giving $n=166$ observables.
Apart from the constant function, the Koopman eigenfunctions of interest are
intricate functions of state, and none lies in $\operatorname{span}(\mathbf g)$.
We sample $m=3320$ initial conditions uniformly from $[-2,2]^2$ and 
advance each by $\Delta t=0.01$ to form snapshot pairs.

\begin{figure}[!htbp]
    \centering

    \begin{subfigure}[t]{0.31\textwidth}
        \centering
        \includegraphics[width=\textwidth]{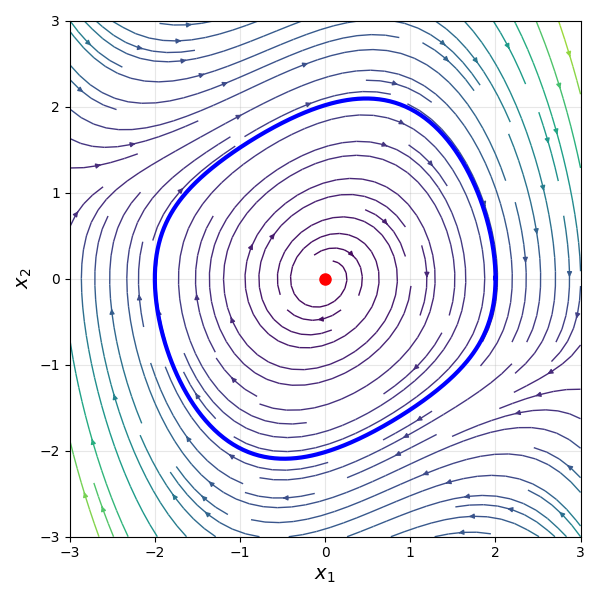}
        \caption{Vector field.}
        \label{fig:vdp-vectorfield}
    \end{subfigure}
    \hfill
    \begin{subfigure}[t]{0.31\textwidth}
        \centering
        \includegraphics[width=\textwidth]{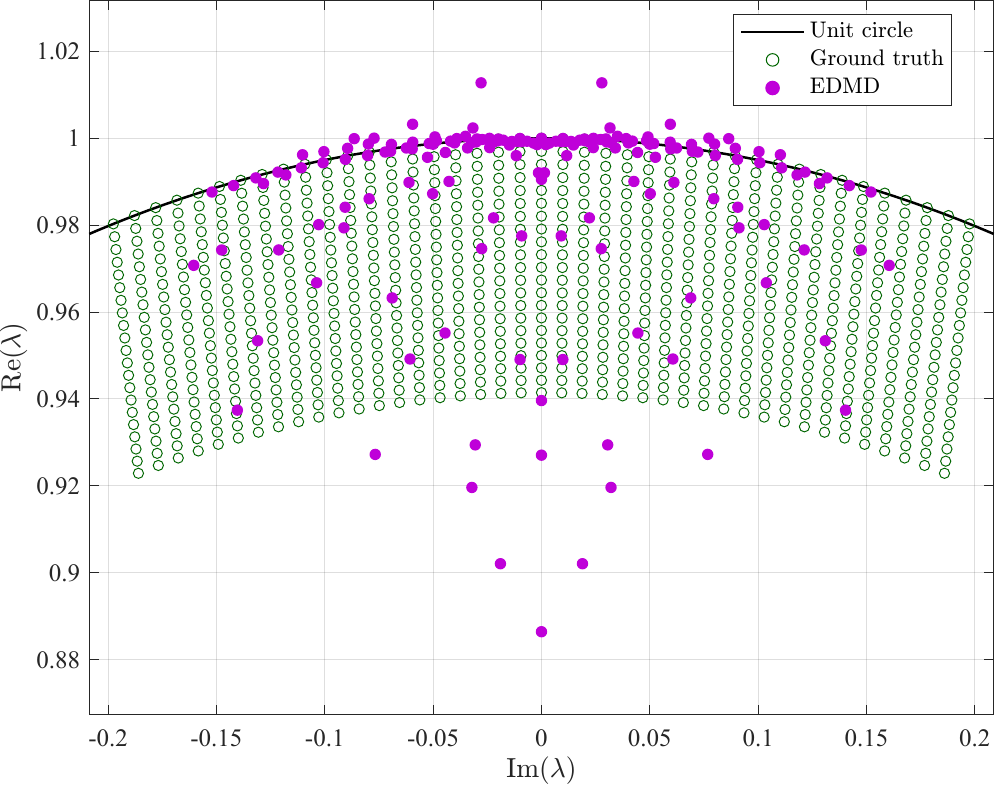}
        \caption{Discrete-time eigenvalues.}
        \label{fig:edmd-vdp-eigs-discrete}
    \end{subfigure}
    \hfill
    \begin{subfigure}[t]{0.31\textwidth}
        \centering
        \includegraphics[width=\textwidth]{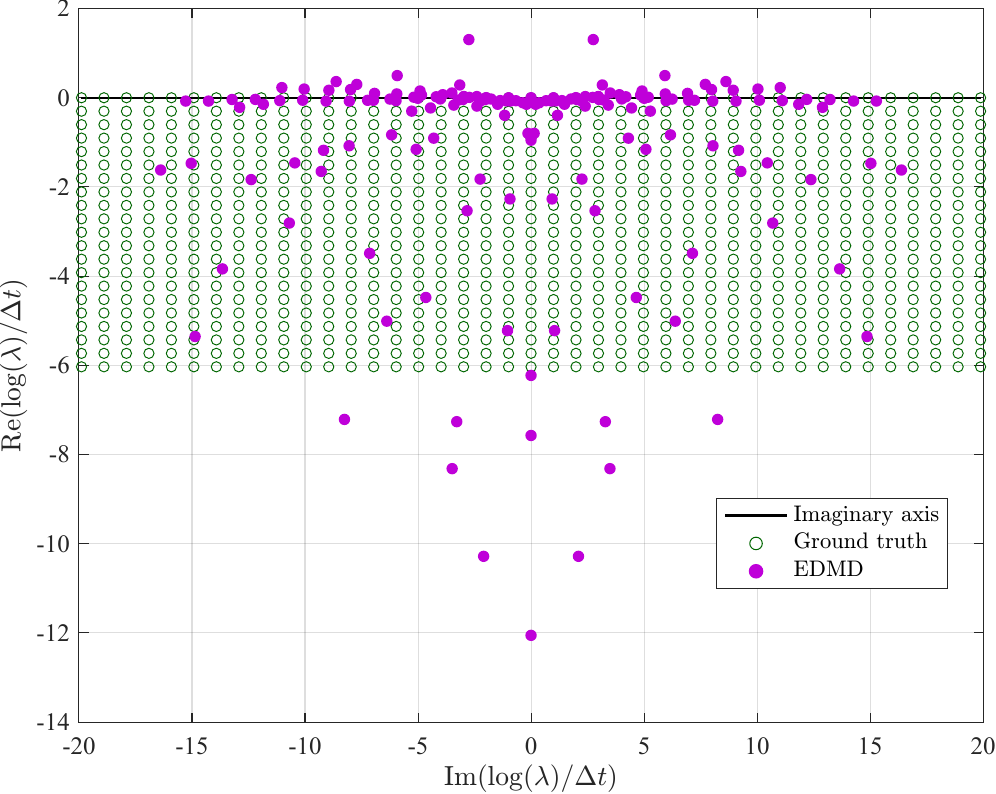}
        \caption{Continuous-time eigenvalues.}
        \label{fig:edmd-vdp-eigs-continuous}
    \end{subfigure}

    \caption{{Van der Pol oscillator ($\mu=0.3$). 
    Left: vector field with the limit cycle (blue), the unstable equilibrium 
    at the origin (red), and streamlines colored by flow speed. 
    Middle and right: EDMD eigenvalues (purple) versus the limit-cycle lattice
    $\Lambda_{\rm lc}$ (green circles) in discrete and continuous time, respectively.}}
\end{figure}

{
\paragraph{Spectral comparison across methods.}
EDMD does not resolve the limit-cycle lattice $\Lambda_{\rm lc}$: its $166$
eigenvalues bear little resemblance to that reference spectrum
(Figs.~\ref{fig:edmd-vdp-eigs-discrete} and~\ref{fig:edmd-vdp-eigs-continuous}).
For this diagnostic, we select the comparison methods' tolerances to
maximize lattice agreement over the tested values:
for T-SSD, the invariance tolerance; 
for RFB-EDMD, the forward--backward consistency threshold; 
and for ResDMD, the residual cutoff. Filtered EDMD uses the default numerical-rank
cutoff throughout.
Figure~\ref{fig:method-comparison} shows the outputs obtained under this
protocol:
\begin{itemize}
    \item \textbf{T-SSD} and \textbf{RFB-EDMD} retain subspaces of dimensions
    $24$ and $31$, respectively, and partially reveal $\Lambda_{\rm lc}$.
    {\item \textbf{ResDMD} retains $3$ EDMD eigenvalues, each matching a
corresponding element of $\Lambda_{\rm lc}$ to numerical precision at
the selected residual cutoff.}
    \item \textbf{Filtered EDMD} ($\mathbf A_{\mathcal S_1,m}^{\mathrm c}$,
    one-step filtering) retains $84$ directions and reveals a clear portion of
    the equilibrium lattice $\Lambda_{\rm eq}$.
\end{itemize}
Thus the panels expose different spectral content rather than a common
reduced operator. Exact SSD terminates at the uninformative invariant core;
T-SSD and RFB-EDMD retain partial limit-cycle structure, while the one-step
intersection retains polynomial directions aligned with the equilibrium
spectrum. The projection-geometry results in Section~\ref{sec:vdp-geometry} make
this distinction explicit: on the same $\mathcal S_1$, the measure-free coordinate
projector approximates $\Lambda_{\rm eq}$ while the $L^2(\mu)$ projector {tends to approximate}
$\Lambda_{\rm lc}$.
}

\begin{figure}[!htbp]
    \centering
    \includegraphics[width=0.70\textwidth]{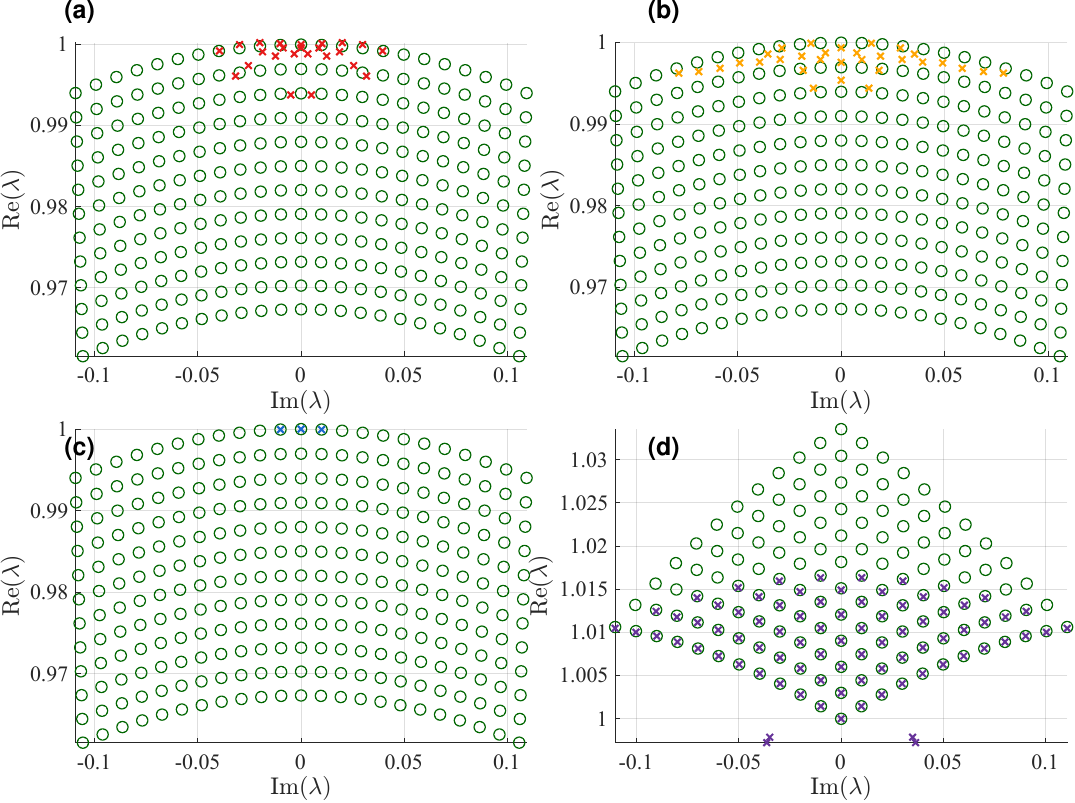}
    \caption{{Spectral outputs for the Van der Pol oscillator. Panels
    (a)--(c) compare T-SSD ($\dim=24$), RFB-EDMD ($\dim=31$), and ResDMD
    ($3$ eigenvalues) with $\Lambda_{\rm lc}$; panel (d) compares Filtered EDMD
    ($\mathcal S_1$, $\dim=84$) with $\Lambda_{\rm eq}$. Green circles denote
    the corresponding reference lattice.}}
    \label{fig:method-comparison}
\end{figure}

{
\paragraph{The filtration chain.}
For this dataset, the sampled construction produces four nontrivial levels
$\{\widehat{\mathcal S}_{j,m}\}_{j=1}^4$, yielding a
family of Filtered EDMD matrices 
$\{\mathbf A_{\mathcal S_j,m}^{\mathrm c}\}_{j=1}^4$.
As shown in Fig.~\ref{fig:chain-convergence}, the rank decreases 
along the chain while equilibrium-lattice structure remains visible.
}

\begin{figure}[!htbp]
    \centering
    \includegraphics[width=0.70\textwidth]{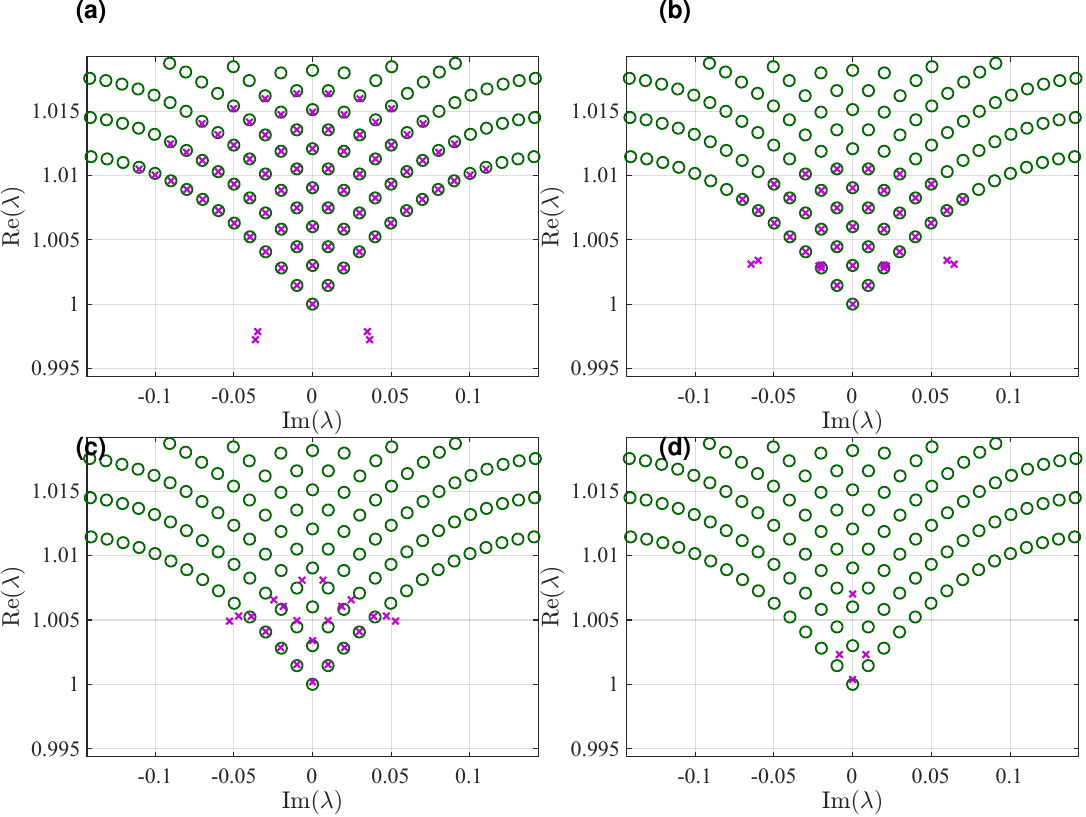}
    \caption{{Filtered EDMD eigenvalues along the forward--intersection 
    chain $\widehat{\mathcal S}_{j,m}$, $j=1,2,3,4$.
    Rank decreases monotonically while equilibrium-lattice structure remains
    visible.}}
    \label{fig:chain-convergence}
\end{figure}

{\subsubsection{Local eigenfunction approximation}\label{sec:vdp-eigfun-recovery}
Beyond eigenvalues, Filtered EDMD yields an approximation to the principal
local Koopman eigenfunction associated with the unstable equilibrium.
We extract this approximation from
\(\mathbf A_{\mathcal S_1,m}^{\mathrm c}\), selecting
the computed discrete-time eigenvalue \(\lambda_+=1.001452+0.009902i\),
which approximates the analytic \(e^{\alpha_+\Delta t}=1.0015+0.0099i\), and using the corresponding
left eigenvector to form $\widehat\varphi_{\lambda_+}$. Figure~\ref{fig:vdp-eigfun}(a)
shows its logarithmic magnitude, and Fig.~\ref{fig:vdp-eigfun}(b) displays the
associated phase- and amplitude-like level sets. Near the unstable equilibrium,
these structures resemble the local isochron--isostable geometry reported in
\citep{Mauroy2018}.}

\begin{figure}[!htbp]
    \centering
    \includegraphics[width=0.6\textwidth]{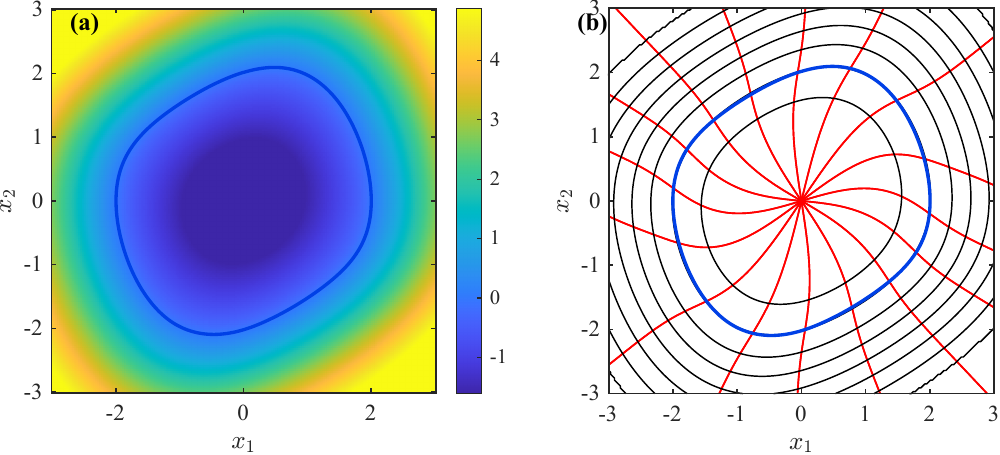}
    \caption{{Local Koopman-eigenfunction approximation from Filtered EDMD for
        the Van der Pol oscillator ($\mu=0.3$).  
        (a)~Log-magnitude of $\widehat\varphi_{\lambda_+}$.
        (b)~Phase-like (red, level sets of $\arg\widehat\varphi_{\lambda_+}$) and
        amplitude-like (black, level sets of $|\widehat\varphi_{\lambda_+}|$);
        limit cycle in blue.}}
    \label{fig:vdp-eigfun}
\end{figure}

\subsubsection{Projection geometry: coordinate versus \texorpdfstring{$L^2(\mu)$}{L2}, and sampling invariance}
\label{sec:vdp-geometry}

{We vary the projection \emph{geometry} and sampling \emph{measure} to compare
EDMD with the two canonical projectors from
Section~\ref{sec:canonical-projectors}. We use the polynomial observable
dictionary \(\mathcal G
=
\operatorname{span}
\left\{
x_1^p x_2^q:
p,q\geq 0,\;
p+q\leq 10
\right\}\)
and the first forward-intersection space
$\mathcal S_1=\mathcal G\cap K\mathcal G$. We compare the empirical matrices
$\mathbf A_{E,m}$, $\mathbf A_{\mathcal S_1,m}^{\mathrm c}$, and
$\mathbf A_{\mathcal S_1,m}^{\mu}$, whose respective population targets are
\[A_E=A_{\mathcal G}^{\mu}=P_{\mathcal G}^{\mu}\circ(K|_{\mathcal G}),\quad A_{\mathcal S_1}^{\mathrm c}=P_{\mathcal S_1}^{\mathrm c}\circ(K|_{\mathcal G}),\quad
A_{\mathcal S_1}^{\mu}=P_{\mathcal S_1}^{\mu}\circ(K|_{\mathcal G}).\]
The corresponding empirical matrices provide finite-sample approximations of
these population operators in the dictionary coordinates. At the population
level, $A_E$ and $A_{\mathcal S_1}^{\mu}$ depend on $\mu$ through their
$L^2(\mu)$-orthogonal projectors. The coordinate representation of
$K\mathbf g$ instead determines the geometry of
$A_{\mathcal S_1}^{\mathrm c}$ independently of $\mu$ for a given
$\mathbf{g}$ and $\mathcal S_1$.}

\paragraph{The $L^2(\mu)$-based spectra vary across sampling measures.}
{Figure~\ref{fig:vdp-robustness} shows that the finite-sample eigenvalues of
the EDMD matrix $\mathbf A_{E,m}$ and the $L^2(\mu)$ Filtered EDMD matrix
$\mathbf A_{\mathcal S_1,m}^{\mu}$ change across the three sampling
measures. For both matrices, the computed eigenvalues tend to lie near the
reference Koopman lattice associated with the region where the sampling
measure concentrates its mass. This observed sensitivity is consistent with
the $\mu$-dependence of their population targets $A_E$ and
$A_{\mathcal S_1}^{\mu}$.}

\begin{figure}[H]
    \centering
    \includegraphics[width=1\textwidth]
    {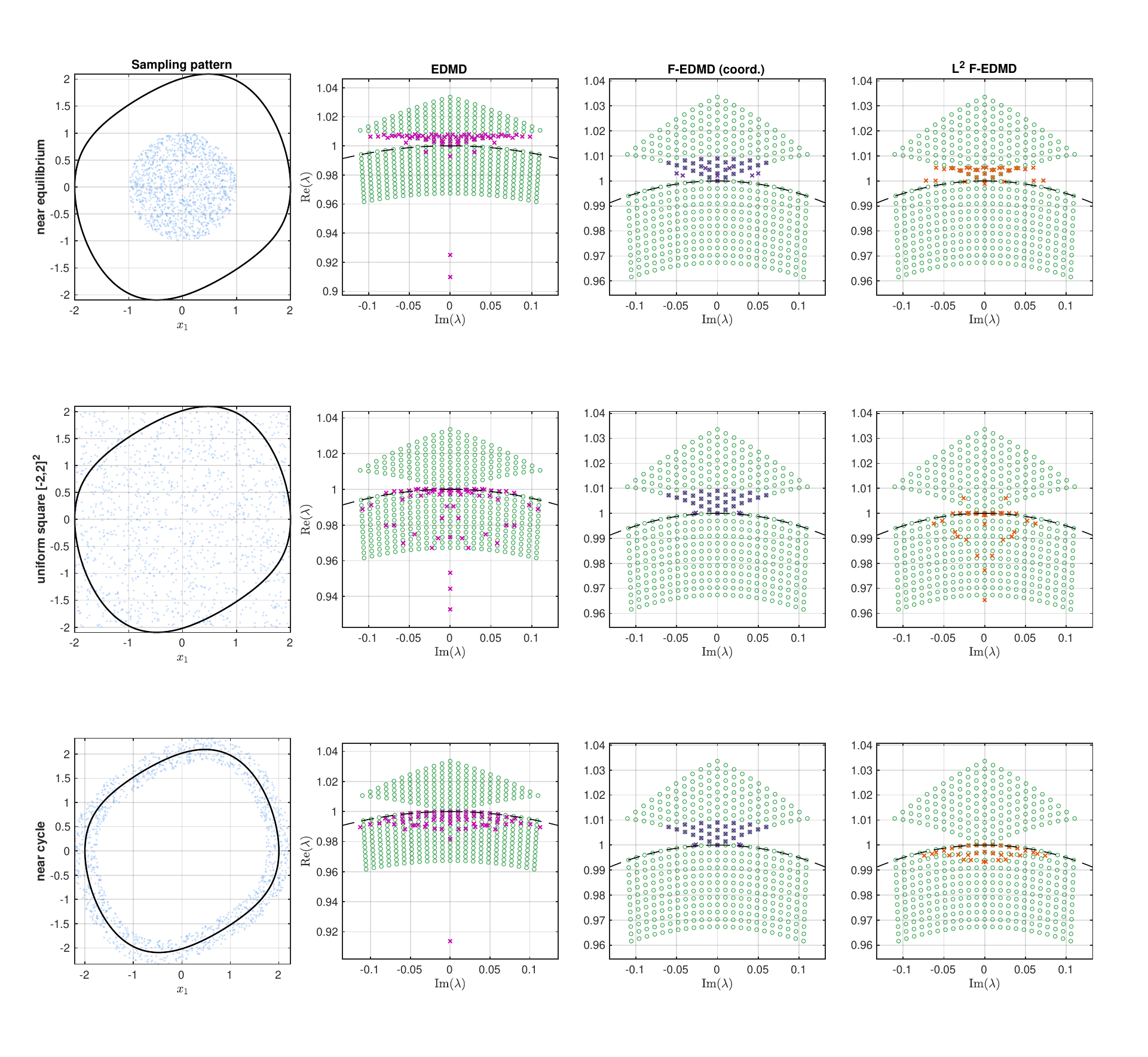}
    \caption{{
    Spectra under three sampling measures for the Van der Pol oscillator
    with a polynomial dictionary of total degree $10$ ($\Delta t=0.01$).
    Rows correspond to three sampling measures: a measure concentrated near the unstable equilibrium, the uniform measure on the square $[-2,2]^2$, and a measure concentrated near the limit cycle. Columns show the sampling
    pattern and the eigenvalues of EDMD, coordinate Filtered EDMD
    $\mathbf A_{\mathcal S_1,m}^{\mathrm c}$, and $L^2(\mu)$ Filtered EDMD
    $\mathbf A_{\mathcal S_1,m}^{\mu}$. Green circles mark the
    reference lattices $\Lambda_{\rm eq}$ and $\Lambda_{\rm lc}$. The EDMD and $L^2(\mu)$
    Filtered EDMD spectra change across sampling measures, whereas the
    coordinate Filtered EDMD spectrum remains visually aligned with
    $\Lambda_{\rm eq}$. }}
    \label{fig:vdp-robustness}
\end{figure}

\paragraph{Sampling-measure invariance of the coordinate F-EDMD target
$A_{\mathcal S_1}^{\mathrm c}$.}
{
Across the three sampling measures in Fig.~\ref{fig:vdp-robustness}, the
computed eigenvalues of $\mathbf A_{\mathcal S_1,m}^{\mathrm c}$ tend to
remain aligned with the equilibrium reference lattice $\Lambda_{\rm eq}$,
although their exact locations vary. This common tendency is consistent with
the fact that each empirical matrix approximates the same population operator \(A_{\mathcal S_1}^{\mathrm c}=P_{\mathcal S_1}^{\mathrm c}\circ(K|_{\mathcal G}).\)
For fixed $\mathcal G$ and $\mathcal S_1$, the coordinate representation of
$K\mathbf g$ determines the projection geometry of this operator independently
of $\mu$. Thus, sampling-measure invariance pertains to the population target
$A_{\mathcal S_1}^{\mathrm c}$, whereas its finite-sample approximation
$\mathbf A_{\mathcal S_1,m}^{\mathrm c}$ and the corresponding computed
eigenvalues can vary across samples.}

\paragraph{No fabricated spectrum.}
{
Finally, Filtered EDMD does not hallucinate structure when none is compatible: on the
pure angular dictionary $\{\cos(k\theta_0),\sin(k\theta_0)\}_{k=1}^{15}$ the sampled
one-step intersection is empty and $\mathbf A_{\mathcal S_1,m}^{\mathrm c}=0$, whereas
EDMD reports $30$ unit-circle eigenvalues that are not global Koopman eigenpairs.
}
\subsection{Computational cost}\label{sec:exp-cost}

Table~\ref{tab:cost} reports wall-clock times for all methods on the 
three test problems.%
\footnote{{All experiments were performed in MATLAB R2023b on a single machine with
18~GB of RAM.}
Timings are indicative; ranges report observed run-to-run variation.}

\begin{table}[H]
\centering
\begin{tabular}{l ccc}
\toprule
 & Kronecker & Polynomial & Van der Pol \\
 & ($n\!=\!18,\, m\!=\!50$)
 & ($n\!=\!15,\, m\!=\!25$)
 & ($n\!=\!166,\, m\!=\!3320$) \\
\midrule
EDMD        & \sciRange{1}{2}{-3} & \sciRange{1}{2}{-4} & \sci{2}{-2} \\
F-EDMD      & \sciRange{2}{3}{-3} & \sciRange{8}{9}{-4} & \sciRange{2}{3}{-1} \\
T-SSD       & \sciRange{5}{6}{-3} & \sciRange{5}{8}{-3} & \sciRange{8}{10}{-1} \\
RFB-EDMD    & \sciRange{4}{5}{-3} & \sci{8}{-4}         & \sciRange{1.3}{1.5}{0} \\
ResDMD      & \sci{2}{-3}         & \sciRange{1}{2}{-3} & \sciRange{2}{3}{-1} \\
\bottomrule
\end{tabular}
\caption{Wall-clock times (seconds) for each method on the three 
test problems.}
\label{tab:cost}
\end{table}

\begin{remark}[Complexity]\label{rmk:complexity}
{
The dominant cost of Filtered EDMD is the SVD of the $2r_j\times m$ 
stacked matrix in Algorithm~\ref{alg:iter-intersect}, requiring 
$O(\min(2r_j,m)\cdot 2r_j\cdot m)$ operations at level $j$. Hence the
total intersection cost is
$O\!\left(\sum_j\min(2r_j,m)\cdot 2r_j\cdot m\right)$; in the worst case,
this can exceed the cost of a single EDMD solve. In our examples, the chain
terminates in $1$--$4$ steps, and the reported Filtered EDMD runtime is at most
$0.3$ seconds, although it is several times slower than EDMD on the polynomial
and Van der Pol tests.
}
\end{remark}

\section{Conclusion and future work}\label{sec:conclusion}

{
We introduced the Projected Koopman Operator Approximation (PKOA)
framework for constructing \emph{Filtered EDMD} operators, together
with SVD-based algorithms that realize the coordinate-orthogonal
projection and compute the forward--intersection chain from data.
At the population level, every admissible projection preserves the nonzero
Koopman eigenpairs contained in the dictionary. The experiments show that the
intermediate filtered levels can remove spurious content while
retaining useful structure that is absent from the terminal invariant space.
Together, ordinary EDMD, the intermediate filtered levels, and the terminal
invariant range form a hierarchy of projection ranges. On the Van~der~Pol example,
the spectral content recovered at an intermediate level depends on how the
dictionary aligns with the equilibrium and limit-cycle eigenfunctions.
}

Several directions remain open.
First, while the PKOA framework admits any admissible projection,
the present algorithms implement only the coordinate-orthogonal
choice; characterizing the \emph{optimal} projection within this
family---e.g., one that minimizes spectral pollution or maximizes
spectral recovery for a given dictionary---is a natural theoretical
and computational question.
Second, understanding the robustness of Filtered EDMD under
observation noise and finite-sample effects would strengthen its
practical applicability; preliminary experiments suggest that the
filtering step has a regularizing effect, but a quantitative
analysis remains to be developed.
Finally, applying Filtered EDMD to prediction, control, and
stability analysis could clarify how the one-step compatibility
criterion complements invariance-based methods in practical
workflows.

\section*{Acknowledgments}
S.~Tang was supported by NSF DMS Career Grant No.2340631.

\begin{appendices}

\section{Generalized Eigenfunctions}\label{App:generalized_eigenfunction}

A function $\psi\in L^2(\mu)$ is called a \emph{generalized eigenfunction of order $k$} 
if it satisfies
\[
(K-\lambda I)^k \psi=0
\]
for some Koopman eigenvalue $\lambda$ and some integer $k \geq 1$.  
Ordinary Koopman eigenfunctions correspond to the case $k=1$.  
Any \emph{finite-dimensional} invariant subspace of the Koopman operator 
is spanned by a set of generalized eigenfunctions 
(this follows from the Jordan normal form of $K$ restricted to the subspace).

When the matrix $\mathbf{A}$ in Proposition~\ref{prop:spectral-correspondence} 
is not diagonalizable, the spectral correspondence extends as follows: 
the eigenvalues of $\mathbf{A}$ are still Koopman eigenvalues, and 
the \emph{generalized} left eigenvectors of $\mathbf{A}$ 
(i.e., vectors $\mathbf{l}$ satisfying $(\mathbf{A}^*-\bar\lambda I)^k\mathbf{l}=0$ 
for some $k\ge 1$) yield generalized Koopman eigenfunctions via 
$\psi(\cdot)=\langle \mathbf{g}(\cdot),\mathbf{l}\rangle$.

\section{Koopman spectra associated with the Van der Pol limit cycle and equilibrium}
\label{APP:VDP_Koop}
Let \(\boldsymbol{\Phi}^t\) denote the Van der Pol flow and let
\(K^t f=f\circ \boldsymbol{\Phi}^t\) be the associated Koopman operator. The
same flow has two relevant invariant objects: the attracting limit cycle
\(\Gamma_\mu\) and the unstable equilibrium \(\mathbf{x}_*=0\).  Different
choices of domain and observable space lead to different Koopman point spectra.
First consider the basin \(\mathcal B(\Gamma_\mu)\) of the limit cycle.  Following
the limit-cycle construction in \cite[Sec.~7]{Spectrum}, we use phase-amplitude
coordinates \(\theta:\mathcal B(\Gamma_\mu)\to \mathbb S^1\) and
\(r:\mathcal B(\Gamma_\mu)\to\mathbb R\), satisfying
\[
    \theta(\boldsymbol{\Phi}^t\mathbf{x})=\theta(\mathbf{x})+\omega t,
    \qquad
    r(\boldsymbol{\Phi}^t\mathbf{x})=e^{\sigma t}r(\mathbf{x}),
    \qquad \sigma<0 .
\]
Thus \(\phi_{i\omega}=e^{i\theta}\) and \(\phi_{\sigma}=r\) are Koopman
eigenfunctions:
\[
    K^t\phi_{i\omega}=e^{i\omega t}\phi_{i\omega},
    \qquad
    K^t\phi_{\sigma}=e^{\sigma t}\phi_{\sigma}.
\]
The limit-cycle observable space is
\[
    \mathcal H_{\rm lc}
    =
    \left\{
    f(\mathbf{x})=
    \sum_{\ell\ge 0}\sum_{k\in\mathbb Z}
    a_{\ell k}\,
    \phi_{\sigma}(\mathbf{x})^\ell
    \phi_{i\omega}(\mathbf{x})^k
    :
    \sum_{\ell,k}|a_{\ell k}|^2 w_{\ell k}<\infty
    \right\},
\]
where the weights \(w_{\ell k}>0\) are chosen so that the expansion is analytic in
the transverse coordinate \(\phi_\sigma\) and square-integrable in the phase
coordinate.  On \(\mathcal H_{\rm lc}\),
\[
    K^t\!\left(\phi_{\sigma}^{\ell}\phi_{i\omega}^{k}\right)
    =
    e^{(\ell\sigma+ik\omega)t}\phi_{\sigma}^{\ell}\phi_{i\omega}^{k},
    \qquad
    \Lambda_{\rm lc}
    =
    \left\{
        ik\omega+\ell\sigma
        :
        k\in\mathbb Z,\ \ell\in\mathbb N
    \right\}.
\]
We take
\(\mathbb N=\{0,1,2,\ldots\}\). The corresponding isochrons and isostables are
\[
    \{ \mathbf{x}:\arg\phi_{i\omega}(\mathbf{x})=\mathrm{const}\},
    \qquad
    \{ \mathbf{x}:\phi_{\sigma}(\mathbf{x})=\mathrm{const}\}.
\]
We now consider the unstable equilibrium \(\mathbf{x}_*=0\).  Its Jacobian is
\[
    \begin{pmatrix}
        0 & 1\\
        -1 & \mu
    \end{pmatrix},
    \qquad
    \alpha_{\pm}
    =
    \frac{\mu\pm\sqrt{\mu^2-4}}{2}.
\]
For \(0<\mu<2\), \(\alpha_{\pm}\) are a complex conjugate pair.  In a
linearization neighborhood \(D_{\rm eq}\), Poincare linearization yields local
analytic Koopman eigen-coordinates \(s_+,s_-:D_{\rm eq}\to\mathbb C\), satisfying
\[
    s_{\pm}(\boldsymbol{\Phi}^t\mathbf{x})
    =
    e^{\alpha_{\pm}t}s_{\pm}(\mathbf{x}),
    \qquad
    K^t s_{\pm}=e^{\alpha_{\pm}t}s_{\pm},
\]
for all times for which the trajectory remains in \(D_{\rm eq}\). 
The local
equilibrium observable space is
\[
    \mathcal H_{\rm eq}
    =
    \left\{
    f(\mathbf{x})=
    \sum_{p,q\ge 0}
    b_{pq}\,
    s_+(\mathbf{x})^p s_-(\mathbf{x})^q
    :
    \sum_{p,q}|b_{pq}|^2 \widetilde w_{pq}<\infty
    \right\},
\]
where the weights \(\widetilde w_{pq}>0\) are chosen so that the power series has a
positive radius of convergence in both local eigen-coordinates.  This is in the spirit of
the equilibrium spectral expansion in \cite[Sec.~5]{Spectrum}. 
On this space,
\[
    K^t(s_+^p s_-^q)
    =
    e^{(p\alpha_+ + q\alpha_-)t}s_+^p s_-^q,
    \qquad
    \Lambda_{\rm eq}
    =
    \left\{
        p\alpha_+ + q\alpha_-
        :
        p,q\in\mathbb N
    \right\}.
\]
Writing \(s_+(\mathbf{x})=\rho_{\rm eq}(\mathbf{x})
e^{i\vartheta_{\rm eq}(\mathbf{x})}\), the phase-like and amplitude-like foliations near the unstable equilibrium are
\[
    \{\mathbf{x}:\arg s_+(\mathbf{x})=\mathrm{const}\},
    \qquad
    \{\mathbf{x}:|s_+(\mathbf{x})|=\mathrm{const}\}.
\]
They are not the global isochrons or isostables of the attracting limit cycle, but
they are locally analogous to the isochron/isostable structure associated with
Koopman eigenfunction level sets. 
Hence
\(\Lambda_{\rm lc}\) and \(\Lambda_{\rm eq}\) are spectra of the same Koopman
family \(K^t\), but on different domains and observable spaces:
\(\mathcal H_{\rm lc}\) is adapted to the attracting limit cycle, while
\(\mathcal H_{\rm eq}\) is adapted to the unstable equilibrium.

\section{Auxiliary Result}\label{APP:Auxiliary_Result}

\begin{proposition}[Convergence of the sampled coefficient projectors]
\label{prop:sample-projector-convergence}
Let $\{g_j\}_{j=1}^n\subset L^2(\mu)$ be $L^2(\mu)$-linearly independent, set
$\mathcal G=\mathrm{span}\{g_1,\dots,g_n\}$ and $\mathbf g=(g_1,\dots,g_n)^\top$.
Let $K$ be the Koopman operator on $L^2(\mu)$ and let $P_{\mathcal G}^\mu$ be the 
$L^2(\mu)$-orthogonal projector onto $\mathcal G$.
For i.i.d.\ samples $\{\mathbf x_i\}_{i=1}^m\sim\mu$ define
\[
\mathbf X=\big[\mathbf g(\mathbf x_1)\ \cdots\ \mathbf g(\mathbf x_m)\big]
  \in\mathbb C^{n\times m},\qquad
\mathbf Y=\big[(K\mathbf g)(\mathbf x_1)\ \cdots\ (K\mathbf g)(\mathbf x_m)\big]
  \in\mathbb C^{n\times m}.
\]
Let
\[
W_{\mathcal S_1,m}:=\big\{\mathbf c\in\mathbb C^n:\
  \mathbf c^*\mathbf Y\in\widehat{\mathcal S}_{1,m}\big\},
\qquad
\boldsymbol\Pi_{\mathcal S_1,m}^{\mathrm c}
:=\operatorname{proj}_{W_{\mathcal S_1,m}},
\]
where $\widehat{\mathcal S}_{1,m}=\operatorname{row}(\mathbf X)
\cap\operatorname{row}(\mathbf Y)$ and $\operatorname{proj}$ is the
Euclidean-orthogonal projector in $\mathbb C^n$,
and define the Gram matrices
\begin{equation}\label{eq:gram-matrices}
\mathbf G_{ij}:=\langle g_i,g_j\rangle_\mu,\quad
\mathbf B_{ij}:=\langle Kg_i,g_j\rangle_\mu,\quad
(\mathbf H_K)_{ij}:=\langle Kg_i,Kg_j\rangle_\mu,
\qquad
\mathbf R:=\mathbf H_K-\mathbf B\,\mathbf G^{-1}\mathbf B^*\succeq 0.
\end{equation}
Set
\begin{equation}\label{eq:population-projector}
W_{\mathcal S_1}
:=\big\{\mathbf c\in\mathbb C^n:\ \Psi_K\mathbf c\in\mathcal G\big\}
=\Psi_K^{-1}(\mathcal S_1),
\qquad
\boldsymbol\Pi_{\mathcal S_1}^{\mathrm c}
:=\operatorname{proj}_{W_{\mathcal S_1}}.
\end{equation}
Then, almost surely as $m\to\infty$:
\begin{enumerate}
\item[\emph{(a)}] \emph{(Population identification)}
$\ker(\mathbf R)=W_{\mathcal S_1}$.
\item[\emph{(b)}] \emph{(Sample identification)} There is an almost surely
finite $m_0$ such that, for every $m\geq m_0$,
\[
W_{\mathcal S_1,m}=W_{\mathcal S_1},
\qquad
\boldsymbol\Pi_{\mathcal S_1,m}^{\mathrm c}
=\boldsymbol\Pi_{\mathcal S_1}^{\mathrm c}.
\]
In particular,
$\|\boldsymbol\Pi_{\mathcal S_1,m}^{\mathrm c}
-\boldsymbol\Pi_{\mathcal S_1}^{\mathrm c}\|\to0$.
\end{enumerate}
\end{proposition}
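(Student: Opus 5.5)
The plan is to prove (a) by a purely population-level residual argument. Part (b) then follows by showing that Assumption~\ref{ass:sampling-identifiability} holds almost surely for all large $m$, and by invoking Proposition~\ref{prop:Em-intersections}.

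\emph{Step 1 (part (a)).} Write $\mathbf r:=(I-P_{\mathcal G}^\mu)K\mathbf g$ componentwise. Since $P_{\mathcal G}^\mu Kg_i=\sum_{k}(\mathbf B\mathbf G^{-1})_{ik}g_k$, a direct expansion of inner products gives
\[
\langle r_i,r_j\rangle_\mu=\bigl(\mathbf H_K-\mathbf B\mathbf G^{-1}\mathbf B^*\bigr)_{ij}=\mathbf R_{ij}.
\]
Hence $\mathbf R$ is the Gram matrix of the residuals. For the coefficient convention $\Psi_K\mathbf c=\sum_j\overline{c_j}Kg_j$, this yields
\[
\mathbf c^*\mathbf R\mathbf c=\bigl\|(I-P_{\mathcal G}^\mu)\Psi_K\mathbf c\bigr\|_\mu^2 .
\]
I expect to track the conjugations carefully here; if needed I will work with $\overline{\mathbf c}$ and use that $\mathbf R$ is Hermitian, so that the two conventions give the same kernel. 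The identity shows $\mathbf R\succeq0$. For a Hermitian positive semidefinite matrix, $\mathbf R\mathbf c=0$ if and only if $\mathbf c^*\mathbf R\mathbf c=0$. The latter holds if and only if $\Psi_K\mathbf c\in\mathcal G$, because elements of $L^2(\mu)$ with zero norm are zero. This is exactly $\mathbf c\in W_{\mathcal S_1}$, since $\Psi_K\mathbf c\in K\mathcal G$ always, so $\Psi_K\mathbf c\in\mathcal G$ is equivalent to $\Psi_K\mathbf c\in\mathcal G\cap K\mathcal G=\mathcal S_1$.

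\emph{Step 2 (eventual identifiability).} Fix a basis $h_1,\dots,h_d$ of the finite-dimensional space $\mathcal G+K\mathcal G\subset L^2(\mu)$, chosen from among the $g_j$ and $Kg_j$ so that each $h_k$ is a genuine function with finite second moment. Let $\mathbf h=(h_1,\dots,h_d)^\top$ and $\mathbf T:=\mathcal E_m(\mathbf h)\in\mathbb C^{d\times m}$. By the strong law of large numbers, $\frac1m\mathbf T\mathbf T^*\to\mathbf G_h$ almost surely, where $(\mathbf G_h)_{kl}=\langle h_k,h_l\rangle_\mu$. Linear independence in $L^2(\mu)$ makes $\mathbf G_h\succ0$. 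By continuity of the smallest eigenvalue, there is an almost surely finite $m_0$ such that $\frac1m\mathbf T\mathbf T^*\succ0$ for all $m\ge m_0$. Then $\mathcal E_m$ is injective on $\mathcal G+K\mathcal G$, which is Assumption~\ref{ass:sampling-identifiability}. I expect this to be the main point needing care: the assumption must be identified with positive definiteness of an empirical Gram matrix, and the almost-sure event must be uniform in $m$ beyond $m_0$. The latter is automatic because the argument uses a single almost-sure limit.

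\emph{Step 3 (part (b)).} For $m\ge m_0$, Proposition~\ref{prop:Em-intersections} gives $\widehat{\mathcal S}_{1,m}=\mathcal E_m(\mathcal G\cap K\mathcal G)$. Moreover, $\mathbf c^*\mathbf Y=\mathcal E_m(\Psi_K\mathbf c)$ up to the same conjugation convention as in Step 1. Hence $\mathbf c\in W_{\mathcal S_1,m}$ if and only if $\mathcal E_m(\Psi_K\mathbf c)=\mathcal E_m(s)$ for some $s\in\mathcal S_1$. Both $\Psi_K\mathbf c$ and $s$ lie in $\mathcal G+K\mathcal G$, so injectivity forces $\Psi_K\mathbf c=s\in\mathcal S_1$. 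Thus $W_{\mathcal S_1,m}=W_{\mathcal S_1}$, and the Euclidean projectors coincide for all $m\ge m_0$. The norm convergence is then trivial.
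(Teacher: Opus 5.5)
Your part (a) is the paper's Step~3 in substance: $\mathbf R$ is the Gram matrix of the residuals $(I-P_{\mathcal G}^\mu)Kg_i$, so $\mathbf c^*\mathbf R\mathbf c=\|(I-P_{\mathcal G}^\mu)\Psi_K\mathbf c\|_\mu^2$. With $\Psi_K\mathbf c=\sum_j\overline{c_j}Kg_j$ this identity holds exactly, so no conjugation workaround is needed. Hermitian symmetry would not supply one anyway, since $\ker\overline{\mathbf R}=\overline{\ker\mathbf R}$, which in general differs from $\ker\mathbf R$.

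For (b) you take a genuinely different route. The paper never uses Assumption~\ref{ass:sampling-identifiability} or Proposition~\ref{prop:Em-intersections}. It writes $W_{\mathcal S_1,m}=\ker\mathbf R_m$ with $\mathbf R_m=\mathbf Y(\mathbf I_m-\mathbf X^\dagger\mathbf X)\mathbf Y^*$, shows $\mathbf R_m/m\to\mathbf R$ by the strong law, and proves $W_{\mathcal S_1}\subseteq\ker\mathbf R_m$ for every $m$. It then excludes extra kernel directions via the spectral gap $\gamma>0$ of $\mathbf R$ on $W_{\mathcal S_1}^\perp$. There, (a) drives (b), and $\gamma$ gives an explicit margin separating compatible from incompatible directions; it is the smallest squared residual over unit $\mathbf c\perp W_{\mathcal S_1}$.

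You instead prove that sampling identifiability itself holds eventually almost surely, and you read off $W_{\mathcal S_1,m}=W_{\mathcal S_1}$ from Proposition~\ref{prop:Em-intersections}. Part (a) thus becomes a standalone population fact. This buys generality: once the point below is added, your Step~2 shows that the hypothesis of Remark~\ref{rmk:full-chain-convergence} holds automatically under i.i.d.\ sampling. Together with that remark, you get the identification at every level of the chain. What you give up is any quantitative margin.

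One step must be made explicit. Positive definiteness of $\frac1m\mathbf T\mathbf T^*$ gives injectivity of sampling only on the span of the chosen representatives $h_1,\dots,h_d$. The remaining $g_j$ and $Kg_j$, whose samples fill the other rows of $\mathbf X$ and $\mathbf Y$, equal combinations of the $h_k$ only $\mu$-a.e. Likewise, $\mathbf c\in W_{\mathcal S_1}$ means $\Psi_K\mathbf c=\Psi_{\mathcal G}\mathbf a$ only $\mu$-a.e.

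If a sample point falls in one of these exceptional sets, the stacked rank in~\eqref{eq:rank-condition} can exceed $\dim(\mathcal G+K\mathcal G)$, and $\mathbf c^*\mathbf Y=\mathbf a^*\mathbf X$ can fail. That breaks two things:
\begin{itemize}
\item the ``immediate'' inclusion in Proposition~\ref{prop:Em-intersections}, which underlies your inclusion $W_{\mathcal S_1}\subseteq W_{\mathcal S_1,m}$;
\item the transfer of injectivity to $\Psi_K\mathbf c-s$ in your Step~3.
\end{itemize}

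You therefore need an additional almost-sure event: no $\mathbf x_i$ lies in the finite union of null sets where these finitely many linear relations fail. This is exactly what the paper adds in its Step~4. You located the delicate point correctly, namely identifying the assumption with an empirical Gram condition. The real issue there is this null-set event, not uniformity in $m$. On the intersection of this event with your strong-law event, your Steps~2--3 go through as written.
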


\begin{proof}
\emph{Step 1 (Row/column reduction and PSD form).}
For any matrix $\mathbf M$, 
$\mathbf w\in\mathrm{row}(\mathbf M)
\iff \mathbf w^*\in\mathrm{range}(\mathbf M^*)=(\ker\mathbf M)^\perp$.
Thus
\[
\mathbf c^*\mathbf Y\in\mathrm{row}(\mathbf X)
\iff
\boldsymbol\Pi_{\mathbf X}^{\perp}\mathbf Y^* \mathbf c=\mathbf 0
\iff
\mathbf c\in\ker\!\big(
\mathbf Y\boldsymbol\Pi_{\mathbf X}^{\perp}\mathbf Y^*\big),
\]
where
$\boldsymbol\Pi_{\mathbf X}^{\perp}:=\mathbf I_m-\mathbf X^\dagger\mathbf X$
is the sample-space projector onto $\ker\mathbf X$. The last equivalence
uses the fact that
$\mathbf R_m:=\mathbf Y\boldsymbol\Pi_{\mathbf X}^{\perp}\mathbf Y^*
\succeq0$, so
$\mathbf R_m\mathbf c = 0$ iff 
$\|\boldsymbol\Pi_{\mathbf X}^{\perp}\mathbf Y^*\mathbf c\|^2
= \mathbf c^*\mathbf R_m\mathbf c = 0$.
Since $\mathbf c^*\mathbf Y$ is always a row of $\mathbf Y$ 
(being a linear combination of the rows of $\mathbf Y$), 
the condition $\mathbf c^*\mathbf Y \in \mathrm{row}(\mathbf Y)$ 
is automatically satisfied.
Hence
\[
W_{\mathcal S_1,m}=\ker(\mathbf R_m),
\qquad
\boldsymbol\Pi_{\mathcal S_1,m}^{\mathrm c}
=\operatorname{proj}_{\ker(\mathbf R_m)}.
\]

\smallskip
\emph{Step 2 (LLN limit of $\mathbf R_m/m$).}
Using $\boldsymbol\Pi_{\mathbf X}^{\perp}
=\mathbf I_m-\mathbf X^*(\mathbf X\mathbf X^*)^\dagger\mathbf X$
and the Strong Law of Large Numbers entrywise,
\[
\frac{1}{m}\mathbf X\mathbf X^*\to \mathbf G\succ 0,\quad
\frac{1}{m}\mathbf Y\mathbf X^*\to \mathbf B,\quad
\frac{1}{m}\mathbf Y\mathbf Y^*\to \mathbf H_K
\qquad\text{a.s.}
\]
Since $\mathbf G$ is invertible (by the $L^2(\mu)$-linear independence of the 
dictionary), $\big(\tfrac{1}{m}\mathbf X\mathbf X^*\big)^{-1}\to \mathbf G^{-1}$ 
a.s., and
\[
\frac{1}{m}\mathbf R_m
=
\frac{1}{m}\mathbf Y\mathbf Y^*
-
\Big(\frac{1}{m}\mathbf Y\mathbf X^*\Big)
\Big(\frac{1}{m}\mathbf X\mathbf X^*\Big)^{-1}
\Big(\frac{1}{m}\mathbf X\mathbf Y^*\Big)
\ \xrightarrow[]{a.s.}\
\mathbf R:=\mathbf H_K-\mathbf B\mathbf G^{-1}\mathbf B^*\succeq 0.
\]

\smallskip
\emph{Step 3 (Characterize $\ker\mathbf R$).}
Let
$\Psi_{\mathcal G}:\mathbb C^n\to\mathcal G$ be given by
$\Psi_{\mathcal G}\mathbf c=\langle\mathbf g,\mathbf c\rangle
=\sum_j\overline{c_j}g_j$, and set $v=\Psi_{\mathcal G}\mathbf c$.
Expanding the orthogonal decomposition 
$Kv = P^\mu_{\mathcal G}(Kv) + (I-P^\mu_{\mathcal G})Kv$ 
and using the Gram matrices gives
\[
\|Kv\|_{L^2(\mu)}^2 = \mathbf c^*\mathbf H_K\mathbf c,
\qquad
\|P^\mu_{\mathcal G}(Kv)\|_{L^2(\mu)}^2 
= \mathbf c^*\mathbf B\,\mathbf G^{-1}\mathbf B^*\mathbf c.
\]
Indeed, the second identity follows from the normal equations for the
best approximation: writing
$P^\mu_{\mathcal G}(Kv)=\Psi_{\mathcal G}\mathbf a
=\sum_i\overline{a_i}g_i$ with coefficient vector $\mathbf a$, the
optimality conditions
$\langle Kv - P^\mu_{\mathcal G}(Kv),\, g_i\rangle_\mu = 0$ yield 
$\mathbf G\,\mathbf a = \mathbf B^*\mathbf c$, 
so $\mathbf a = \mathbf G^{-1}\mathbf B^*\mathbf c$ and 
$\|P^\mu_{\mathcal G}(Kv)\|^2 
= \mathbf a^*\mathbf G\,\mathbf a 
= \mathbf c^*\mathbf B\,\mathbf G^{-1}\mathbf B^*\mathbf c$.
By the Pythagorean theorem,
\[
\|(I-P^\mu_{\mathcal G})Kv\|_{L^2(\mu)}^2
= \mathbf c^*\mathbf H_K\mathbf c
  - \mathbf c^*\mathbf B\,\mathbf G^{-1}\mathbf B^*\mathbf c
= \mathbf c^*\mathbf R\,\mathbf c.
\]
Therefore $\mathbf c\in\ker\mathbf R 
\iff \mathbf c^*\mathbf R\,\mathbf c = 0 
\iff (I-P^\mu_{\mathcal G})\Psi_K\mathbf c=0
\iff \Psi_K\mathbf c\in\mathcal G$,
i.e.\ $\ker\mathbf R=W_{\mathcal S_1}$. This proves~(a).

\smallskip
\emph{Step 4 (Eventual sample identification).}
Choose a basis $\{\mathbf c_1,\ldots,\mathbf c_d\}$ of
$W_{\mathcal S_1}$. For each $\ell$ there is an
$\mathbf a_\ell\in\mathbb C^n$ such that
$\Psi_K\mathbf c_\ell=\Psi_{\mathcal G}\mathbf a_\ell$ in $L^2(\mu)$.
With probability one, all sampled points avoid the union of the corresponding
finite collection of null sets. On this event,
$\mathbf c_\ell^*\mathbf Y=\mathbf a_\ell^*\mathbf X$ for every $m$ and
every $\ell$, so
\[
W_{\mathcal S_1}\subseteq W_{\mathcal S_1,m}=\ker\mathbf R_m
\qquad\text{for every }m.
\]
If $\mathbf R=0$, then $W_{\mathcal S_1}=\mathbb C^n$, and equality follows
immediately. Otherwise, let $\gamma>0$ be the smallest eigenvalue of
$\mathbf R$ on $W_{\mathcal S_1}^{\perp}$. By Step~2, almost surely
$\|\mathbf R_m/m-\mathbf R\|<\gamma/2$ for all sufficiently large $m$.
Hence, for every $\mathbf z\in W_{\mathcal S_1}^{\perp}$,
\[
\mathbf z^*\frac{\mathbf R_m}{m}\mathbf z
\geq\frac{\gamma}{2}\|\mathbf z\|_2^2.
\]
Since $\mathbf R_m$ annihilates $W_{\mathcal S_1}$, this inequality rules
out any additional kernel direction. Thus
$\ker\mathbf R_m=W_{\mathcal S_1}$ for all sufficiently large $m$, proving
the two identities and the convergence in~(b).
\end{proof}

\end{appendices}

\bibliography{references}

@article{RowleyMezicEtAl2009,
  author    = {Rowley, Clarence W. and Mezi{\'c}, Igor and Bagheri, Shervin
               and Schlatter, Philipp and Henningson, Dan S.},
  title     = {Spectral analysis of nonlinear flows},
  journal   = {Journal of Fluid Mechanics},
  volume    = {641},
  pages     = {115--127},
  year      = {2009},
  doi       = {10.1017/S0022112009992059}
}

@article{BudisicMohrMezic2012,
  author    = {Budi{\v{s}}i{\'c}, Marko and Mohr, Ryan and Mezi{\'c}, Igor},
  title     = {Applied {Koopman}ism},
  journal   = {Chaos},
  volume    = {22},
  number    = {4},
  pages     = {047510},
  year      = {2012},
  doi       = {10.1063/1.4772195}
}

@article{williams2015data,
  title={A data--driven approximation of the {Koopman} operator: Extending dynamic mode decomposition},
  author={Williams, Matthew O and Kevrekidis, Ioannis G and Rowley, Clarence W},
  journal={Journal of Nonlinear Science},
  volume={25},
  number={6},
  pages={1307--1346},
  year={2015},
  publisher={Springer}
}

@article{colbrook2024rigorous,
  title={Rigorous data-driven computation of spectral properties of {Koopman} operators for dynamical systems},
  author={Colbrook, Matthew J and Townsend, Alex},
  journal={Communications on Pure and Applied Mathematics},
  volume={77},
  number={1},
  pages={221--283},
  year={2024},
  publisher={Wiley Online Library}
}

@article{Koopman_Form,
title = {Finite dimensional {Koopman} form of polynomial nonlinear systems},
journal = {IFAC-PapersOnLine},
volume = {56},
number = {2},
pages = {6423-6428},
year = {2023},
note = {22nd IFAC World Congress},
issn = {2405-8963},
doi = {https://doi.org/10.1016/j.ifacol.2023.10.849},
url = {https://www.sciencedirect.com/science/article/pii/S2405896323012296},
author = {Lucian C. Iacob and Maarten Schoukens and Roland Tóth},
}

@article{Spectrum,
       author = {Mezi{\'c}, Igor},
        title = {Spectrum of the {Koopman} operator, spectral expansions in functional spaces, and state-space geometry},
      journal = {Journal of NonLinear Science},
         year = 2020,
        month = oct,
       volume = {30},
       number = {5},
        pages = {2091-2145},
          doi = {10.1007/s00332-019-09598-5},
archivePrefix = {arXiv},
       eprint = {1702.07597},
 primaryClass = {nlin.CD},
       adsurl = {https://ui.adsabs.harvard.edu/abs/2020JNS....30.2091M}
}

@article{koopman1931hamiltonian,
  title={Hamiltonian systems and transformation in {Hilbert} space},
  author={Koopman, Bernard O},
  journal={Proceedings of the National Academy of Sciences},
  volume={17},
  number={5},
  pages={315--318},
  year={1931}
}

@article{koopman1932dynamical,
  title={Dynamical systems of continuous spectra},
  author={Koopman, Bernard O and Neumann, J v},
  journal={Proceedings of the National Academy of Sciences},
  volume={18},
  number={3},
  pages={255--263},
  year={1932}
}

@article{AMSNoticesKoopman,
  title={{Koopman} operator, geometry, and learning of dynamical systems},
  author={Mezi{\'c}, Igor},
  journal={Notices of the American Mathematical Society},
  year         = {2021},
month = {8},
pages = {1087-1105},
  volume       = {68},
  number       = {7},

}

@article{KordaMezic2018,
  title={On convergence of extended dynamic mode decomposition to the {Koopman} operator},
  author={Milan Korda and Igor Mezi{\'c}},
  journal={Journal of Nonlinear Science},
  year={2018},
  volume={28},
  number={2},
  pages={687--710},
  doi={10.1007/s00332-017-9423-0}
}

@article{Mauroy2018,
  title={Global computation of phase–amplitude reduction for limit-cycle dynamics},
  author={Mauroy, Alexandre and Mezi{\'c}, Igor},
  journal={Chaos},
  year={2018},
  volume={28},
  number={7},
  pages={073108},
  doi={10.1063/1.5030175}
}

@article{RFB-EDMD,
  author    = {Haseli, Masih and Cort{\'e}s, Jorge},
  title     = {Recursive forward-backward {EDMD}: Guaranteed algebraic search for {Koopman} invariant subspaces},
  journal   = {IEEE Access},
  year      = {2025},
  volume    = {13},
  pages     = {61 006--61 025},
}

@article{haseli2023generalizing,
  title={Generalizing dynamic mode decomposition: Balancing accuracy and expressiveness in {Koopman} approximations},
  author={Haseli, Masih and Cort{\'e}s, Jorge},
  journal={Automatica},
  volume={153},
  pages={111001},
  year={2023},
  publisher={Elsevier}
}

@article{haseli2021learning,
  title={Learning {Koopman} eigenfunctions and invariant subspaces from data: Symmetric subspace decomposition},
  author={Haseli, Masih and Cort{\'e}s, Jorge},
  journal={IEEE Transactions on Automatic Control},
  volume={67},
  number={7},
  pages={3442--3457},
  year={2022},
  publisher={IEEE}
}

@article{colbrook2023mpedmd,
  title     = {The {mpEDMD} algorithm for data-driven computations of
               measure-preserving dynamical systems},
  author    = {Colbrook, Matthew J.},
  journal   = {SIAM Journal on Numerical Analysis},
  volume    = {61},
  number    = {3},
  pages     = {1585--1608},
  year      = {2023},
  publisher = {SIAM},
  doi       = {10.1137/22M1521407}
}

@article{colbrook2025rigged,
  title     = {Rigged dynamic mode decomposition: Data-driven generalized
               eigenfunction decompositions for {Koopman} operators},
  author    = {Colbrook, Matthew J. and Drysdale, Catherine and Horning, Andrew},
  journal   = {SIAM Journal on Applied Dynamical Systems},
  volume    = {24},
  number    = {2},
  pages     = {1150--1190},
  year      = {2025},
  publisher = {SIAM},
  doi       = {10.1137/24M1662370}
}

@article{colbrook2023resdmd_fluid,
  title     = {Residual dynamic mode decomposition: Robust and verified
               {Koopman}ism},
  author    = {Colbrook, Matthew J. and Ayton, Lorna J. and Sz\H{o}ke, Mih\'aly},
  journal   = {Journal of Fluid Mechanics},
  volume    = {955},
  pages     = {A21},
  year      = {2023},
  publisher = {Cambridge University Press},
  doi       = {10.1017/jfm.2022.1052}
}

@misc{mauroy2024analytic,
  title     = {Analytic extended dynamic mode decomposition},
  author    = {Mauroy, Alexandre and Mezi{\'c}, Igor},
  year      = {2024},
  note      = {Preprint, arXiv:2405.15945},
  eprint    = {2405.15945},
  archiveprefix = {arXiv},
  primaryclass  = {math.DS}
}

@article{philipp2025kernel_edmd,
  title     = {Error analysis of kernel {EDMD} for prediction and control in the {Koopman} framework},
  author    = {Philipp, Friedrich M. and Schaller, Manuel and Worthmann, Karl and Peitz, Sebastian and N{\"u}ske, Feliks},
  journal   = {Journal of Nonlinear Science},
  volume    = {35},
  pages     = {92},
  year      = {2025},
  publisher = {Springer},
  doi       = {10.1007/s00332-025-10182-3}
}

@article{kohne2025kernel_linf,
  title     = {{$L^{\infty}$}-error bounds for approximations of the {Koopman} operator by kernel extended dynamic mode decomposition},
  author    = {K{\"o}hne, Frederik and Philipp, Friedrich M. and Schaller, Manuel and Schiela, Anton and Worthmann, Karl},
  journal   = {SIAM Journal on Applied Dynamical Systems},
  volume    = {24},
  number    = {1},
  pages     = {501--529},
  year      = {2025},
  publisher = {SIAM},
  doi       = {10.1137/24M1650120}
}

@incollection{colbrook2024multiverse,
  title     = {The multiverse of dynamic mode decomposition algorithms},
  author    = {Colbrook, Matthew J.},
  booktitle = {Handbook of Numerical Analysis},
  volume    = {25},
  pages     = {127--230},
  year      = {2024},
  publisher = {Elsevier},
  doi       = {10.1016/bs.hna.2024.05.004}
}

@article{Mezic2005,
  author    = {Mezi{\'c}, Igor},
  title     = {Spectral properties of dynamical systems, model
               reduction and decompositions},
  journal   = {Nonlinear Dynamics},
  volume    = {41},
  number    = {1--3},
  pages     = {309--325},
  year      = {2005},
  doi       = {10.1007/s11071-005-2824-x},
}

@article{Mezic2013,
  author  = {Mezi{\'c}, Igor},
  title   = {Analysis of fluid flows via spectral properties of the {Koopman} operator},
  journal = {Annual Review of Fluid Mechanics},
  volume  = {45},
  number  = {1},
  pages   = {357--378},
  year    = {2013},
  doi     = {10.1146/annurev-fluid-011212-140652},
}

@article{KernelEDMD,
  author    = {Williams, Matthew O. and Rowley, Clarence W.
               and Kevrekidis, Ioannis G.},
  title     = {A kernel-based method for data-driven {Koopman}
               spectral analysis},
  journal   = {Journal of Computational Dynamics},
  volume    = {2},
  number    = {2},
  pages     = {247--265},
  year      = {2015},
  doi       = {10.3934/jcd.2015005},
}

@article{OttoRowley2019,
  author    = {Otto, Samuel E. and Rowley, Clarence W.},
  title     = {Linearly-recurrent autoencoder networks for learning dynamics},
  journal   = {SIAM Journal on Applied Dynamical Systems},
  volume    = {18},
  number    = {1},
  pages     = {558--593},
  year      = {2019},
  doi       = {10.1137/18M1177846},
}

@article{OttoMacchioRowley2023,
  author    = {Otto, Samuel E. and Macchio, Gregory R. and Rowley, Clarence W.},
  title     = {Learning nonlinear projections for reduced-order modeling of dynamical systems using constrained autoencoders},
  journal   = {Chaos: An Interdisciplinary Journal of Nonlinear Science},
  volume    = {33},
  number    = {11},
  pages     = {113130},
  year      = {2023},
  doi       = {10.1063/5.0169688},
}

@article{DeepKoopman,
  author    = {Lusch, Bethany and Kutz, J. Nathan
               and Brunton, Steven L.},
  title     = {Deep learning for universal linear embeddings
               of nonlinear dynamics},
  journal   = {Nature Communications},
  volume    = {9},
  number    = {1},
  pages     = {4950},
  year      = {2018},
  doi       = {10.1038/s41467-018-07210-0},
}

@article{conradie2026trustworthy,
  title={Trustworthy {Koopman} Operator Learning: Invariance Diagnostics and Error Bounds},
  author={Conradie, Gustav and Boull{\'e}, Nicolas and Loiseau, Jean-Christophe
          and Brunton, Steven L. and Colbrook, Matthew J.},
  journal={arXiv preprint arXiv:2603.15091},
  year={2026}
}

@article{korda2020data,
  title={Data-driven spectral analysis of the {Koopman} operator},
  author={Korda, Milan and Putinar, Mihai and Mezi{\'c}, Igor},
  journal={Applied and Computational Harmonic Analysis},
  volume={48},
  number={2},
  pages={599--629},
  year={2020}
}

@article{das2019delay,
  title={Delay-coordinate maps and the spectra of {Koopman} operators},
  author={Das, Suddhasattwa and Giannakis, Dimitrios},
  journal={Journal of Statistical Physics},
  volume={175},
  number={6},
  pages={1107--1145},
  year={2019}
}

@article{Mezic2022Numerical,
  author  = {Mezi{\'c}, Igor},
  title   = {On Numerical Approximations of the {Koopman} Operator},
  journal = {Mathematics},
  volume  = {10},
  number  = {7},
  pages   = {1180},
  year    = {2022}
}

@article{drmac2024koopmanschur,
  title={A data driven {Koopman}--{Schur} decomposition for computational analysis of nonlinear dynamics},
  author={Drma{\v c}, Zlatko and Mezi{\'c}, Igor},
  journal={arXiv preprint arXiv:2312.15837},
  year={2024}
}

\end{document}